\documentclass[suppldata]{interact}

\usepackage{epstopdf}
\usepackage[caption=false]{subfig}
\usepackage{hyperref}
\usepackage[dvipsnames]{xcolor}
\usepackage{stackengine}
\usepackage{amsmath}   
\usepackage{amssymb}
\usepackage{bbm}
\usepackage{upgreek}
\usepackage{comment}
\usepackage{algorithm}
\usepackage{algpseudocode}
\usepackage{mathtools}

\usepackage{pifont}
%
%


\usepackage{scalerel}

\usepackage{multirow}

\usepackage[numbers,sort&compress]{natbib}
\bibpunct[, ]{[}{]}{,}{n}{,}{,}

\theoremstyle{plain}
\newtheorem{theorem}{Theorem}[section]
\newtheorem{lemma}[theorem]{Lemma}
\newtheorem{corollary}[theorem]{Corollary}
\newtheorem{proposition}[theorem]{Proposition}

\theoremstyle{definition}
\newtheorem{definition}[theorem]{Definition}
\newtheorem{example}[theorem]{Example}

\theoremstyle{remark}
\newtheorem{remark}{Remark}
\newtheorem{assumption}{Assumption}

\usepackage{cleveref}

\crefformat{section}{\S#2#1#3} 
\crefformat{subsection}{\S#2#1#3}
\crefformat{subsubsection}{\S#2#1#3}

\usepackage{tikz}

\definecolor{lime}{HTML}{A6CE39}
\DeclareRobustCommand{\orcidicon}{%
	\begin{tikzpicture}
	\draw[lime, fill=lime] (0,0) 
	circle [radius=0.16] 
	node[white] {{\fontfamily{qag}\selectfont \tiny ID}};
	\draw[white, fill=white] (-0.0625,0.095) 
	circle [radius=0.007];
	\end{tikzpicture}
	\hspace{-2mm}
}

\foreach \x in {A, ..., Z}{%
	\expandafter\xdef\csname orcid\x\endcsname{\noexpand\href{https://orcid.org/\csname orcidauthor\x\endcsname}{\noexpand\orcidicon}}
}


\usepackage{multirow}

\usepackage[numbers,sort&compress]{natbib}
\bibpunct[, ]{[}{]}{,}{n}{,}{,}

\begin{document}


\title{Global and local approaches for the minimization of a sum of pointwise minima of convex functions}

\author{
\name{Guillaume Van Dessel\textsuperscript{a}\orcidA{}\thanks{CONTACT Guillaume Van Dessel. Email: guillaume.vandessel@uclouvain.be} and Fran\c{c}ois Glineur\textsuperscript{a,b}\orcidB{}}
\affil{\textsuperscript{a}UCLouvain, ICTEAM (INMA), 4 Avenue Georges Lema\^{\i}tre, Louvain-la-Neuve, BE; \\\textsuperscript{b}UCLouvain, CORE, 34
Voie du Roman Pays, Louvain-la-Neuve, BE}
}

\maketitle

\begin{abstract}

\noindent Numerous machine learning and industrial problems can be modeled as the minimization of a sum of $N$ so-called \emph{clipped} (or \emph{truncated}) convex functions (SCC), i.e. each term of the sum stems as the pointwise minimum between a constant and a convex function. In this work, we extend this framework to capture more problems of interest. Specifically, we allow each term of the sum to be a pointwise minimum of an arbitrary number of convex functions, called \emph{components}, turning the objective into a sum of pointwise minima of convex functions (SMC). \\ \vspace{-5pt}\\
\textbf{Local}. As emphasized in dedicated works, problem (SCC) is already NP-hard, highlighting an appeal for scalable local heuristics. In this spirit, one can express (SMC) objectives as the difference between two convex functions to leverage the possibility to apply (DC) algorithms to compute critical points of the problem. Our approach does not rely on the above (DC) decomposition but rather on a bi-convex reformulation of the problem. From there, we derive a family of local methods, dubbed as relaxed alternating minimization (\texttt{r-AM}) methods, that include classical alternating minimization (\texttt{AM}) as a special case. We prove that every accumulation point of \texttt{r-AM} is critical. In addition, we show the empirical superiority of \texttt{r-AM}, compared to traditional \texttt{AM} and (DC) approaches, on \emph{piecewise-linear regression} and \emph{restricted facility location} problems. \\ \vspace{-5pt}\\
\noindent \textbf{Global}. Under mild assumptions, (SCC) can be cast as a mixed-integer convex program (MICP) using perspective functions. This approach can be generalized to (SMC) but introduces many copies of the primal variable. In contrast, we suggest a compact \emph{big-M} based (MICP) equivalent formulation of (SMC),  free of these extra variables. Finally, we showcase practical examples where solving our (MICP), restricted to a neighbourhood of a given candidate (i.e. output iterate of a local method), will either certify the candidate's optimality on that neighbourhood or providing a new point, strictly better, to restart the local method.

\end{abstract}

\begin{keywords}
clipped convex,  truncated convex, mixed-integer convex programming, nonconvex optimization, piecewise-linear regression, alternating minimization
\end{keywords}
\vspace{-20pt}

\section{Introduction}
\label{intro_notes}
In this paper, we are concerned with (constrained) optimization problems of the form
\begin{equation}
    F^* = \min_{x \in \mathcal{X}}\hspace{3pt}\Bigg\{F(x):= \bar{h}(x) + \frac{1}{N}\,\sum_{s=1}^{N}\,\underbrace{\min_{l=1,\dots,n_s}\,h_{l}^{(s)}(x)}_{h^{(s)}(x)}\Bigg\} 
\label{eq:min_problem}
\tag{SMC}
\end{equation}

\noindent where $N \in \mathbb{N}$ and $\mathcal{X} \subseteq \mathbb{R}^d$. For every index\footnote{For any natural number $n \in \mathbb{N}$, we use the shorthand $[n]$ to depict the set $\{1,\dots,n\}$.} $s \in [N]$, we consider $n_s \in \mathbb{N}$ continuous functions $h^{(s)}_1,\dots,h^{(s)}_{n_s}$ called \emph{component functions}. Together with $\bar{h}$ lower semicontinuous, dubbed as the \emph{main function}, they are assumed to be closed, proper and convex. Throughout the sequel, we further assume that problem \eqref{eq:min_problem} is \emph{feasible} (i.e. $\mathcal{X} \cap \text{dom}\,F = \mathcal{X} \cap \text{dom}\,\bar{h} \not=\emptyset$ hence $F^*<\infty$) and \emph{bounded} (i.e. $F^*>-\infty$).
\\

\noindent Problems of the form \eqref{eq:min_problem} frequently arise in applications such as clustering \cite{Piccialli22}, minimization of truncated convex functions \cite{LiuJian19} (e.g. regression with capped polyhedral norm penalty \cite{Ong13} and regression robust to outliers \cite{Baratt20}). More generally, it encompasses the wide class of Piecewise-Linear-Quadratic optimization programs \cite{Cui20}. 

\paragraph*{Goals} At this stage, we already point out the intrinsic combinatorial nature of \eqref{eq:min_problem}. As thoroughly explained later on in the paragraph dedicated to its geometry, the problem at hand is nonconvex and nonsmooth. Yet, in theory, there exists a deterministic algorithm that solves it globally. Let $\sigma =(\sigma_1,\dots,\sigma_N) \in \bigtimes_{s=1}^{N}\,[n_s]$, we define,
\begin{equation}
\nu(\sigma) := \min_{x \,\in\,\mathcal{X}}\,\Bigg\{F_\sigma(x) := \bar{h}(x) + \frac{1}{N}\,\sum_{s=1}^N\,h^{(s)}_{\sigma_s}(x)\Bigg\} .
\label{eq:F_sigma_sel}
\tag{Oracle}
\end{equation}
The convexity of $F_\sigma$ ensures the convexity of the (sub)problems mobilized in \eqref{eq:F_sigma_sel}.\\We consider that $\nu(\sigma)$ is computed \emph{exactly}, setting numerical accuracy concerns aside. By feeding \eqref{eq:F_sigma_sel} every possible $\sigma$-selection, one is guaranteed to solve \eqref{eq:min_problem} as 
\begin{equation}F^* = \min_{\sigma \,\in \,\bigtimes_{s=1}^N\,[n_s]}\,\nu(\sigma)=\min_{\sigma \,\in \,\bigtimes_{s=1}^N\,[n_s]}\,\min_{x\,\in\,\mathcal{X}}\,F_\sigma(x) = \min_{x\,\in\,\mathcal{X}}\,\min_{\sigma \,\in \,\bigtimes_{s=1}^N\,[n_s]}\,F_\sigma(x). \label{eq:equiv_set} \end{equation}
    \noindent Obviously, this enumerative approach quickly becomes impractical since \begin{equation} n:=\textbf{card}\bigg(\bigtimes_{s=1}^{N}\,[n_s]\bigg) = \Pi_{s=1}^{N}\,n_s \in \mathcal{O}\bigg(\Big(\max_{s=1,\dots,N}\,n_s\Big)^N\bigg) \label{eq:n_scale} \end{equation} grows exponentially with $N$, e.g. $\bar{n}^N$ if $n_s=\bar{n}$ for every $s\in [N]$. Unfortunately, our motivation essentially comes from problems exhibiting a large $N$, e.g. $N \sim 10^2\to 10^4$. In the absence of simplifications, there always exists a problem instance that requires a \emph{scan} of the $n$ possible \emph{pieces} $F_\sigma$ when it comes to global optimization, as we clarify in the paragraph about \eqref{eq:min_problem}'s \textbf{difficulty}. In other words, the naive enumeration is optimal in the worst case. Actually, \cite{Baratt20} points out that finding $F^*$ is at least as hard as the (NP-hard) subset sum problem (SSP), since (SSP) is a particular instance of \eqref{eq:min_problem}. Global optimality out of reach, we are mainly driven by empirical results.\\

\noindent Therefore, our primary goal is to develop heuristic methods, yet covered by sound convergence guarantees towards \emph{critical} points of \eqref{eq:min_problem}, making use of \eqref{eq:F_sigma_sel} to obtain the best solutions possible for a prescribed amount of computational time.\\ For most of the works in nonconvex nonsmooth optimization, \emph{criticality} represents the standard satisfaction requirement. Nevertheless, with all the structure at hand regarding the objective function, one can hope for better. As a secondary goal, we also aim at filtering out mere \emph{critical} points, ultimately striving for \emph{local-minima}. 
\paragraph*{Data driven}
Aside from purely academic toy examples, \eqref{eq:min_problem} problems are typically data driven. Starting from a collection of $N \in \mathbb{N}$ elements from $\mathbb{R}^p$, i.e. $\{\beta^{(s)}\}_{s=1}^{N}$, one tries to minimize a (regularized) average loss over the dataset, i.e. \begin{equation} \min_{x\,\in\,\mathcal{X}}\,\bar{h}(x) + \frac{1}{N}\,\sum_{s=1}^{N}\,h(x\,|\,\beta^{(s)}) \label{eq:SMC_data} \tag{data-SMC} \end{equation} where $h(\cdot\,|\,\beta)$ is the pointwise minimum among $\bar{n}\in \mathbb{N}$ fixed proper convex individual losses $\{h_l(\cdot\,|\,\beta)\}_{l=1}^{\bar{n}}$ that solely depend on $\beta \in \mathbb{R}^p$, i.e. \begin{equation}h(\cdot\,|\,\beta) := \min_{l\, \in \,[\bar{n}]}\,h_l(\cdot\,|\,\beta). \label{eq:pos_choice}\end{equation} In other words, about \eqref{eq:SMC_data}, $n_s = \bar{n}$ for every $s \in [N]$. Moreover, a term $h(\cdot\,|\,\beta^{(s)})$ matches with $h^{(s)}$ whereas for every $l \in [\bar{n}]$, $h_l(\cdot\,|\,\beta^{(s)})$ corresponds to the \emph{component} $h_{l}^{(s)}$. Two main types of situations, both of which we illustrate by a meaningful example, lead to $h(\cdot\,|\,\beta)$ as described in \eqref{eq:pos_choice}.\\ \vspace{-3pt}
\begin{enumerate}
    \item[(I)] \textbf{Multiple choices}.\\
        \begin{itemize} 
    \item In multifacility \cite{Hamacher09} problems, one needs to decide where to install $\bar{n}\in \mathbb{N}$ new \emph{facilities} $x_l \in \mathbb{R}^2$ for every $l \in [\bar{n}]$, taking into account territory constraints, i.e. 
   $x_l \in \mathcal{X}_l$ for every $l \in [\bar{n}]$. The overall decision variable is $x =(x_1,\dots,x_{\bar{n}})$ and $\mathcal{X} = \mathcal{P} \,\cap\,\bigtimes_{l=1}^{\bar{n}}\,\mathcal{X}_l$ where $\mathcal{P} \subseteq \mathbb{R}^{2\cdot\bar{n}}$ implements binding constraints for \emph{facilities}' locations. Here, $\beta \in \mathbb{R}^2$ ($p=2$) represents a customer's position. This customer incurs a proximity loss regarding the $l$-th \emph{facility} set by 
        \begin{equation} h_l(x\,|\,\beta) = ||x_l-\beta||.
            \label{eq:multiple_choices_1}
    \end{equation}
    where $||\cdot||$ depicts a metric over $\mathbb{R}^2$. Hence, his loss becomes the minimum over all the installed \emph{facilities}, i.e. the loss induced by the closest one, 
    $$ h(x\,|\,\beta) = \min_{l\,\in\,[\bar{n}]}\,||x_l-\beta||.$$
    \end{itemize}
\vspace{-8pt}
     \item[(II)] \textbf{Range cuts}.  \\
     \begin{itemize}
         \item In SVM problems, each element $\beta = (\gamma,\bar{\beta})$ includes a class $\gamma \in \{-1,1\}$ and features $\bar{\beta} \in \mathbb{R}^{p-1}$ ($p>1$ here). The goal is to find a vector $x \in \mathbb{R}^{p-1}$ ($d=p-1$) that correlates positively with $\bar{\beta}$ when $\gamma=1$ and negatively otherwise. Hence, a sound loss can be $\ell_\beta(x) = \max\{0,1-\gamma \,\langle \bar{\beta},x\rangle\}$ with lower values of $\ell_\beta$ indicating a good classification of the data point $\beta$. However, in the presence of outliers, the decision vector $x$ can be highly influenced by non-representative data. Therefore, in order to tame the impact of outliers, it is common to \emph{truncate} \cite{LiuJian19} or \emph{clip} \cite{Baratt20} the loss $\ell_{\beta}$ when it exceeds a fixed threshold quantity $\lambda>0$. With $\bar{n} = 2$, $\mathcal{X} = \mathbb{R}^{p-1}$, it comes \,
         $$ h(x\,|\,\beta) = \min\{\max\{0,1-\gamma \,\langle \bar{\beta},x\rangle\},\lambda\},$$
        \begin{equation} h_1(x\,|\,\beta) = \max\{0,1-\gamma \,\langle \bar{\beta},x\rangle\},\quad h_2(x\,|\,\beta) = \lambda.\label{eq:range_cut_1} \end{equation}
    \end{itemize}
 \end{enumerate}
\begin{remark}
Whether regarding \eqref{eq:SMC_data} or the more general \eqref{eq:min_problem}, the \emph{main function} $\bar{h}$ primarily serves as a regularizing term promoting some desired structure about the decision variable $x$. Also, as it is the case for our motivational Example \ref{DC_fitting}, it can capture the common convex part to all the \emph{components}. Indeed, considering a posed \eqref{eq:min_problem} problem. If for every $s \in [N]$ and $l \in [n_s]$, $h^{(s)}_l = \bar{h}^{(s)} + \Delta h^{(s)}_l$  with both $\bar{h}^{(s)}$ and $\Delta h^{(s)}_l$ proper convex then \begin{equation} F^* = \min_{x\,\in\,\mathcal{X}}\,\underbrace{\bar{h}(x)+\frac{1}{N}\,\sum_{s=1}^{N}\,\bar{h}^{(s)}(x)}_{\text{new \emph{main} term}} + \frac{1}{N}\,\sum_{s=1}^{N}\,\min_{l\,\in\,[n_s]}\,\underbrace{\Delta h^{(s)}_l(x)}_{\text{new \emph{components}}}. \label{eq:SMC_shift} \end{equation}
Formulation \eqref{eq:SMC_shift} is preferred to \eqref{eq:min_problem} since it only retains the discriminative part of the \emph{components}. Thereby, it becomes more obvious where on $\mathcal{X}$ a \emph{component}'s value $h^{(s)}_{l_+}(x)$ is smaller than its alternatives, i.e. $h^{(s)}_l(x)$ for $l \in [n_s]\backslash \{l_+\}$.
\end{remark}

\paragraph*{Difficulty}

    By default, problem \eqref{eq:min_problem} is nonconvex. However, consciously or not, sometimes it degenerates into a convex program. It is trivially convex if $n_s=1$ for every term $s \in [N]$. More subtly, it will also be convex if there exists a selection $\sigma \in \bigtimes_{s=1}^{N}\,[n_s]$ such that $F(x) = F_\sigma(x)$ for every $x \in \mathcal{X}$, see Equation \eqref{eq:equiv_set}. Otherwise, the objective $F$, restricted to $\mathcal{X}$, must be expressed as the pointwise minimum of a number $\hat{n}>1$ (and $\hat{n}\leq n$) of convex functions, turning \eqref{eq:min_problem} into a nonconvex program for sure. We note that in some (very) favorable settings with $d\leq 2$, it is shown in \cite{LiuJian19} that one can cope globally with problem \eqref{eq:min_problem} in polynomial-time by solving $\mathcal{O}(N^d)$ convex subproblems, i.e. \eqref{eq:F_sigma_sel}, even though $n_s=2$ for every $s\in [N]$. However, as suggested, their methodology is not applicable for any convex \emph{components} and especially not suited for $d>2$. Even if it were applicable more broadly, we highlight the exponential complexity in the dimension $d$, corroborating the NP-hardness result stated previously in the paragraph about our \textbf{goals}. Actually, \eqref{eq:min_problem} objectives can exhibit an exponential number of \emph{local minima}. To prove this, we propose a family of worst-case instances (Proposition \ref{prop:fa}). They embrace the full complexity of our problem in the sense that for any selection $\sigma \in \bigtimes_{s=1}^{N}\,[n_s]$, there exists a non-empty region $\mathcal{R}$ of $\mathbb{R}^d$ over which $\sigma$ is the unique selection leading to $F(x) = F_\sigma(x)$ for every $x \in \mathcal{R}$. We call these objectives \emph{fully-active} since every piece $F_\sigma$ is useful (i.e. becomes \emph{active} somewhere) in $F$'s enumerative description \eqref{eq:equiv_set}. 
    \begin{proposition}[Fully-active \eqref{eq:min_problem} instance] \label{prop:fa}
    Let $N \in \mathbb{N}$ be fixed and the number of components $n_s\in \mathbb{N}$ be chosen for every $s \in[N]$. For any $d \geq N$, let $\bar{h} : \mathbb{R}^d \to \mathbb{R}$ be a proper convex function. The function $\mathcal{W}$ defined for every $x \in \mathbb{R}^d$ by 
    \begin{equation}
    \mathcal{W}(x) = \bar{h}(x) + \frac{1}{N}\, \sum_{s=1}^{N}\, \min_{l\, \in\, [n_s]}\,\Big\{h^{(s)}_l(x) := l\cdot x_s+\frac{l\cdot (l-1)}{2}\Big\}
    \label{eq:resisting_oracle}
    \end{equation}
    is fully-active. I.e., for every $\sigma \in \bigtimes_{s=1}^{N}\, [n_s]$, $\sigma$ is the unique selection leading to 
    $$\mathcal{W}(x) = \mathcal{W}_{\sigma}(x) := \bar{h}(x) + \frac{1}{N}\,\sum_{s=1}^{N}\,\bigg(\sigma_s\cdot x_s + \frac{\sigma_s\cdot(\sigma_s-1)}{2}\bigg) \quad \forall x \in \bigtimes_{s=1}^{N}\,]-\sigma_s,-\sigma_s+1[.$$
    
    \end{proposition}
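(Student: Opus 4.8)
The plan is to exploit the fact that the objective separates across the first $N$ coordinates and that $\bar h$ plays no role in which selection is active. Indeed, the $s$-th inner minimum $\min_{l\in[n_s]} h_l^{(s)}(x)$ depends only on the single coordinate $x_s$, and $\bar h$ is common to every $\mathcal{W}_\sigma$; hence the identity $\mathcal{W}(x)=\mathcal{W}_\sigma(x)$ holds at a point $x$ if and only if, for each coordinate $s\in[N]$ separately, the index $\sigma_s$ attains the univariate minimum $\min_{l\in[n_s]}\{l\,x_s+\tfrac{l(l-1)}{2}\}$. This reduces the whole claim to a one-dimensional statement about the affine family $\phi_l(t):=l\,t+\tfrac{l(l-1)}{2}$, $l\in[n_s]$, which I would then analyze for each $s$ independently.

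The key computation is the pairwise difference of consecutive members of this family. A direct calculation gives $\phi_{l+1}(t)-\phi_l(t)=t+l$, so $\phi_{l+1}$ undercuts $\phi_l$ exactly on $\{t<-l\}$ and the two cross at the integer breakpoint $t=-l$. Because the slopes $l$ are strictly increasing, the lower envelope $t\mapsto \min_{l}\phi_l(t)$ is concave piecewise-linear, its pieces ordered by decreasing slope as $t$ increases, with breakpoints precisely at $-1,-2,\dots,-(n_s-1)$; consequently, on the open unit interval $]-l,-l+1[$ the piece $\phi_l$ is the one achieving the minimum. What remains for rigor is to upgrade ``active'' to ``unique strict minimizer against all other indices'', not merely the two neighbours: I would verify, from the closed form, that $\phi_l(t)-\phi_{l\pm k}(t)$ keeps the correct sign throughout $]-l,-l+1[$ for every admissible $k\ge 1$. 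Summing the telescoped consecutive increments $\phi_{m+1}-\phi_m=t+m$ over the relevant range of $m$ makes both one-sided comparisons transparent, and this bookkeeping over all $k$ is the only place demanding care.

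With the univariate fact in hand, the assembly is immediate. Fix $\sigma\in\bigtimes_{s=1}^{N}[n_s]$ and take any $x$ in the box $\bigtimes_{s=1}^N\,]-\sigma_s,-\sigma_s+1[$. Coordinate by coordinate, $\sigma_s$ is the unique minimizing index, so $\min_{l}h_l^{(s)}(x)=h_{\sigma_s}^{(s)}(x)$, and summing yields $\mathcal{W}(x)=\mathcal{W}_\sigma(x)$ on the entire box. For uniqueness, any competing selection $\sigma'\neq\sigma$ differs from $\sigma$ in some coordinate $s_0$; by the strictness just established, $h_{\sigma'_{s_0}}^{(s_0)}(x)>h_{\sigma_{s_0}}^{(s_0)}(x)$ there, whence $\mathcal{W}_{\sigma'}(x)>\mathcal{W}(x)$ on the box and $\sigma'$ cannot reproduce $\mathcal{W}$. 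Finally, each factor $]-\sigma_s,-\sigma_s+1[$ is a nonempty open interval of length one, so the region is nonempty, and the hypothesis $d\ge N$ guarantees that $N$ distinct coordinates are available to carry this construction (any further coordinates and the term $\bar h$ are simply inert to the selection). I expect the only genuine obstacle to be the all-indices strictness check of the second paragraph; everything else follows from separability together with the engineered quadratic offset $\tfrac{l(l-1)}{2}$, which is exactly what pins the crossings to consecutive integers and makes each active interval have unit length.
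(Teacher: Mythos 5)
Your proposal is correct and follows essentially the same route as the paper: reduce to the separable univariate problem in each coordinate $x_s$ and check that $\phi_l(t)=l\,t+\tfrac{l(l-1)}{2}$ is the unique minimizer of the family on $]-l,-l+1[$. The paper simply asserts this last univariate fact (``one can observe that this is the case''), whereas you supply the telescoping computation $\phi_{m+1}-\phi_m=t+m$ that justifies it against all competing indices, which is a welcome level of added rigor but not a different argument.
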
 
    \begin{proof}
   We notice the independence of terms $h^{(s)} = \min_{l\,\in\,[n_s]}\,h^{(s)}_l$ for every $s \in [N]$. At any $x \in \mathbb{R}^d$ and $(s_1,s_2) \in [N]^2$, this implies that an \emph{active} \emph{component} $h^{(s_1)}_{\sigma_{s_1}}$, i.e., $$h^{(s_1)}_{\sigma_{s_1}}(x) \leq \min_{l_1 \in [n_{s_{1}}]}\,h^{(s_1)}_{l_1}(x)$$ has no influence on the possible activation of $h^{(s_2)}_{\sigma_{s_2}}$ regarding the term $h^{(s_2)}$. Indeed, the $s$-th dimension of vector $x$ defines on its own which \emph{component} among $h^{(s)}_{1},\dots,h^{(s)}_{n_{s}}$ will be \emph{active}. Hence, every combination $\sigma \in \bigtimes_{s=1}^{N}\,[n_s]$ can achieve 
   $\mathcal{W}(x) = \mathcal{W}_{\sigma}(x)$, conditional on the fact that each $h^{(s)}_{\sigma_{s}}$ might become \emph{active} for some $x_s \in \mathbb{R}$.\\ One can observe that this is the case, i.e. $h^{(s)}_{\sigma_{s}}$ is \emph{active} for every $x_s \in [-\sigma_s,-\sigma_s+1]$.\\Finally, on $]-\sigma_s,-\sigma_s+1[$, $h^{(s)}_{\sigma_s}$ is the only \emph{active component} for the term $h^{(s)}$.
    \end{proof}
    \noindent To summarize, in what concerns \emph{fully-active} objectives, globally minimizing $F$ using \eqref{eq:F_sigma_sel} suggests that, indeed, one should solve $n$ convex subproblems, possibly leading to as many \emph{local-minima} among which up to $n-1$ are spurious, i.e. non-global.

\paragraph*{Geometry \& Structure} Let us first underline a geometrical consideration particular to problem \eqref{eq:min_problem}, thereby shared by convex problems. We denote for every $s \in [N]$, 
\begin{equation} 
\mathcal{A}^{(s)} : \mathbb{R}^d \rightrightarrows \{1,\dots,n_s\},\hspace{5pt} x\to \mathcal{A}^{(s)}(x) := \big\{l \,|\, h^{(s)}(x) = h_l^{(s)}(x)\big\}. 
\label{eq:active}
\end{equation}
Let $x \in \mathbb{R}^d$, $\mathcal{A}^{(s)}(x)$ collects which \emph{components} $h^{(s)}_{l}$ are \emph{active} at $x$ for the term $h^{(s)}$.
If we assume that the \emph{main function} and \emph{component functions} are twice continuously differentiable on their respective domains, then for every $x \in \text{int}(\text{dom}\,\bar{h})$ fulfilling $$\mathcal{A}^{(s)}(x) = \{\sigma_s\}\quad \forall s \in[N],$$ the matrices $\nabla^2 h^{(s)}(x) = \nabla^2 h_{\sigma_s}(x)$ are well-defined and positive semidefinite so that 
\begin{equation}\nabla^2 F(x)= \nabla^2 \bar{h}(x) + \frac{1}{N}\sum_{s=1}^{N}\nabla^2 h_{\sigma_s}^{(s)}(x) \succeq \mathbf{0}_{d\times d}.
\label{eq:no_curvature}
\end{equation}
There exists no point in the effective domain of $F$ where, when existing, its Hessian matrix admits a strictly negative eigenvalue (no negative curvature). More generally, even when the \emph{components} and/or the \emph{main function} fail to be differentiable on their domains, \eqref{eq:min_problem} objectives are convex on subregions of $\mathcal{X}$ that we define now.\\ Every $\sigma$-selection induces two (possibly nonconvex or empty) regions\vspace{-5pt}\begin{align}
\mathcal{R}(\sigma) &:= \big\{x \in \mathcal{X}\,|\,h_{\sigma_s}^{(s)}(x) \leq h_l^{(s)}(x) \hspace{5pt}\forall s \in [N],\hspace{2pt}\forall l \in [n_s]\big\},
\label{eq:packing}\\
\overset{\circ}{\mathcal{R}}(\sigma) &:= \big\{x \in \mathcal{X}\,|\,h_{\sigma_s}^{(s)}(x) < h_l^{(s)}(x) \hspace{5pt}\forall s \in [N],\hspace{2pt}\forall l \in [n_s]\backslash \{\sigma_s\}\big\}.
\label{eq:packing_open}
\vspace{-15pt}
\end{align}
On $\mathcal{R}(\sigma)$, $F = F_\sigma$ and every \emph{stationary point} $\hat{x} \in \overset{\circ}{\mathcal{R}}(\sigma)$, i.e. $\mathbf{0}_d\in\partial F_\sigma(\hat{x})+\mathcal{N}_{\mathcal{X}}(\hat{x})$, must be a \emph{local-minimum} of $F$ on the subregion $\mathcal{R}(\sigma)$, there is no interior \emph{local-maximum}. 
\newpage \noindent Indeed, by convexity of $F_\sigma$, $\hat{x}$ would then be a global minimum of $F_\sigma$ over any subset $\mathcal{S} \subseteq \mathbb{R}^d$ in which it is contained. Moreover, by continuity of the \emph{components}, there must exist a radius $\hat{\alpha}>0$ for which every $x \in \mathcal{X} \,\cap\,\mathbb{B}_2(\hat{x}\,;\,\hat{\alpha})$ satisfies $h_{\sigma_s}^{(s)}(x) \leq h_l^{(s)}(x)$ for every $s \in [N]$ and $l \in [n_s]$. On $\mathcal{S} = \mathcal{X} \,\cap\,\mathbb{B}_2(\hat{x}\,;\,\hat{\alpha})$, $F=F_\sigma$ is minimized at $\hat{x}$.

\begin{figure}[h]
\vspace{-20pt}
\centering
\includegraphics[width=0.8\textwidth]{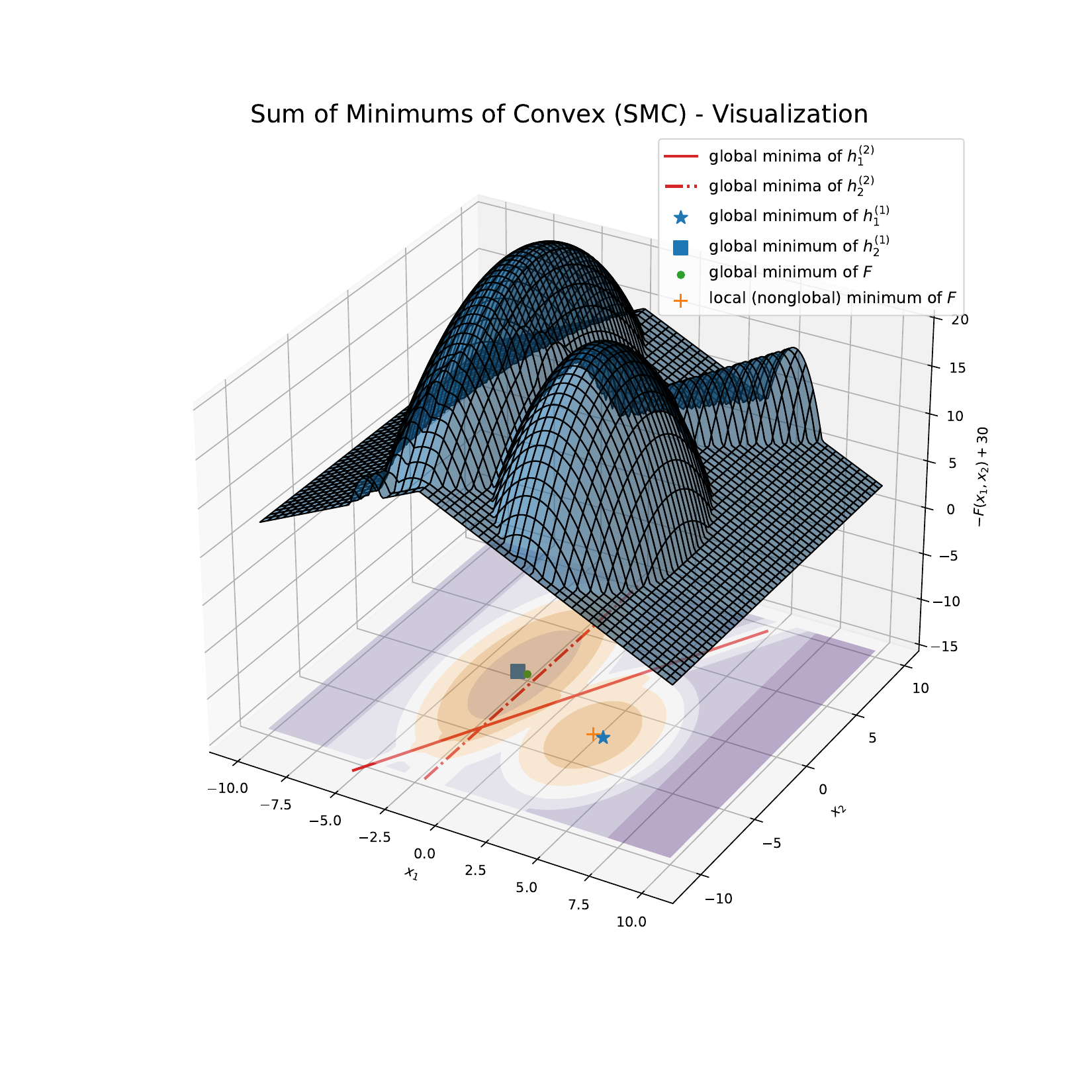}
\vspace{-50pt}
\caption{$F(x_1,x_2) = \min\{(x_1-3)^2+\frac{1}{3}(x_2+3)^2,(x_1+3)^2+\frac{1}{6}x_2^2,15\}+\min\{(x_2-2 x_1 + 1)^2,|x_1+2|\}$}
\label{fig:SMC_intro}
\end{figure}

\remark On Figure \ref{fig:SMC_intro}, we have displayed the graph of $-F$ as well as contour lines for an example with $d=2$, $N=2$ ($n_1=3$, $n_2=2$), $\mathcal{X} = [-10,10]^2$ and $\bar{h}(x)=0$. \\ $F$ is nonconvex on substantial portions of the basic feasible domain $\mathcal{X}$, i.e. there exists $x,x_+ \in \mathcal{X}$ and $q \in [0,1]$ such that $F(q\cdot x+(1-q)\cdot x_+) > q\cdot F(x) + (1-q)\cdot F(x_+)$. However, one can spot a global optimum of $F$ as being the global optimum of a convex function $(1/2)\cdot(h_2^{(1)}+h_2^{(2)})$, i.e. $F^* = \min_{x\,\in\,\mathcal{X}}\,F_{(2,2)}(x)$ and $\mathcal{X}^* = \{-(\frac{5}{2},0)\}$.\\

\noindent To conclude this paragraph, akin to \cite{Bagirov08},we also mention the difference-of-convex (DC) nature of \eqref{eq:min_problem}. Indeed, let $f_\star$ be convex for $\star \in \{1,2\}$. For any $x\in \mathbb{R}^d$,\begin{equation}
F(x) = \underbrace{\bar{h}(x) + \frac{1}{N}\,\sum_{s=1}^{N}\,\sum_{l\,\in\,[n_s]}\,h^{(s)}_l(x)}_{f_1(x)} - \underbrace{\frac{1}{N}\,\max_{\tilde{l}\,\in\,[n_s]}\,\sum_{l \,\in\,[n_s]\backslash\{\tilde{l}\}}\,h^{(s)}_l(x)}_{f_2(x)}.
\label{eq:DC_decomposition}
\end{equation}
This decomposition will help us to define the concept of \emph{critical point} of \eqref{eq:min_problem}. Moreover, \eqref{eq:DC_decomposition} allows the use of (DC) algorithms (see e.g. \cite{LeThi18}) as a possible baseline.
\newpage 
\begin{remark}
As suggested by Equation \eqref{eq:equiv_set}, there must exist a global minimum of $F$ over $\mathcal{X}$ that is the global minimum of a $\sigma^*$-selection over $\mathcal{X}$, i.e. $\sigma^* \in \bigtimes_{s=1}^{N}\,[n_s]$ with 
$$F^* = \min_{x\,\in\,\mathcal{X}}\,\bar{h}(x) + \frac{1}{N}\sum_{s=1}^{N}\,h^{(s)}_{\sigma_s^*}(x).$$ Nevertheless, we emphasize that there might exist global minima of $F$ that are not the result of the minimization of $\sigma$-selection. For instance, for every $x \in \mathbb{R}$ let
\begin{equation}
 F(x) =  -\frac{1}{4} + \frac{1}{2}\Big(\min\{(x-1)^2,1/2\} + \min\{x^2,1/2\}\Big).
 \label{eq:other_glob_min}
\end{equation}$F$ fits \eqref{eq:min_problem} with $\bar{h}(x)=-1/4$, $d=1$, $N=2$ ($n_1=2$, $n_2=2$). By inspection, $F^* = F(1) = F(0) = 0$. Yet, one also has $F(1/2)=0=F^*$ but $1/2$ is neither the minimizer of $h^{(1)}_1(x)=(x-1)^2$ nor $h^{(1)}_2(x)=x^2$. This toy example also highlights that there might be multiple optimal $\sigma^*$-selections, i.e. here $\sigma = 1$ and $\sigma = 2$.
\end{remark}
\paragraph*{Motivation}
Since the brute force enumeration approach \eqref{eq:equiv_set} quickly becomes impractical as $N$ grows ($n=\Pi_{s=1}^N\,n_s$,  recalling Eq. \eqref{eq:n_scale}), one question naturally emerges. 
\begin{center}
    \textit{Can we exploit the structure in \eqref{eq:min_problem} to (i) implement an efficient/scalable local-search heuristic, (ii) possibly endowed with some local optimality guarantees ?} \vspace{10pt}
\end{center}

About $(i)$, we answer by the affirmative. Rewriting \eqref{eq:min_problem} as a specific bi-convex problem, the possibility to apply alternating minimization (\texttt{AM}) procedures is unveiled. One only requires \emph{off-the-shelf} numerical tools, e.g.\@ \texttt{CVXPY} \cite{Diamond16}, to solve convex subproblems, i.e. \eqref{eq:F_sigma_sel}, and the ability to evaluate \emph{component functions} at any produced iterate. Relying on both these oracles, we come up with a family of new local-search methods, which we call relaxed alternating minimization (\texttt{RAM}), empirically providing better solutions than (\texttt{AM}) on our benchmark tests. (\texttt{RAM}), but also (\texttt{AM}) as a limiting case, scale well with $N$ in the sense that we prove their convergence in $O(\delta^{-1})$ iterations, each of which involving one call to both oracles, to reach any level $\delta>0$ of approximate optimality measure. This latter coincides with \emph{criticality} for $\delta=0$. We precise the notion of \emph{criticality} in Definition \ref{def:cp} (Section \ref{sec:prelim}).
\\ 

About $(ii)$, we nuance our answer. We leverage a new lemma tailored for \eqref{eq:min_problem} (Lemma \ref{lemma_lo_smc}), from which we deduce sufficient conditions (Corollary \ref{coro:restart_check}) for a candidate $\hat{x}\in\mathcal{X}$ to be \emph{locally optimal}. We show that it amounts to solving a reduced scale instance of the original \eqref{eq:min_problem} problem on a local neighbourhood of $\hat{x}$. This scale relates to the number $\Pi_{s=1}^{N}\,\big|\mathcal{A}^{(s)}(\hat{x})\big|$ (recall Eq. \eqref{eq:active}), called \emph{degeneracy factor}. \begin{itemize}
\item If the \emph{degeneracy factor} is small enough, one can implement a direct exhaustive scanning procedure to check for \emph{local optimality}. \\
\item Otherwise, we propose to use a mixed-integer formulation for \eqref{eq:min_problem} problems whose continuous relaxations are convex. Hopefully, the existence of dedicated \textit{branch-and-cut} algorithms (see e.g. \cite{Kronqvist18}) might help in practice to solve the aforementioned reduced scale instance, also allowing to check for \emph{local optimality}. \\
\end{itemize}

\noindent That being stated, the reduced scale problem might be as hard to solve as the original one in some unlucky circumstances, e.g. the order of the \emph{degeneracy factor} is $\mathcal{O}(n)$. Without further assumptions on the feasible set $\mathcal{X}$ and objective $F$, guaranteeing \emph{local optimality} remains difficult from a theoretical point of view. \\ \\
In order for the reader to get further insight, we present hereafter our most exciting example, instance of \eqref{eq:SMC_data}, closely related to \textbf{multiple choice} type.

\example (\emph{Piecewise-Linear $L_1$-Regression}, \cite{Bagirov22}) \label{DC_fitting} Due to their appealing universal approximator property \cite{Relu20}, piecewise-linear models provide an excellent trade-off between complexity and performance. Considering a $L_1$-regression task, given a dataset of training examples from $\mathbb{R}^{p+1}$, i.e. $\{(\gamma^{(s)},\bar{\beta}^{(s)})\}_{s=1}^{N}$, one would like build a piece-wise linear model $\Delta \ell(\cdot\,;\,x)$ that predicts the most accurately possible $\gamma^{(s)} \in \mathbb{R}$ as $\Delta \ell(\bar{\beta}^{(s)}\,;\,x)$ by minimizing its \emph{mean absolute deviation} regarding the examples, i.e. \begin{equation}
\min_{x \,\in \,\mathcal{X}}\,\frac{1}{N}\,\sum_{s=1}^{N}\,\big|\gamma^{(s)} - \Delta \ell(\bar{\beta}^{(s)}\,; \,x)\big|.
\label{eq:raw_DC_fitting}
\end{equation}
The prediction of the piecewise-linear model $\Delta \ell(\cdot\,;\,x)$ is defined for every $\bar{\beta} \in \mathbb{R}^p$ as
\begin{equation}
\Delta \ell(\bar{\beta} \, ;\, x) = \underbrace{\max_{e_1\,\in\,[B_1]}\,\big\{\langle \bar{\beta}, x_{e_1}^{(1)} \rangle\big\}}_{\ell_1(\bar{\beta}\,;\,x)} - \underbrace{\max_{e_2\,\in\,[B_2]}\,\big\{\langle \bar{\beta}, x_{e_2}^{(2)} \rangle\big\}}_{\ell_2(\bar{\beta}\,;\,x)}
\label{eq:DC_fit_model}
\end{equation}
The model is parametrized by a vector of coefficients $x$ of size $p\cdot(B_1+B_2)$, i.e. $$x = \big(x_1^{(1)},\dots,x_{B_1}^{(1)},x_1^{(2)},\dots,x_{B_2}^{(2)}\big).$$
Usually, one sets $\mathcal{X} = \big\{u\in\mathbb{R}^p\,|\,||u||_{\infty} \leq L\big\}^{B_1 + B_2}$ to ensure the $2L$ Lipschitz continuity of $\Delta \ell(\cdot\,;\,x)$. As we prove in Appendix \ref{dev_SMC}, problem \eqref{eq:raw_DC_fitting} fits in \eqref{eq:SMC_data} with 
  \begin{align}
  \bar{h}(x) = &\, \frac{1}{N}\,\sum_{s=1}^N 
 \max\Big\{\gamma^{(s)} + \ell_2(\bar{\beta}^{(s)}\,;\,x),\ell_1(\bar{\beta}^{(s)}\,;\,x)\Big\} \nonumber \\&\,+ \max\Big\{-\gamma^{(s)} + \ell_1(\bar{\beta}^{(s)}\,;\,x),\ell_2(\bar{\beta}^{(s)}\,;\,x)\Big\}, \nonumber \\
h_l(x\,|\,\bar{\beta}) =& -\big\langle \bar{\beta}, x^{(1)}_{e_1(l)}+x^{(2)}_{e_2(l)}\big\rangle,  \nonumber
\end{align}
as well as $\bar{n} = B_1 \cdot B_2$, $e_1(l)  =  \lceil l/B_2\rceil$ and finally $e_2(l)  =  1+\big(l-1) \,\textbf{mod}\,B_2$.

\subsection{Related work}
The present framework entails many optimization problems of interest. Among them, the \textbf{multiple choices} type of \eqref{eq:SMC_data}, seems to have attracted the most attention. To the best of our knowledge, authors of \cite{Rubinov05} first used the designation \emph{sum of minima of convex} functions. The scope of their paper is to run the (zero-order) discrete gradient method (see e.g. \cite{Bagirov08} for an updated version) on tasks like generalized clustering as well as Bradley-Mangasarian approximation of finite sets \cite{Ghosh05}. They also tried the hybrid discrete gradient/simulated annealing method from \cite{Bagirov03} as heuristic to escape from bad sub-optimal valleys. Roughly speaking, this approach only relies on function evaluations and does not fully take advantage of (DC)'s structure nor \eqref{eq:min_problem}'s. In contrast, \cite{Artacho22}, and later \cite{Tran23}, devised algorithms fundamentally based on (DC)'s decomposition \eqref{eq:DC_decomposition} to tackle clustering with constraints. Both papers displayed good empirical performances of their algorithms and proved that every converging subsequence of their generated iterates leads to an accumulation point that is \emph{critical}. Closer to our framework, authors in \cite{LiuJian19} address \eqref{eq:min_problem} problems of \textbf{range cuts} type, e.g.\@ signal-restoration, clipped linear regression or more generally, statistical learning subject to outlier detection. Their methodology aims at solving \eqref{eq:min_problem} globally thanks to a spatial segmentation of the ambient space $\mathcal{X}$ into regions $\mathcal{R}$,
 as in \eqref{eq:packing}. However, as soon as $d\geq 3$, this method hardly remains useful in practice. Moreover, one needs an efficient tool to enumerate all the intersections of $\mathcal{R}$-boundaries, a very complicated task in the absence of closed-forms, even when $d=2$. \\ \\The terminology of \emph{min-convex} also appears in \cite{Dao19} where it is shown that the the proximal operator of a \eqref{eq:min_problem} objective, which can be cast into a \eqref{eq:min_problem} problem, satisfies a property of \textit{union averaged nonexpansiveness}. Nevertheless, authors do not provide specific algorithms\footnote{Other than the naive \texttt{enumeration approach} \eqref{eq:equiv_set}, not suitable when $N$ is medium to large.} to compute the associated proximal mapping and their findings remain theoretical in what concerns our applications, e.g. Example \ref{DC_fitting}.\\ Very recently, \cite{Ding24} proposed a new inertial variant of a generalized Lloyd's algorithm\footnote{In the context of clustering, Lloyd's algorithm corresponds to plain alternating minimization (\texttt{AM}).}, named \texttt{inLloyd}, applicable on \eqref{eq:min_problem} instances that are of \textbf{multiple choices} type, $$x =(x_1,\dots,x_{\bar{n}})\to h^{(s)}(x) = \min_{l\,\in\,[\bar{n}]}\,H^{(s)}(x_l)\quad \forall s \in [N],$$
 with $H^{(s)}$ strongly convex. In this precise situation, in the spirit of \texttt{kmeans++} \cite{Arthur07}, they come up with a randomized strategy to set up the initial values of the replicas $\{x_l\}_{l=1}^{\bar{n}}$ with proven quality guarantees. Unfortunately, we were not able to extend their methodology to initialize instances of \eqref{eq:min_problem} in the general case. \\ \\The present work places itself in the direct continuity of \cite{Baratt20}. Just like \cite{LiuJian19}, this latter work is devoted to \eqref{eq:min_problem} instances for which every term in the sum is the pointwise minimum between a convex function and a simple constant value $\lambda$ (i.e. $n_s=2$ for every $s \in [N]$). Therein, authors call such terms \emph{clipped convex functions}.
 \begin{equation}
F^*_{\text{clipped}} = \min_{x\, \in\, \mathcal{X}}\, \bar{h}(x) + \frac{1}{N}\, \sum_{s=1}^N\, \min\{h_1^{(s)}(x), \lambda\}.
\label{eq:clipped_convex_simple}
\tag{SCC}
\end{equation}
Their contribution is twofold. First, they highlight a bi-convex reformulation of \eqref{eq:clipped_convex_simple}. \begin{equation}
F^*_{\text{clipped}} = \min_{Q= (q^{(1)},\dots,q^{(N)})\,\in\, [0,1]^N}\,\underbrace{\min_{x\, \in\, \mathcal{X}}\, \bar{h}(x) + \frac{1}{N}\, \sum_{s=1}^N\, q^{(s)}\cdot h_1^{(s)}(x) +(1-q^{(s)})\cdot \lambda}_{\bar{F}^*_{|Q}}
\label{eq:clipped_convex_reformulation}
\end{equation}
Out of \eqref{eq:clipped_convex_reformulation}, one usually alternates between an exact minimization with respect to $Q$ and $x$ (\texttt{AM}). However, authors of \cite{Baratt20} suggest to perform inexact $Q$-updates, 
\begin{equation}
q^{(s)}_+ = \Pi_{[0,1]^N}\big(q^{(s)}-\kappa\cdot \text{sign}(h_1^{(s)}(x)-\lambda)\big) \quad \forall s \in [N], \label{eq:iAM_original}
\end{equation}
for a fixed $\kappa \in (0,1]$ ($\kappa=1$ implementing usual \texttt{AM}). We note that where $\bar{F}^*_{|Q}$ is locally differentiable, one can interpret updates \eqref{eq:iAM_original} as projected signed gradient steps \cite{Moulay19}. 

\noindent Second, they devise a mixed-integer convex program (MICP) equivalent to \eqref{eq:clipped_convex_simple}, 
\begin{align} F^*_{\text{clipped}} = \min_{x\,\in\,\mathcal{X}} \quad &\, \frac{1}{N}\,  \sum_{s=1}^N\,(\bar{h})^\dagger(x-z^{(s)},1-q^{(s)})+(\bar{h})^\dagger(z^{(s)},q^{(s)}) \label{eq:clipped_micp} \\ & +\frac{1}{N}\,  \sum_{s=1}^N\,(h_1^ {(s)})^\dagger(z^{(s)},q^{(s)})+(1-q^{(s)})\cdot \lambda \nonumber\\
\textrm{s.t.} \quad & q^{(s)} \in \{0,1\},\, z^{(s)}\in\mathcal{X}\quad \forall s \in [N] \nonumber 
\end{align}
where a (convex) perspective function $(h)^\dagger$ is obtained from any closed convex\footnote{We recall that  \emph{components} as well as the \emph{main function} are all assumed proper, closed and convex.} $h$ as,
\begin{equation}
(h)^\dagger : \mathbb{R}^d \times \mathbb{R}_+,\quad (x,q) \to (h)^\dagger(x,q) = \begin{cases} 
q\cdot h(x/q) & q>0 \\
0& q=0,\hspace{2pt}x=\mathbf{0}_d\\
+ \infty & \text{otherwise}.
\end{cases}
\label{eq:perspective_redef}
\end{equation} 
To obtain \eqref{eq:perspective_redef} and the property that $(h)^\dagger$ is well-defined and convex, it is assumed iin \cite{Baratt20} that $\bar{h}$ is superlinear and $\mathbf{0}_d \in \text{dom}(\bar{h})$. Note these requirements slightly decrease the generality of the approach. As the authors noticed, aside from being intrinsically hard as a (MICP), formulation \eqref{eq:clipped_micp} exhibits $N\cdot d$ extra continuous variables $\{z^{(s)}\}_{s=1}^{N}$ (i.e. compared to the raw problem \eqref{eq:clipped_convex_reformulation}) in addition to the seemingly unavoidable $N$ binary variables $\{q^{(s)}\}_{s=1}^{N}$. From a practical aspect, this renders the resolution of \eqref{eq:clipped_micp} quite time consuming (or impossible) even if the problem is only restricted to a smaller domain portion $\mathcal{S} \subset \mathcal{X}$. As argued in \cite{Baratt20}, solving \eqref{eq:clipped_micp} while relaxing integrality constraints on $Q$ yields a lower-bound on $F^*_{\text{clipped}}$ and a possible starting iterate in the $x$-coordinates. In this case, to circumvent the dimensionality drawback, it is possible to apply \texttt{ADMM} procedures  \cite{Boyd11}  to deal with a \emph{consensus form} of the continuous relaxation of \eqref{eq:clipped_micp}. Both the aforementioned contributions initiated the present work. Indeed, our first goal had been to provide extensions of models \eqref{eq:clipped_convex_reformulation} and \eqref{eq:clipped_micp} to fit in the general \eqref{eq:min_problem} picture. \\

\noindent Finally, we also mention that a last related paper \cite{Zuo23} appeared in the literature. Therein, a first step towards more versatility is undertaken.  Indeed, the authors consider again a \eqref{eq:min_problem} instance for which $n_s=2$ for every $s\in [N]$ but this time, $h^{(s)}_2$ needs not to be a constant. However, they implicitly assume that $\bar{h} \in \mathcal{C}^{1}(\mathbb{R}^d)$. Their methodology works as follows. By approximating each pointwise minimum in the \eqref{eq:min_problem} sum by its own smooth minimum surrogate, they leverage the power of \texttt{(L-)BFGS} (unconstrained case) or any black-box method dedicated to constrained smooth nonconvex optimization. Overall, according to the numerical experiments conducted in \cite{Zuo23}, this new approach performed equally as well as \cite{Baratt20} on similar problems.  
\subsection{Contributions}
Now that the framework of \eqref{eq:min_problem} has been thoroughly introduced, we summarize below our own specific contributions. We emphasize that all our numerical experiments (and, soon, a free to use new dedicated \texttt{CVXPY} bundle called \texttt{smc}, improving upon \href{https://github.com/cvxgrp/sccf}{\texttt{sccf}} from \cite{Baratt20}), are available on our \href{https://github.com/guiguiom/SM_public}{GitHub} repository. Focusing on mathematical optimization contributions, we point out the following elements:  \\
\vspace{-10pt}
 \begin{itemize} 
     \item First, we detail how and when \eqref{eq:min_problem} can be rewritten as a \emph{big-M} (MICP) (Section \ref{sec:smc_reformulations}). In stark contrast with the perspective reformulation \eqref{eq:clipped_micp} of \cite{Baratt20}, for which we also provide an extension with arbitrary $n_s\in \mathbb{N}$ for every $s\in[N]$ in \eqref{eq:gen_perspective}, our (MICP) does not involve $\sum_{s=1}^{N}\,n_s$ copies of the decision vector $x$, hence $d$ (i.e. the dimension of $x$) times as many extra variables.\\
     \item Second, building upon (Proposition 1.4) from \cite{VanDessel24}, we restrict our global (MICP) formulation on local neighbourhoods (Section \ref{subsec:local_micp}). As a byproduct of our local (MICP) formulation, we state \emph{equivalent} \emph{local optimality} conditions (Lemma \ref{lemma_lo_smc}). From these latter, useful \emph{sufficient} conditions (Corollary \ref{coro:restart_check}) are derived to possibly go beyond \emph{criticality}, the usual standard in (DC) optimization. \\
     \item Third, we expose the bi-convex formulation of general \eqref{eq:min_problem} (Section \ref{subsec:global_bic}). We review plain alternating minimization (\texttt{AM}) applied on this formulation to underpin another new lemma (Lemma \ref{lemma:criticality}) describing \emph{criticality} sufficient conditions. This lemma together with sufficient descent conditions will serve as backbones to develop our new family of heuristics, i.e. relaxed alternating minimization (\texttt{r-AM}) methods. We prove that every accumulation point of the main sequence of \texttt{r-AM} iterates is \emph{critical} for problem \eqref{eq:min_problem} (Theorem \ref{theorem_ram}).\\
     \item Finally, we display the results of numerical experiments (Section \ref{sec:numerical_exp}), demonstrating the practical efficiency of two \texttt{r-AM} methods of ours when tackling \eqref{eq:min_problem} on \emph{piecewise-linear regression} or \emph{facility location} problems. In addition, thanks to the aforementioned \emph{sufficient} conditions (Corollary \ref{coro:restart_check}), we show that we were sometimes successful in certifying \emph{local optimality} of \texttt{r-AM} iterates (Section \ref{subsec:lo_certif}).
    \end{itemize}
    \vspace{-10pt}
\subsection{Preliminaries}\label{sec:prelim}
For any $d\in \mathbb{N}$,
we endow $\mathbb{R}^{d}$ with the usual dot product and the classical Euclidean norm. 
For any $p$-norm $||\cdot||_p$ ($p \geq 1$) on $\mathbb{R}^d$, $R \geq 0$ and $\bar{x} \in \mathbb{R}^d$, we define the corresponding ball
\vspace{-10pt}
\begin{equation}
\mathbb{B}_{p}(\bar{x}\,;\,R) := \big\{x \in \mathbb{R}^d\,|\, ||x-\bar{x}||_p \leq R \big\}.
\end{equation}
\noindent The notation $\mathbf{1}_d$ (respectively $\mathbf{0}_d$) represents the \emph{all-ones} (respectively \emph{all-zeroes}) vector of size $d$. Let $h : \mathbb{R}^d \to \mathbb{R} \cup \{\infty\}$, the \emph{domain} of $h$, noted $\text{dom} \,h$, is defined as
\begin{equation}
\text{dom} \,h := \{x\in\mathbb{R}^d\,|\,h(x)<\infty\}.
\label{def:domain}
\end{equation}
\begin{definition}[Critical point]\label{def:cp}
 Let $\mathcal{X} \subseteq \mathbb{R}^d$ be a convex set and $f_1,f_2$ two proper, convex and lower semicontinuous functions. A point $\hat{x} \in \mathcal{X}$ is called \emph{critical} for \begin{equation*}F^*_{\text{DC}}=\min_{x\,\in\,\mathcal{X}}\,f_1(x)-f_2(x) \label{eq:DC_straight} \end{equation*} \vspace{-5pt}if and only if there exists\footnote{$\mathcal{N}_{\mathcal{X}}(x)$ depicts the \emph{normal cone} of $\mathcal{X}$ at $x$.} $\hat{v} \in \mathcal{N}_{\mathcal{X}}(\hat{x})$, $g_1 \in \partial f_1(\hat{x})$ and $g_2 \in \partial f_2(\hat{x})$ such that 
\begin{equation}
\hat{v} + g_1 - g_2 = \mathbf{0}_d.
\label{eq:critical_DC}
\end{equation}
\end{definition}
\begin{definition}[Local optimality] \label{def:locopt} A point $\hat{x} \in \mathcal{X}$ is called a \emph{local minimizer} of \eqref{eq:min_problem} if $F(\hat{x})<\infty$, i.e. $\hat{x} \in \text{dom}\,F$, and there exists a radius $\alpha>0$ such that 
\begin{equation}
 F(\hat{x}) = \min_{x\, \in \,\mathcal{X} \,\cap \,\mathbb{B}_2(\hat{x},\alpha)}\,F(x).
\end{equation}
\end{definition}
\noindent Now, we extend the informal definition of \eqref{eq:active} to capture not only active \emph{components} at $x \in \mathcal{X}$ but also $\rho$-close to active ones. 
\begin{definition}[$\rho$-active sets] \label{def:active_set} Let $\rho \in [0,1]$. For every $s\in [N]$, we call the set \begin{equation}
\mathcal{A}^{(s)}_{\rho}(x) := \Bigg\{ \tilde{l}\,\in \,[n_s]\,\bigg|\, h^{(s)}_{\tilde{l}}(x) -h^{(s)}(x) \leq \rho \cdot \bigg(\max_{l\,\in\,[n_s]}\,h^{(s)}_l(x) - h^{(s)}(x)\bigg)\Bigg\}
\label{eq:as_min}
\end{equation}
the ($\rho$ -)\emph{active set} at $x \in \mathcal{X}$ of the $s$-th term.
\end{definition}
\noindent By convention, $\mathcal{A}^{(s)}(x) = \mathcal{A}^{(s)}_{0}(x)$ is simply called the \emph{active set} at $x$ of the $s$-th term.
\begin{proposition}
\label{local_non_expansiveness}
Let $\hat{x}\in\mathcal{X}$. There exists  $\alpha >0$ such that for every $x \in \mathcal{X} \cap \mathbb{B}_2(\hat{x}\,;\,\alpha)$, \begin{equation}\mathcal{A}^{(s)}(x) \subseteq \mathcal{A}^{(s)}_\rho(\hat{x}) \quad\forall \rho\in[0,1],\quad \forall s \in [N]\label{eq:stable_active_sets}
\end{equation}
\end{proposition}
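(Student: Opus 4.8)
The plan is to reduce the statement to the single case $\rho=0$ and then read it off from the continuity of the finitely many component functions, i.e.\ from the upper semicontinuity of the active-set (argmin) map at $\hat{x}$. First I would record the monotonicity of the $\rho$-active sets. Fixing $s$, the right-hand threshold $\rho\cdot\big(\max_{l}h^{(s)}_l(\hat{x})-h^{(s)}(\hat{x})\big)$ in the defining inequality \eqref{eq:as_min} is nonnegative and nondecreasing in $\rho$, so $\mathcal{A}^{(s)}_0(\hat{x})\subseteq\mathcal{A}^{(s)}_\rho(\hat{x})$ for every $\rho\in[0,1]$. Moreover $\mathcal{A}^{(s)}_0(\hat{x})=\mathcal{A}^{(s)}(\hat{x})$, since the inequality $h^{(s)}_{\tilde{l}}(\hat{x})-h^{(s)}(\hat{x})\leq 0$ forces equality because $h^{(s)}$ is the pointwise minimum. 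Consequently it suffices to produce one radius $\alpha>0$ with $\mathcal{A}^{(s)}(x)\subseteq\mathcal{A}^{(s)}(\hat{x})$ for all $s\in[N]$ and all $x\in\mathcal{X}\cap\mathbb{B}_2(\hat{x}\,;\,\alpha)$; chaining with the inclusion above then yields the conclusion for arbitrary $\rho$.

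Next I would argue elementwise by contraposition. Fix $s\in[N]$ and a component $l\notin\mathcal{A}^{(s)}(\hat{x})$, and set the positive gap $\delta:=h^{(s)}_l(\hat{x})-h^{(s)}(\hat{x})>0$. Choosing any reference index $l^\star\in\mathcal{A}^{(s)}(\hat{x})$, we have $h^{(s)}_{l^\star}(\hat{x})=h^{(s)}(\hat{x})$. By continuity of the component functions at $\hat{x}$ (available since finite-valued convex functions are continuous), there is a radius $\alpha^{(s)}_l>0$ such that both $|h^{(s)}_l(x)-h^{(s)}_l(\hat{x})|<\delta/2$ and $|h^{(s)}_{l^\star}(x)-h^{(s)}_{l^\star}(\hat{x})|<\delta/2$ hold on $\mathbb{B}_2(\hat{x}\,;\,\alpha^{(s)}_l)$. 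For such $x$ one gets $h^{(s)}(x)\leq h^{(s)}_{l^\star}(x)<h^{(s)}(\hat{x})+\delta/2<h^{(s)}_l(x)$, so $l\notin\mathcal{A}^{(s)}(x)$.

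Finally I would set $\alpha:=\min\big\{\alpha^{(s)}_l : s\in[N],\ l\in[n_s]\backslash\mathcal{A}^{(s)}(\hat{x})\big\}>0$, a minimum over finitely many positive numbers (with $\alpha$ taken arbitrary in the degenerate case where every $\mathcal{A}^{(s)}(\hat{x})=[n_s]$, so that no $l$ needs excluding). For $x\in\mathcal{X}\cap\mathbb{B}_2(\hat{x}\,;\,\alpha)$ no component outside $\mathcal{A}^{(s)}(\hat{x})$ is active, whence $\mathcal{A}^{(s)}(x)\subseteq\mathcal{A}^{(s)}(\hat{x})$ for every $s$, and the first paragraph upgrades this to the claimed inclusion $\mathcal{A}^{(s)}(x)\subseteq\mathcal{A}^{(s)}_\rho(\hat{x})$ for all $\rho\in[0,1]$.

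The only mild obstacle is the bookkeeping needed to make the radius uniform across all terms and all inactive components, which is immediate by finiteness of the index sets, together with the point that the \emph{value} $h^{(s)}(x)$ of the minimum must be controlled at $x$; this is handled by tracking a fixed minimizer $l^\star$ of $\hat{x}$ rather than the possibly varying minimizer at $x$. There is no deeper difficulty, as the statement is exactly the upper semicontinuity of the argmin of a finite family of continuous functions.
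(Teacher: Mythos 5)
Your proof is correct. The paper itself disposes of this proposition in two lines: it invokes Proposition~1.4 of the cited reference \cite{VanDessel24} ``verbatim'' to obtain, for each $s\in[N]$, a radius $\alpha^{(s)}>0$ with $\mathcal{A}^{(s)}(x)\subseteq\mathcal{A}^{(s)}_\rho(\hat{x})$ on $\mathcal{X}\cap\mathbb{B}_2(\hat{x}\,;\,\alpha^{(s)})$, and then sets $\alpha=\min_s\alpha^{(s)}$. What you have done is supply a self-contained proof of exactly the per-term inclusion that the paper outsources: the reduction to $\rho=0$ via monotonicity of the threshold in \eqref{eq:as_min}, the observation that $\mathcal{A}^{(s)}_0(\hat{x})=\mathcal{A}^{(s)}(\hat{x})$, and the $\delta/2$-argument tracking a fixed active index $l^\star$ at $\hat{x}$ to control the value of the minimum at nearby $x$ are all sound, and the finiteness of the index sets makes the uniform radius immediate. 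The only cosmetic remark is that your parenthetical justification of continuity (``finite-valued convex functions are continuous'') is unnecessary and slightly imprecise in general; the paper assumes the component functions continuous outright, so you can simply appeal to that hypothesis. Your argument buys self-containedness at essentially no cost, while the paper's buys brevity by leaning on prior work; mathematically they rest on the same upper-semicontinuity-of-argmin fact.
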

\begin{proof}
According to (\cite{VanDessel24}, Proposition 1.4) that applies verbatim, for every $s \in [N]$ there exists $\alpha^{(s)}>0$ such that $\mathcal{A}^{(s)}(x)\subseteq \mathcal{A}^{(s)}_\rho(\hat{x})$ for every $x \in \mathcal{X}\,\cap\,\mathbb{B}_2(\hat{x}\,;\,\alpha^{(s)})$. \\Then, \eqref{eq:stable_active_sets} simply holds by taking $\alpha = \min_{s\,\in\,[n]}\,\alpha^{(s)}$.
\end{proof}
\begin{definition}[Standard simplex]
Let $\bar{n} \in \mathbb{N}$. The $\bar{n}$-\emph{standard simplex} reads
\begin{equation}
\Delta^{\bar{n}} := \{q \in \mathbb{R}^{\bar{n}}\,|\,\langle\mathbf{1}_{\bar{n}},q\rangle = 1,\, q\geq \mathbf{0}_{\bar{n}}\}.
\end{equation}
\end{definition}
\begin{assumption}
We end this section with the following blanket assumption. 
\begin{itemize}
\item One has access to a deterministic black-box algorithm that will return for any fixed $Q=(q^{(1)},\dots,q^{(N)}) \in \bigtimes_{s=1}^{N}\,\Delta^{(n_s)}$ the same output \begin{equation} x^*_{|Q} \in \arg \min_{x\,\in\,\mathcal{X}}\,\bar{h}(x) + \frac{1}{N}\sum_{s=1}^{N}\sum_{l=1}^{n_s}\,q^{(s)}_l\, h^{(s)}_l(x). \label{eq:full_oracle_problem} \end{equation}
\end{itemize}
\label{A1}
\end{assumption}
\noindent Note that if every $q^{(s)}$ is a vertex of $\Delta^{(n_s)}$, i.e. there exists $\sigma_s \in [n_s]$ such that $q^{(s)}_{\sigma_s}=1$ and $q^{(s)}_l = 0$ for every $l \in [n_s]\backslash \{\sigma_s\}$, \eqref{eq:full_oracle_problem} solves \eqref{eq:F_sigma_sel} with $\sigma = (\sigma_1,\dots,\sigma_N)$. 


\newpage 

\section{Equivalent representations of (SMC)}

\label{sec:smc_reformulations}

In this section we aim at explaining the core procedure to reformulate problem \eqref{eq:min_problem} as a \emph{big-M} (MICP). We first need to introduce a localized definition of \emph{big-M} bounds in what concerns \eqref{eq:min_problem}. That is, given a zone $\mathcal{S} \subseteq \mathbb{R}^d$, one needs a finite bound on the maximal difference of value (on $\mathcal{S}$) achievable by every pair of \emph{components}. These bounds will come into play to provide an epigraph representation of objective $F$,  restricted to $\mathcal{S}$,  using convex constraints and binary variables. To construct an equivalent formulation of \eqref{eq:min_problem}, one needs a representation of $F$ that is, \emph{at least}, valid on the basic feasible domain $\mathcal{X}$, hence the necessity of Assumption \ref{A2}. However, when one only wants a local representation, computing these bounds on a small $\mathcal{S} \not\supseteq \mathcal{X}$ will prove useful (see Section \ref{subsec:lo_certif}).  
That being said, this new assumption is satisfied for our applications, including the interesting \emph{piecewise-linear regression} (Example \ref{DC_fitting}). 

\begin{definition}[$\mathcal{S}$-bounds]
\label{def:s_bounds}
Let $\mathcal{S} \subseteq \mathbb{R}^d.$ We call $\mathcal{S}$-bounds for a given \eqref{eq:min_problem} values $M^{(s)}_{l_+,l}<\infty$ for every $s \in [N]$ and $(l_+,l)\in[n_s]^2$ such that $M^{(s)}_{l,l}=0=M^{(s)}_{l_+,l_+}$ and
\begin{equation} M^{(s)}_{l_+,l} \geq \max_{u \,\in\, \mathcal{S}}\, h^{(s)}_{l_+}(u)-h^{(s)}_{l}(u) \quad \text{s.t.} \quad h^{(s)}_{l}(u)= h^{(s)}(u).
\label{eq:bound_problem_M}
\end{equation}
By convention, if $\{u \, \in \, \mathcal{S} \,|\,h_l^{(s)}(u) = h^{(s)}(u)\} = \emptyset$, i.e. when \emph{component} $h^{(s)}_l$ is nowhere \emph{active} on $\mathcal{S}$ regarding the term $h^{(s)}$, one should set $M^{(s)}_{l_+,l} = -\infty$. 
\end{definition}

\begin{assumption} 
We dispose of $\mathcal{S}$-bounds for problem \eqref{eq:min_problem} with $\mathcal{S} \supseteq \mathcal{X}$.
\label{A2}
\end{assumption}

\remark Definition \ref{def:s_bounds} fixes which properties our own so-called \emph{big-M} constants must satisfy. As we will see in Section \ref{sec:glob_micp_model}, akin to \cite{bigM15}, their role is to model a selection between multiple incentives. Here, if $l$ is not the index of the selected \emph{component} for the $s$-th term ($s \in [N]$), then it must be some $l' \in [n_s] \backslash \{l\}$ and the inequality $h_{l}^{(s)}(x) \leq \nu^{(s)} + M_{l,l'}^{(s)}$ should hold at any $x \in \mathcal{X}$ (Assumption \ref{A2}) for $\nu^{(s)}=h^{(s)}_{l'}(x)$.

\paragraph*{Computability of $\mathcal{S}$-bounds}
As such, even if we relax the nonconvex constraints on the right-hand side of \eqref{eq:bound_problem_M}, the problem defining our \emph{big-M} bounds remains of (DC) nature. If the dimension $d$ is (very) small and the topology of $\mathcal{X}$ polyhedral, one might resort to global optimization techniques \cite{PiraniUlus23} to get a tight value of the corresponding \emph{big-M} constant. Otherwise, the general problem is not tractable. We identify however generic cases for which one can compute upper-bounds with $\mathcal{S} = \mathcal{X}$.

\begin{enumerate}
\item $h_l^{(s)}(\cdot)$ has $L$-Lipschitz continuous gradient with respect to a norm $||\cdot||$ and the feasible domain is bounded with diameter $D = \sup_{(x,y) \in \mathcal{X}^2}\,||x-y|| < \infty$.
\begin{equation} M^{(s)}_{l_+,l} \gets \max_{u \,\in \,\mathcal{X}}\,h_{l_+}^{(s)}(\bar{x}) + \langle \nabla h_{l_+}^{(s)}(\bar{x}), u-\bar{x}\rangle + \frac{L}{2}D^2 -h_l^{(s)}(u) 
\label{eq:bound_M_smooth}
\end{equation}
for any reference $\bar{x} \in \mathcal{X}$ chosen. We emphasize that problem \eqref{eq:bound_M_smooth} is convex. \\ This latter might eventually be solved for more than one reference $\bar{x}$, the \emph{big-M} constant would then be chosen as the smallest encountered. \\
\item $h_l^{(s)}(\cdot) = \max_{e\,\in\,[B]}\,\gamma^{(s)}_{l,e}+\langle \beta^{(s)}_{l,e}, \cdot \rangle$ and $\mathcal{X}$ is bounded.
\begin{equation} M^{(s)}_{l_+,l} \gets \max_{e\,\in\,[B]}\,\bigg\{\max_{u\, \in\, \mathcal{X}}\, \gamma^{(s)}_{l,e}+\langle \beta^{(s)}_{l,e}, u \rangle -h_l^{(s)}(u)\bigg\}.
\label{eq:bound_M_max}
\end{equation}
Thus, one gets $M^{(s)}_{l_+,l}$ by solving $B$ convex programs, e.g.\@ LPs if $\mathcal{X}$ is polyhedral.\\
\item Any Trust Region Subproblem (TRS) like setup $(h_{l_+}^{(s)}(\cdot),h_l^{(s)}(\cdot),\mathcal{X})$ matching requirements of \cite{Nguyen17}. For example, let us describe the classical (TRS) case. \\Let $R>0$ and $\mathcal{X} = \mathbb{B}_2(\mathbf{0}_d\,;\,R)$. If for every $x \in \mathcal{X}$, $$ h_{l_+}^{(s)}(x)-h_l^{(s)}(x) = \frac{1}{2} \langle \mathbf{V}_{l_+,l}^{(s)} x, x \rangle + \langle \bar{v}_{l_+,l}^{(s)}, x \rangle + \gamma_{l_+,l}^{(s)}$$ for a symmetric matrix $\mathbf{V}_{l_+,l}^{(s)} \in \mathbb{R}^{d\times d}$, vector $\bar{v}_{l_+,l}^{(s)} \in \mathbb{R}^d$ and constant $\gamma_{l_+,l}^{(s)}$ then 
\begin{equation}
-M_{l_+,l}^{(s)} \gets \min_{u\,\in\,\mathcal{X}}\, -\bigg(\frac{1}{2} \langle \mathbf{V}_{l_+,l}^{(s)} u, u \rangle + \langle \bar{v}_{l_+,l}^{(s)}, u \rangle + \gamma_{l_+,l}^{(s)}\bigg) + \min\bigg\{\frac{\lambda_{l_+,l}^{(s)}}{2},0\bigg\}\,(R^2-||u||_2^2).
\label{eq:bound_M_TRS}
\end{equation}
where $\lambda_{l_+,l}^{(s)} = \lambda_{\text{min}}(-\mathbf{V}_{l_+,l}^{(s)})$. Again, the resulting problem \eqref{eq:bound_M_TRS} is convex \cite{Song22}.\\ 
\item $h_l^{(s)}(\cdot)$ is bounded from below by $\check{H}_{l}^{(s)} >- \infty$ and $h_{l_+}^{(s)}(\cdot)$ is bounded from above by $\hat{H}_{l_+}^{(s)} < \infty $. Then, it trivially comes
\begin{equation}
M_{l_+,l}^{(s)} \gets \hat{H}_{l_+}^{(s)}-\check{H}_{l}^{(s)}.
\label{eq:bound_M_crude}
\end{equation}
\end{enumerate}
\begin{remark} \label{rem:big_M_number}
If $n_s=\bar{n}$ for every $s\in [N]$, as it is the case for e.g. \eqref{eq:SMC_data}, there are $\mathcal{O}(N\bar{n}^2)$ \emph{big-M} parameters to compute to obtain the whole collection of $\mathcal{S}$-bounds. 
\end{remark}

\subsection{Global (MICP) Model} \label{sec:glob_micp_model} Now that the adequate tools have been introduced, we recast \eqref{eq:min_problem} as a (MICP). 
\paragraph*{\emph{big-M} formulation}
For the purpose of this formulation, we apply the standard \emph{epigraph} technique on every term $h^{(s)}$ of \eqref{eq:min_problem}'s sum. Specifically, for every $s \in [N]$, we introduce a continuous variable $\eta^{(s)}$ and we need to rewrite
\begin{equation} h^{(s)}(x) \leq \eta^{(s)} \label{eq:min_convex_ineq}
\end{equation}
as an equivalent set of constraints, eventually including binary variables, whose continuous relaxations are convex. Then, instead of the initial objective, we can minimize
\begin{equation} \bar{h}(x) + \frac{1}{N}\sum_{s=1}^{N}\,\eta^{(s)} \label{eq:new_min_problem_obj}
\end{equation}
on the new feasible set, i.e.\@ constituted of the aforementioned constraints as well as the initial feasible set $\mathcal{X}$. 

\noindent To encode \eqref{eq:min_convex_ineq}, we resort to $n_s$ new binary variables $t^{(s)} = (t^{(s)}_1,\dots,t^{(s)}_{n_s}) \in \{0,1\}^{n_s}$ and impose that exactly one of them equals $1$, i.e.
\begin{equation}
\sum_{l=1}^{n_s}\,t^{(s)}_l = 1.
\label{eq:single_t}
\end{equation}
Let $t_\sigma^{(s)}=1$ for some $\sigma \in [n_s]$. We recall that, by definition, if $h_l^{(s)}(x)\leq \eta^{(s)}$ for any $l \in [n_s]$ then $h^{(s)}(x) \leq \eta^{(s)}$. We consider the following inequalities for every $l_+ \in [n_s]$,
\begin{equation}
h_{l_+}^{(s)}(x) \leq \eta^{(s)} + \sum_{l=1}^{n_s}\,t_l^{(s)}\,M_{l_+,l}^{(s)}.\label{eq:almost_there}
\end{equation}
Among the inequalities \eqref{eq:almost_there} is hidden the required implication from \eqref{eq:min_convex_ineq} since 
\begin{equation} h_{\sigma}^{(s)}(x) \leq \eta^{(s)}+ M_{\sigma,\sigma}^{(s)} = \eta^{(s)} \Rightarrow h^{(s)}(x) \leq \eta^{(s)} \label{eq:first_strong_implication} \end{equation}
where we used \eqref{eq:single_t} and the fact that, by definition, $M_{l,l}^{(s)} = 0$ for every $l \in [n_s]$.\\
For every $l_+ \in [n_s] \backslash \{\sigma\}$, we verify now that inequalities \eqref{eq:almost_there} are not restrictive.\\ We start by noticing that for any $l \in \mathcal{A}^{(s)}(x)$, it \textbf{always} holds 
\begin{align}h_{l_+}^{(s)}(x)-h^{(s)}_{\sigma}(x) &\leq h_{l_+}^{(s)}(x)-h^{(s)}_{l}(x)\nonumber \\ &\leq \max_{u\,\in\,\mathcal{X}}\, h_{l_+}^{(s)}(u)-h^{(s)}_{l}(u)\quad \text{s.t.}\quad h_l^{(s)}(u) = h^{(s)}(u)\nonumber \\ &\leq M_{l_+,l}^{(s)}.\label{eq:non_restrictive}\end{align}
Then, it readily follows from \eqref{eq:first_strong_implication} and \eqref{eq:non_restrictive} that
\begin{equation} h_{l_+}^{(s)}(x) \leq  h_\sigma^{(s)}(x) + M^{(s)}_{l_+,l} \leq \eta^{(s)} + M_{l_+,l}^{(s)}. \label{eq:clearer_version_ineq} \end{equation} In the mean time, at any minimizer $x$ of \eqref{eq:new_min_problem_obj}, it is \textbf{impossible} that $t^{(s)}_\sigma=1$ and $\sigma \not \in \mathcal{A}^{(s)}(x)$ (which would mean $h^{(s)}_l(x)<h^{(s)}_{\sigma}(x)$) because choosing $t^{(s)}_{l}=1$ instead would allow $\eta^{(s)}$ to decrease, achieving thereby a strictly better objective value. This alternative choice is possible since $x$ variable would remain feasible. Indeed, for every $l_+ \in [n_s]\backslash \{l\}$, inequality \eqref{eq:almost_there} boils down to \eqref{eq:clearer_version_ineq} that is \textbf{always} satisfied under these conditions (i.e. $t^{(s)}_=1$ with $l \in \mathcal{A}^{(s)}(x)$) and \eqref{eq:first_strong_implication} simply turns into $h^{(s)}_{l}(x) \leq \eta^{(s)}$.
Thus, we can conclude that $t^{(s)}_\sigma=1$ enforces $\sigma \in \mathcal{A}^{(s)}(x)$ at the optimum and, thereby, we can safely lay down the equivalence
\begin{equation}
h^{(s)}(x)\leq \eta^{(s)} \Leftrightarrow \sum_{l=1}^{n_s}\,t_l^{(s)} = 1,\hspace{3pt} h^{(s)}_{l_+}(x) \leq \eta^{(s)} + \sum_{l=1}^{n_s}\,t_l^{(s)}\,M_{l_+,l}^{(s)}\quad \forall l_+ \,\in\,[n_s].
\label{eq:epigraph_fund}
\end{equation}
We have now a formulation that completely avoids the nonconvex term $h^{(s)}$ in \eqref{eq:min_convex_ineq}. 
\newpage 
Based on \eqref{eq:epigraph_fund}, we can formulate the following parametric mixed-integer convex program. For every $x \in \mathcal{X}$ and $C \in [0,1]$, $F^*(C) = \min_{x \in \mathcal{X}}\,V_C(x)$ where
\vspace{-10pt}
\begin{align}
V_C(x) := \min_{(\eta^{(1)},\dots,\eta^{(N)})\, \in\, \mathbb{R}^N} \quad & \bar{h}(x) + \frac{1}{N}\sum_{s=1}^{N}\,\eta^{(s)}& \label{eq:new_min_problem}
 \\
\textrm{s.t.} \quad & t^{(s)} \in \{0,1\}^{n_s},\hspace{15pt}\sum_{l=1}^{n_s}\,t_l^{(s)} = 1 & \forall s \in [N] \nonumber \\
 &  h^{(s)}_{l_+}(x) \leq \eta^{(s)} + C \cdot \sum_{l=1}^{n_s}\,t_l^{(s)}\,M_{l_+,l}^{(s)}
& \forall s \in [N],\quad \forall l_+ \in [n_s]\nonumber
\end{align}
One obtains the (MICP) formulation of \eqref{eq:min_problem} by setting $C=1$, i.e. $F^*(1)=F^*$. 

\vspace{-10pt}
\paragraph*{Continuum from \texttt{max} to \texttt{min}} One can interpret the goal of minimizing the sum of maximums of convex functions (i.e. \textbf{max} hereafter) as $F^*(0)$ regarding \eqref{eq:new_min_problem} above.
\vspace{-12pt}
$$ \textbf{max} \equiv \min_{x \in \mathcal{X}}\,\bar{h}(x) + \frac{1}{N}\sum_{s=1}^{N}\,\max_{l\,\in\,[n_s]}\,h_l^{(s)}(x) \quad \text{(convex)}$$
\vspace{-20pt}
$$\textbf{mean} \equiv \min_{x \in \mathcal{X}}\,\bar{h}(x) + \frac{1}{N}\sum_{s=1}^{N}\,\sum_{l=1}^{n_s}\,\frac{1}{n_s}\,h_l^{(s)}(x) \quad \text{(convex)}$$
\vspace{-20pt}
$$ \textbf{min} \equiv \min_{x \in \mathcal{X}}\,\bar{h}(x) + \frac{1}{N}\sum_{s=1}^{N}\,\min_{l\,\in\,[n_s]}\,h_l^{(s)}(x) \quad \text{(nonconvex)}$$
\begin{figure}
\centering
\includegraphics[width=0.8\textwidth]{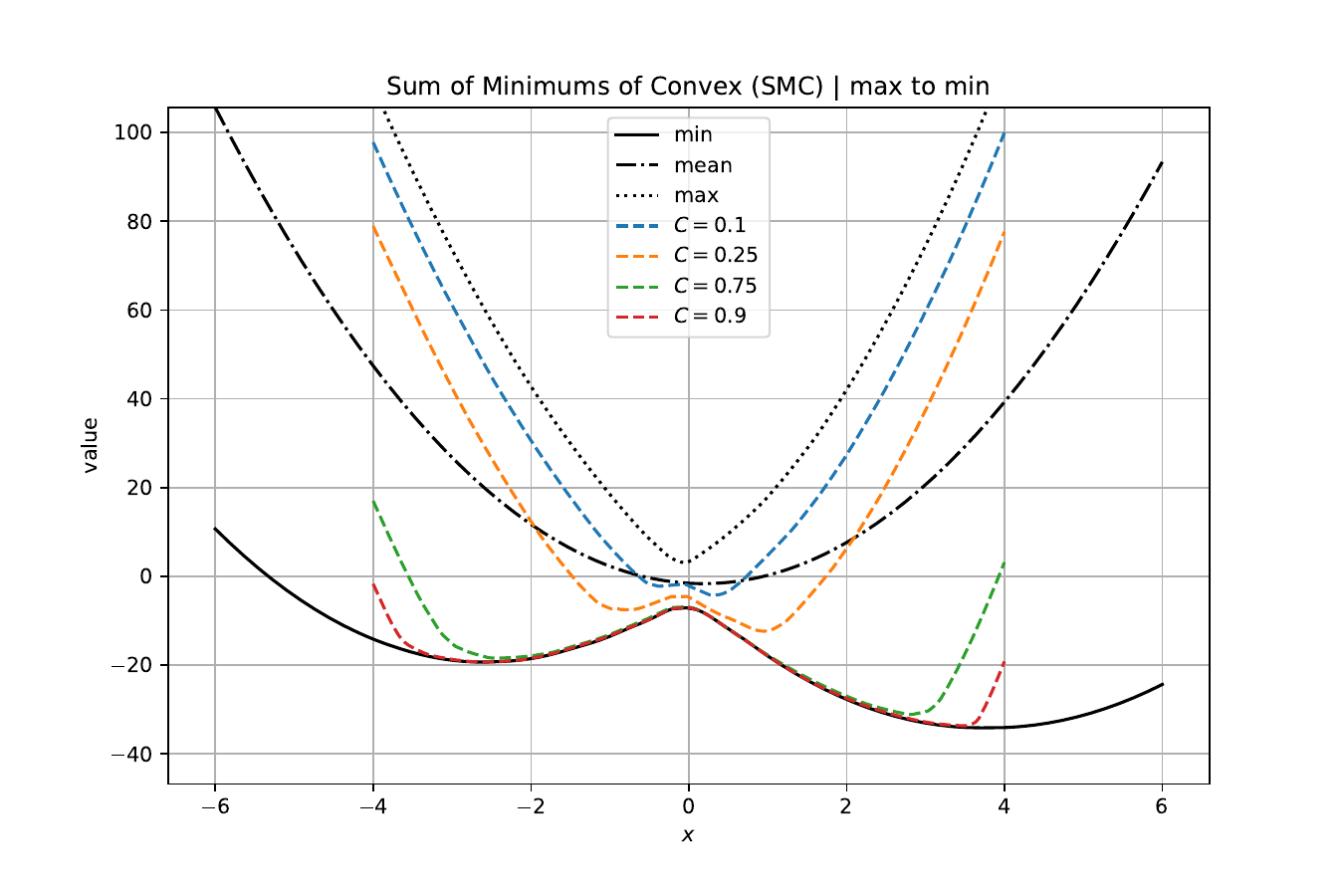}
\vspace{-10pt}
\caption{\emph{value functions} |  $C=0 \rightarrow $ sum of maximums, $C=1 \rightarrow $ sum of minimums}
\label{fig:SMC_interpolation}
\end{figure}
\vspace{-15pt}
\remark On Figure \ref{fig:SMC_interpolation}, all the \emph{component functions} were chosen to be 1D convex quadratics. In our computations of the \emph{value function} $V_C$, we used the non-uniform \emph{big-M} constants as devised in the previous section at bullet point (3). Here, $\mathcal{X} = [-4,4]$.\\In multiple colors, we have displayed the interpolating (from \textbf{max} to \textbf{min}) $V_C$ functions for $C \in \{10^{-1}, 2.5\cdot 10^{-1}, 7.5 \cdot 10^{-1}, 9 \cdot 10^{-1}\}$, all nonconvex in this case.\newpage
\paragraph*{\emph{Perspective} formulation} For the sake of completeness, as a minor contribution, we extend the \emph{perspective} eformulation \cite{Baratt20}, summarized in \eqref{eq:clipped_micp}, when $n_s\in \mathbb{N}$ for every $s\in [N]$. Again, we resort to $n_s$-dimensional vectors of binary variables, i.e. $$q^{(s)} = (q_1^{(s)},\dots,q_{n_s}^{(s)}) \in \{0,1\}^{n_s} \quad \forall s \in[N].$$ We also need $n_s$ copies $\{z^{(s)}_l\}_{l=1}^{n_s}$ of the main variable $x$. According to \eqref{eq:perspective_redef}, imposing 
\vspace{-5pt}$$\sum_{l=1}^{n_s}\,q^{(s)}_{l} = 1 \quad \forall s \in [N]$$
will force, for every $(s,\sigma_s) \in [N] \times [n_s]$ such that $q^{(s)}_{\sigma_s}=1$, the equality $x = z_{\sigma_s}^{(s)}$. On the other hand, $q_{l}^{(s)}=0$ implies that $z^{(s)}_{l} = \mathbf{0}_d$.
Therefore, we write in a compact form 
\begin{align} F^* = \min_{x\,\in\,\mathcal{X}} \quad &\, \frac{1}{N}\,  \sum_{s=1}^N\, \sum_{l=1}^{n_s}\,(\bar{h})^\dagger(x-z_l^{(s)},1-q_l^{(s)})+(\bar{h})^\dagger(z_l^{(s)},q_l^{(s)}) \label{eq:gen_perspective} \\ & +\frac{1}{N}\,  \sum_{s=1}^N\,\sum_{l=1}^{n_s}\, (h_l^{(s)})^\dagger(x-z_l^{(s)},1-q_l^{(s)}) +(h_l^{(s)})^\dagger(z_l^{(s)},q_l^{(s)})\nonumber\\
\textrm{s.t.} \quad & q^{(s)} \in \{0,1\}^{n_s},\hspace{15pt}\sum_{l=1}^{n_s}\,q_l^{(s)} = 1,\quad z^{(s)} \in \mathcal{X}^{n_s} \quad \forall s \in [N]. \nonumber 
\end{align}
\subsection{Local (MICP) Model} \label{subsec:local_micp} In this subsection, we outline how \emph{simpler} the description of the objective $F$ becomes locally. Using continuity arguments, we will show that only terms $h^{(s)}$ for which $s$ belongs to the \emph{degeneracy set} (see Definition \ref{def:degeneracy_set}) need to be expressed using binary variables as in \eqref{eq:epigraph_fund}. This consideration in mind, checking the \emph{local optimality} of a candidate $\hat{x} \in \mathcal{X}$ comes at a (hopefully) reduced computational expense.
\begin{definition}[degeneracy factor \& degeneracy set]\label{def:degeneracy_set} Let $\hat{x} \in \mathcal{X}$. We call the \emph{degeneracy factor} at $\hat{x}$, the size of the set $\mathcal{A}^{(1)}(\hat{x})\times \dots \times \mathcal{A}^{(N)}(\hat{x})$, i.e. $\Pi_{s=1}^{N} \big|\mathcal{A}^{(s)}(\hat{x})\big|$.
In addition, we call \emph{degeneracy set} $I(\hat{x})$ at $\hat{x}$ the set of indices $s\in[N]$ such that $|\mathcal{A}^{(s)}(\hat{x})|\geq 2$, i.e.\@ more than one \emph{component function} is active at $\hat{x}$ with respect to \eqref{eq:active}. 
\end{definition}
\begin{remark} \label{rem:easy_expr}According to Proposition \ref{local_non_expansiveness}, there exists $\alpha>0$ such that for every $s \in [N]$, $\mathcal{A}^{(s)}(x)\subseteq \mathcal{A}^{(s)}(\hat{x})$ for all $x$ belonging to the neighbourhood $\mathcal{X}\,\cap \,\mathbb{B}_2(\hat{x}\,;\,\alpha)$. Then, instead of involving $N$ terms each of which being the pointwise minimum of $n_s$ \emph{components}, the analytical expression of objective $F$ requires only $|I(\hat{x})|$ of them to be pointwise minima of $|\mathcal{A}^{(s)}(\hat{x})| $ \emph{components}. For the other indices, i.e. $s \not \in I(\hat{x})$, $\mathcal{A}^{(s)}(\hat{x})=\{\sigma_s\}$ for some $\sigma_s \in [n_s]$. Indeed, in light of these considerations, we develop
\begin{align}
F(x) &= \bar{h}(x) + \frac{1}{N}\, \sum_{s\, \in\, [N]}\, \min_{l\, \in \,\mathcal{A}^{(s)}(x)}\, h^{(s)}_l(x)\nonumber \\ &=\bar{h}(x)+\frac{1}{N}\,\Bigg( \sum_{s\in [N]\backslash I(\hat{x})}\, h^{(s)}_{\sigma_s}(x)\Bigg)+\frac{1}{N}\,\Bigg( \sum_{s \in I(\hat{x})}\, \min_{l\, \in \,\mathcal{A}^{(s)}(\hat{x})}\, h^{(s)}_l(x)\Bigg). \label{eq:objective_reduced_0}
\end{align}
\end{remark}

\noindent Building upon our \emph{big-M} (MICP) reformulation \eqref{eq:new_min_problem} and \eqref{eq:objective_reduced_0}, we prove a first lemma.
\begin{lemma}[\emph{local optimality}: equivalent conditions] \label{lemma_lo_smc}Let $\hat{\mathcal{X}}(\hat{\alpha}) := \mathcal{X} \cap \mathbb{B}_2(\hat{x}\, ;\, \hat{\alpha})$ for any $\hat{\alpha}>0$ and $\hat{x} \in \mathcal{X}$.  The point $\hat{x}$ is a local minimizer of \eqref{eq:min_problem} if and only if there exists $\hat{\alpha}>0$ such that $F(\hat{x})=\min_{x\,\in\,\hat{\mathcal{X}}(\hat{\alpha})}\,\hat{F}(x|\hat{x})$ where
\begin{align}
\hat{F}(x|\hat{x}):= \min_{\eta\, \in\, \mathbb{R}^N} \quad & \bar{h}(x) + \frac{1}{N}\sum_{s \in [N]\backslash I(\hat{x})}\,h^{(s)}_{\sigma_s}(x) + \frac{1}{N}\sum_{s\,\in\,I(\hat{x})}\,\eta^{(s)} & \label{eq:new_min_problem_local}
 \\
\hspace{-40pt} \forall s \in I(\hat{x}),\quad & t^{(s)} \in \{0,1\}^{n_s},\hspace{15pt}\sum_{l=1}^{n_s}\,t_l^{(s)} = 1&\nonumber  \\
 &  h^{(s)}_{l_+}(x) \leq \eta^{(s)} + \sum_{l=1}^{n_s}\,t_l^{(s)}\,M_{l_+,l}^{(s)} & \forall l_+ \in \mathcal{A}^{(s)}(\hat{x}) \nonumber \\
 & t^{(s)}_{l_+} = 0 & \forall l_+ \not \in \mathcal{A}^{(s)}(\hat{x})\nonumber
\end{align}
\label{lemma_equivalence_local_optimality}
where $\mathcal{A}^{(s)}(\hat{x}) = \{\sigma_s\}$ for every $s \in [N]\backslash I(\hat{x})$.
\end{lemma}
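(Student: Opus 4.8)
The plan is to reduce the claimed equivalence to a single structural fact: on a sufficiently small ball around $\hat{x}$, the auxiliary value function $\hat{F}(\cdot\,|\,\hat{x})$ defined in \eqref{eq:new_min_problem_local} coincides exactly with $F$. Once this is in hand, the characterization of local optimality follows directly from Definition \ref{def:locopt} by intersecting neighbourhoods. I would therefore begin by invoking Proposition \ref{local_non_expansiveness} with $\rho=0$ to fix a radius $\alpha>0$ such that $\mathcal{A}^{(s)}(x)\subseteq \mathcal{A}^{(s)}(\hat{x})$ for every $s\in[N]$ and every $x\in\hat{\mathcal{X}}(\alpha)$, and work exclusively inside this ball.

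First I would recover the reduced expression \eqref{eq:objective_reduced_0} for $F$. Fix $x\in\hat{\mathcal{X}}(\alpha)$ and $s\in[N]$. Since $\mathcal{A}^{(s)}(x)\subseteq\mathcal{A}^{(s)}(\hat{x})\subseteq[n_s]$ and the minimum defining $h^{(s)}$ is attained exactly on $\mathcal{A}^{(s)}(x)$, the sandwich $h^{(s)}(x)=\min_{l\in\mathcal{A}^{(s)}(x)}h^{(s)}_l(x)\ge \min_{l\in\mathcal{A}^{(s)}(\hat{x})}h^{(s)}_l(x)\ge \min_{l\in[n_s]}h^{(s)}_l(x)=h^{(s)}(x)$ forces $h^{(s)}(x)=\min_{l\in\mathcal{A}^{(s)}(\hat{x})}h^{(s)}_l(x)$. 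For $s\notin I(\hat{x})$ this collapses to $h^{(s)}(x)=h^{(s)}_{\sigma_s}(x)$, and summing over $s$ yields exactly \eqref{eq:objective_reduced_0}, as anticipated in Remark \ref{rem:easy_expr}.

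The core step is to show that the restricted big-M subproblem \eqref{eq:new_min_problem_local} evaluates, term by term, to this reduced objective. For each $s\in I(\hat{x})$ the constraints $t^{(s)}_{l_+}=0$ for $l_+\notin\mathcal{A}^{(s)}(\hat{x})$ confine both the selection and the active inequalities to indices in $\mathcal{A}^{(s)}(\hat{x})$, so the optimal $\eta^{(s)}$ is the smallest value admitting a selection $t^{(s)}$ supported on $\mathcal{A}^{(s)}(\hat{x})$ that satisfies every inequality of \eqref{eq:new_min_problem_local}. I would prove this optimal $\eta^{(s)}$ equals $\min_{l\in\mathcal{A}^{(s)}(\hat{x})}h^{(s)}_l(x)$ by replaying the derivation of \eqref{eq:epigraph_fund} with $[n_s]$ replaced by $\mathcal{A}^{(s)}(\hat{x})$. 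For the lower bound, if $t^{(s)}_{\sigma'}=1$ then the $l_+=\sigma'$ inequality together with $M^{(s)}_{\sigma',\sigma'}=0$ gives $\eta^{(s)}\ge h^{(s)}_{\sigma'}(x)\ge\min_{l\in\mathcal{A}^{(s)}(\hat{x})}h^{(s)}_l(x)$, exactly as in \eqref{eq:first_strong_implication}. For the upper bound, I would pick a minimizer $l^\ast\in\arg\min_{l\in\mathcal{A}^{(s)}(\hat{x})}h^{(s)}_l(x)$ and set $t^{(s)}_{l^\ast}=1$, $\eta^{(s)}=h^{(s)}_{l^\ast}(x)$; by the preceding paragraph $h^{(s)}_{l^\ast}(x)=h^{(s)}(x)$, hence $l^\ast\in\mathcal{A}^{(s)}(x)$, so inequality \eqref{eq:non_restrictive} applies with $l=l^\ast$ and certifies feasibility of every remaining constraint. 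Combining the two bounds with \eqref{eq:objective_reduced_0} yields $\hat{F}(x\,|\,\hat{x})=F(x)$ for all $x\in\hat{\mathcal{X}}(\alpha)$, noting in particular $\hat{F}(\hat{x}\,|\,\hat{x})=F(\hat{x})$.

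Finally I would close the equivalence. If $\hat{x}$ is a local minimizer, Definition \ref{def:locopt} provides $\alpha_0>0$ with $F(x)\ge F(\hat{x})$ on $\hat{\mathcal{X}}(\alpha_0)$; taking $\hat\alpha=\min\{\alpha,\alpha_0\}$ and using $\hat{F}(\cdot\,|\,\hat{x})=F$ on $\hat{\mathcal{X}}(\hat\alpha)$ gives $F(\hat{x})=\min_{x\in\hat{\mathcal{X}}(\hat\alpha)}\hat{F}(x\,|\,\hat{x})$. Conversely, given such an $\hat\alpha$, set $\hat\alpha'=\min\{\alpha,\hat\alpha\}$; on $\hat{\mathcal{X}}(\hat\alpha')$ we have $F(x)=\hat{F}(x\,|\,\hat{x})\ge \min_{x\in\hat{\mathcal{X}}(\hat\alpha)}\hat{F}(x\,|\,\hat{x})=F(\hat{x})$, so $\hat{x}$ minimizes $F$ on $\hat{\mathcal{X}}(\hat\alpha')$ and is a local minimizer. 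I expect the main obstacle to be the upper-bound half of the core step: certifying that the index $l^\ast$ minimizing over $\mathcal{A}^{(s)}(\hat{x})$ is genuinely active at $x$, so that the $\mathcal{S}$-bound $M^{(s)}_{l_+,l^\ast}$, which Definition \ref{def:s_bounds} guarantees only where $l^\ast$ is active, may legitimately be invoked in \eqref{eq:non_restrictive}. This is precisely where active-set stability (Proposition \ref{local_non_expansiveness}) is indispensable, and it is what dictates shrinking every neighbourhood to radius at most $\alpha$.
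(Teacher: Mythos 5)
Your proof is correct and follows essentially the same route as the paper's: establish via active-set stability (Proposition \ref{local_non_expansiveness}) that $\hat{F}(\cdot|\hat{x})$ coincides with $F$ on a small enough ball, then obtain both implications by intersecting radii. You merely spell out in more detail the step the paper delegates to Remark \ref{rem:easy_expr} and the epigraph trick \eqref{eq:epigraph_fund}, correctly identifying that the $\mathcal{S}$-bound $M^{(s)}_{l_+,l^\ast}$ may only be invoked because $l^\ast$ is genuinely active at $x$.
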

\begin{proof} Remark \ref{rem:easy_expr} states that there exists $\alpha>0$ so that $F$ can be expressed as in \eqref{eq:objective_reduced_0} for any $0<\hat{\alpha}\leq\alpha$, locally on $\hat{\mathcal{X}}(\hat{\alpha})$. To obtain the formulation at the right hand side of \eqref{eq:new_min_problem_local}, we apply our \emph{epigraph} reformulation trick \eqref{eq:epigraph_fund} to every term $s\in I(\hat{x})$ in order to rewrite the constraints\vspace{-15pt}
$$\vspace{-10pt}\min_{l\, \in\, \mathcal{A}^{(s)}(\hat{x})}\,h^{(s)}_{l}(x) \leq \eta^{(s)}$$
while when $s\not \in I(\hat{x})$, one just needed to set $h^{(s)}(x) = h_{\sigma_s}^{(s)}(x)$ since $\mathcal{A}^{(s)}(\hat{x})=\{\sigma_s\}$.
\\\vspace{-5pt} \\ $(\Rightarrow)$ Let $\hat{x}$ be a \emph{local minimizer} of $F$ then there exists $\bar{\alpha}>0$ such that $F(\hat{x}) = \min_{x \in \hat{\mathcal{X}}(\bar{\alpha})}\,F(x)$. Let $\hat{\alpha} = \min\{\bar{\alpha},\alpha\}$. Thereby, $\hat{x}$ is also the minimum of $F$ over $\hat{\mathcal{X}}(\hat{\alpha})$. On $\hat{\mathcal{X}}(\hat{\alpha})$, $\hat{F}(\cdot|\hat{x})$ and $F$ coincide so that equality \eqref{eq:new_min_problem_local} holds. \\ \vspace{-5pt} \\ $(\Leftarrow)$ If $\hat{\alpha}>0$ exists so that $F(\hat{x})=\min_{x\,\in\,\hat{\mathcal{X}}(\hat{\alpha})}\,\hat{F}(x|\hat{x})$ then for any $\bar{\alpha} \in (0,\min\{\alpha,\hat{\alpha}\}]$, $\hat{x}$ will remain a \emph{global minimum} of $\hat{F}(\cdot|\hat{x}) = F$ over $\hat{\mathcal{X}}(\hat{\alpha})$. 
\end{proof}
\noindent Although interesting theoretically speaking, one can not try every $\hat{\alpha}>0$ in Lemma \ref{lemma_equivalence_local_optimality} to prove or disprove the \emph{local optimality} of a candidate $\hat{x}\in \mathcal{X}$. Nevertheless, fixing a suitable neighbourhood $\mathcal{S}$ around $\hat{x}$, depending on the geometry of the problem, and solving the problem at the right hand side of \eqref{eq:new_min_problem_local} will span two different outcomes both from which one can learn something. We detail this formally in the next corollary. 
\begin{corollary} \label{coro:restart_check} Let $\hat{\alpha}>0$ be fixed in Lemma \ref{lemma_lo_smc} and $\mathcal{S}\supseteq \mathcal{X} \,\cap\,\mathbb{B}_2(\hat{x}\, ;\, \hat{\alpha})$ and   
\begin{equation}
\hat{F}^*_{\hat{x},\mathcal{S}} = \min_{x\,\in\,\mathcal{S}}\,\hat{F}(x|\hat{x}). \label{eq:new_min_problem_local_2}
\end{equation}
Let $x^*(\hat{x},\mathcal{S})$ denote a minimizer of \eqref{eq:new_min_problem_local_2}. Either $F(\hat{x}) = \hat{F}_{\hat{x},\mathcal{S}}^*$ and $\hat{x}$ is a local minimizer of \eqref{eq:min_problem} or $F(x^*(\hat{x},\mathcal{S}))\leq \hat{F}^*_{\hat{x},\mathcal{S}}<F(\hat{x})$. 
\end{corollary}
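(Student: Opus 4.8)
The plan is to reduce the whole statement to three elementary observations about the auxiliary function $\hat{F}(\cdot|\hat{x})$ and then split on whether $\hat{F}^*_{\hat{x},\mathcal{S}}$ equals $F(\hat{x})$ or is strictly below it. The first observation I would record is a \emph{pointwise majorization}: for every $x \in \mathcal{X}$, one has $F(x) \leq \hat{F}(x|\hat{x})$. This follows by carrying out the inner epigraph minimization in \eqref{eq:new_min_problem_local}, which (exactly as in the derivation of \eqref{eq:epigraph_fund}) collapses each degenerate term to a pointwise minimum over its active components, giving the closed form $\hat{F}(x|\hat{x}) = \bar{h}(x) + \frac{1}{N}\sum_{s \notin I(\hat{x})} h^{(s)}_{\sigma_s}(x) + \frac{1}{N}\sum_{s \in I(\hat{x})} \min_{l \in \mathcal{A}^{(s)}(\hat{x})} h^{(s)}_l(x)$. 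Comparing this term by term with $F(x) = \bar{h}(x) + \frac{1}{N}\sum_{s} \min_{l \in [n_s]} h^{(s)}_l(x)$ and using that both $\{\sigma_s\}$ and $\mathcal{A}^{(s)}(\hat{x})$ are subsets of $[n_s]$ — so that restricting the inner minimum to a smaller index set can only increase it — yields the inequality. The second observation is the \emph{base-point equality} $F(\hat{x}) = \hat{F}(\hat{x}|\hat{x})$, immediate from Remark \ref{rem:easy_expr} since at $\hat{x}$ the active indices are precisely $\mathcal{A}^{(s)}(\hat{x})$ and the two expressions coincide.

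Next I would establish the \emph{universal bound} $\hat{F}^*_{\hat{x},\mathcal{S}} \leq F(\hat{x})$. Because $\hat{x} \in \mathcal{X}\cap\mathbb{B}_2(\hat{x};\hat{\alpha}) = \hat{\mathcal{X}}(\hat{\alpha}) \subseteq \mathcal{S}$, the point $\hat{x}$ is admissible in the minimization \eqref{eq:new_min_problem_local_2}, whence $\hat{F}^*_{\hat{x},\mathcal{S}} \leq \hat{F}(\hat{x}|\hat{x}) = F(\hat{x})$. This bound leaves exactly two mutually exclusive possibilities, $\hat{F}^*_{\hat{x},\mathcal{S}} = F(\hat{x})$ or $\hat{F}^*_{\hat{x},\mathcal{S}} < F(\hat{x})$, which I would treat as the two branches of the claimed dichotomy.

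In the branch $\hat{F}^*_{\hat{x},\mathcal{S}} < F(\hat{x})$, I would simply instantiate the pointwise majorization at the minimizer: $F(x^*(\hat{x},\mathcal{S})) \leq \hat{F}(x^*(\hat{x},\mathcal{S})|\hat{x}) = \hat{F}^*_{\hat{x},\mathcal{S}} < F(\hat{x})$, which is the second alternative verbatim. In the branch $\hat{F}^*_{\hat{x},\mathcal{S}} = F(\hat{x})$, the task is to upgrade value equality into local optimality via Lemma \ref{lemma_lo_smc}. Using monotonicity of the minimum under the inclusion $\hat{\mathcal{X}}(\hat{\alpha}) \subseteq \mathcal{S}$ together with admissibility of $\hat{x}$, I would sandwich $F(\hat{x}) = \hat{F}^*_{\hat{x},\mathcal{S}} \leq \min_{x \in \hat{\mathcal{X}}(\hat{\alpha})} \hat{F}(x|\hat{x}) \leq \hat{F}(\hat{x}|\hat{x}) = F(\hat{x})$, forcing $\min_{x \in \hat{\mathcal{X}}(\hat{\alpha})} \hat{F}(x|\hat{x}) = F(\hat{x})$; the $(\Leftarrow)$ direction of Lemma \ref{lemma_lo_smc} then certifies that $\hat{x}$ is a local minimizer of \eqref{eq:min_problem}, giving the first alternative. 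The one point requiring care is feasibility bookkeeping: the argument needs $x^*(\hat{x},\mathcal{S}) \in \mathcal{X}$ so that $F(x^*(\hat{x},\mathcal{S}))$ is the genuine objective and the majorization applies. This is inherited from the fact that \eqref{eq:new_min_problem_local_2} carries the constraint $x \in \mathcal{X}$ exactly as the global model \eqref{eq:new_min_problem} does, and that $\hat{\alpha}$ is taken small enough (as demanded in Lemma \ref{lemma_lo_smc}) for the local reduction \eqref{eq:objective_reduced_0} underlying $\hat{F}(\cdot|\hat{x})$ to be valid; I expect this to be the only genuinely delicate step, the rest being the two short chains of inequalities above.
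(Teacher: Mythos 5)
Your proof is correct and follows essentially the same route as the paper's: the pointwise majorization $F(x)\leq\hat{F}(x|\hat{x})$ obtained by shrinking the index sets of the inner minima, the base-point equality $\hat{F}(\hat{x}|\hat{x})=F(\hat{x})$, and the case split on whether $\hat{F}^*_{\hat{x},\mathcal{S}}$ equals or lies strictly below $F(\hat{x})$, with Lemma \ref{lemma_lo_smc} invoked in the equality branch. Your explicit verification that $\hat{F}^*_{\hat{x},\mathcal{S}}\leq F(\hat{x})$ (so the dichotomy is exhaustive) is a small point the paper leaves implicit, but the argument is the same.
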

\begin{proof}
In any case, $\hat{F}(\hat{x}|\hat{x})=F(\hat{x})$ and for every $x \in \mathcal{S}$, \begin{equation} \hat{F}(x|\hat{x}) = \min_{\sigma \in\,\bigtimes_{s=1}^{N}\,\mathcal{A}^{(s)}(\hat{x})}\,F_{\sigma}(x) \geq F(x) =  \min_{\sigma \in\,\bigtimes_{s=1}^{N}\,[n_s]}\,F_{\sigma}(x). \label{eq:over_approx} \end{equation} If $\hat{F}^*_{\hat{x},\mathcal{S}} = F(\hat{x})$ then it is clear that $F(\hat{x}) = \min_{x\,\in\,\mathcal{X}\,\cap\,\mathbb{B}_2(\hat{x}\,;\,\hat{\alpha})}\,\hat{F}(x|\hat{x})$ and Lemma \ref{lemma_lo_smc} applies. Otherwise, $\hat{F}(\hat{x}|\hat{x})<F(\hat{x})$ and $F(x^*(\hat{x},\mathcal{S}))\leq \hat{F}^*_{\hat{x},\mathcal{S}}$ according to \eqref{eq:over_approx}. 
\end{proof}
 \remark Optimization problem in \eqref{eq:new_min_problem_local_2} involves $Z \in \mathbb{N}$ binary variables\footnote{Dummy binary variables set to $0$, i.e. $t^{(s)}_{l_+}$ for every $l_+ \not \in \mathcal{A}^{(s)}(\hat{x})$ with $s \in I(\hat{x})$, are not counted.}, \begin{equation} Z=\sum_{s\,\in\,I(\hat{x})}\,\big|\mathcal{A}^{(s)}(\hat{x})\big|.\label{eq:reduced_bin} \end{equation} Yet, among the \emph{possible} assignments for these variables only $\Pi_{s=1}^N\, \big|\mathcal{A}^{(s)}(\hat{x})\big|<2^Z$ are \emph{feasible} and worth consideration. $\Pi_{s=1}^N\, \big|\mathcal{A}^{(s)}(\hat{x})\big|$ is nothing else than the \emph{degeneracy factor} at $\hat{x}$, showing the intrinsic link between the hardness of certifying \emph{local optimality} and this specific number. In the worst-case, this latter could be as large as $\Pi_{s=1}^N\, n_s$. As we observed in our experiments, this is far from being true in practice. 
\\
  
 \noindent Our true motivation to express $\hat{F}(\cdot|\hat{x})$ as above \eqref{eq:new_min_problem_local} is to take advantage of state-of-the-art (MICP) solvers, e.g. \texttt{MOSEK} \cite{Mosek24}, \texttt{BARON} \cite{Baron24}, that might compute a minimizer of \eqref{eq:new_min_problem_local_2} faster than a full combinatorial enumeration that we recall now
\begin{equation}
\hat{F}_{\hat{x},\mathcal{S}}^* = \min_{\sigma \,\in\,\bigtimes_{s=1}^{N}\,\mathcal{A}^{(s)}(\hat{x})}\,\Bigg\{\min_{x\,\in\,\mathcal{S}}\,\bar{h}(x) + \frac{1}{N}\,\sum_{s=1}^{N}\,h^{(s)}_{\sigma_s}(x)\Bigg\}
\label{eq:enumeration_local}
\vspace{-2pt}
\end{equation}
 Yet, if the \emph{degeneracy factor} at $\hat{x}$ is small enough, it might be more convenient to get $\hat{F}_{\hat{x},\mathcal{S}}^*$ in a straightforward enumerative fashion since (MICP) formulations require, under the hood, additional computational work (e.g. \emph{canonicalization}, \emph{factorizations}, etc.). This is especially true when every $\sigma$-selection problem defined by
 $$ \vspace{-5pt}\nu_{|\mathcal{S}}(\sigma) := \min_{x\,\in\,\mathcal{S}}\,\bar{h}(x) + \frac{1}{N}\,\sum_{s=1}^{N}\,h^{(s)}_{\sigma_s}(x)$$
 admits a closed-form expression. Therefore, when resorting to Corollary \ref{coro:restart_check} in practice, one should first question the time it would take to adopt the approach of \eqref{eq:enumeration_local}. If it is presumably too long then one should launch a global solver to compute $\hat{F}^*_{\hat{x},\mathcal{S}}$.
 \begin{remark} \label{rem:choose_s_bounds}
Let $\hat{\mathcal{S}} \supseteq \mathbb{B}_2(\hat{x}\,;\,\hat{\alpha})$ so that $\mathcal{S}= \mathcal{X} \,\cap\,\hat{\mathcal{S}} \supseteq \mathcal{X}\, \cap \,\mathbb{B}_2(\hat{x}\, ;\, \hat{\alpha})$ in Corollary \ref{coro:restart_check}. One should, if possible, use refined $\hat{\mathcal{S}}$-bounds (Definition \ref{def:s_bounds}) instead of the assumed $\mathcal{X}$-bounds (Assumption \ref{A2}), when building the model of $\hat{F}(\cdot|\hat{x})$, i.e. \eqref{eq:new_min_problem_local}, involved in the problem \eqref{eq:new_min_problem_local_2}. The choice of $\hat{\mathcal{S}}$ significantly affects the practical solvability of $F^*_{\hat{x}, \mathcal{S}}$. We illustrate this procedure in Section \ref{subsec:lo_certif} with two numerical examples for which, fortunately, $\hat{\mathcal{S}}$-bounds are available in closed-form. 
\end{remark}
\vspace{-15pt}
\subsection{Global (BIC) Model} \label{subsec:global_bic}
We proceed to the second global reformulation of \eqref{eq:min_problem}, this latter being of utmost practical interest.
Every discrete minimum among $\bar{n} \in \mathbb{N}$ elements from a vector $\textbf{h} = (h_1, \dots, h_{\bar{n}})$ satisfies the relationship 
\begin{equation}
\min_{l\,\in\,[\bar{n}]}\,h_l = \min_{q \,\in \,\Delta^{\bar{n}}}\, q^T \textbf{h}. 
\label{eq:reformulation_min_to_BC}
\end{equation}

\noindent Based on \eqref{eq:reformulation_min_to_BC}, looking through another lens, \eqref{eq:min_problem} is equivalent to
\begin{equation}
    F^* = \min_{x \,\in\, \mathcal{X},\,Q\,\in\, \mathcal{Q}}\hspace{3pt}\Bigg\{\bar{F}(x,Q):= \bar{h}(x) + \frac{1}{N}\,\sum_{s=1}^{N}\,\sum_{l=1}^{n_s}\,q_l^{(s)}h_{l}^{(s)}(x)\Bigg\} 
 \label{eq:big_smc}
\tag{BIC-SMC}
\end{equation}
with $\mathcal{Q} = \Delta^{n_1}\times\dots\times \Delta^{n_N}$ and $Q= (q^{(1)},\dots,q^{(N)})$.
Fixing the values of \emph{component weights} $\{q^{(s)}\}_{s=1}^N$ in their respective simplices, the problem \eqref{eq:upper_rel} becomes convex
\begin{equation}
\bar{F}^*_{|Q} = \min_{\tilde{x} \in \mathcal{X}} \bar{F}(\tilde{x},Q).
\label{eq:upper_rel}
\end{equation} 
Conversely, fixing $x \in \mathcal{X}$ and minimizing with respect to $Q$ represents an (LP)
\begin{equation}
\bar{F}^*_{|x} = \min_{\tilde{Q}\, \in\, \mathcal{Q}} \bar{F}(x,\tilde{Q}).
\label{eq:upper_rel_2}
\end{equation} 
Considering the notations above, we can finally write 
$$\min_{x\, \in\, \mathcal{X}}\,\bar{F}^*_{|x} = F^* = \min_{Q \,\in\, \mathcal{Q}}\,\bar{F}^*_{|Q}.$$
Together, equations \eqref{eq:upper_rel} and \eqref{eq:upper_rel_2} undoubtedly suggest a two-step alternating minimization (\texttt{AM}) \cite{Grippo00} map $\mathcal{M} : \mathcal{X} \times \mathcal{Q} \to \mathcal{X} \times \mathcal{Q},$ $(x,Q)\to \mathcal{M}(x,Q) = (x_+,Q_+)$  
\begin{equation}
\begin{cases}
x_+& \gets \arg \min_{\tilde{x} \in\mathcal{X}} \bar{F}(\tilde{x},Q) \\
Q_+& \gets \arg \min_{\tilde{Q} \in \mathcal{Q}} \bar{F}(x_+,\tilde{Q}) 
\end{cases}
\label{eq:am_spec}
\tag{\texttt{AM}}
\end{equation}
for which one looks after \emph{fixed-value pairs}, i.e. $x \in \mathcal{X}$ and $Q\in\mathcal{Q}$ such that $\bar{F}(x,Q) = \bar{F}(\mathcal{M}(x,Q))$. Let $x^*$ be a minimizer of \eqref{eq:min_problem} and let $Q^* \in \arg \min_{\tilde{Q}\,\in\,\mathcal{Q}}\,\bar{F}(x^*,\tilde{Q})$, then $(x^*,Q^*)$ must be a \emph{fixed-value pair} of $\mathcal{M}$, easily deduced from the fact that $\mathcal{M}$ decreases $\bar{F}$ monotonically. Actually, we will see in a while that being a \emph{fixed-value pair} $(x,Q)$ is sufficient to prove that $x$ is \emph{critical} for \eqref{eq:min_problem}. However, as we expose in Proposition \ref{lemma:criticality}, \emph{criticality} requires even less stringent conditions in general. To that end, we need to define our notion of \emph{gain} (when optimizing the \emph{weights}).
\begin{definition}[Gain] Let $Q \in \mathcal{Q}$ and $x_+ \in \arg \min_{\tilde{x} \in \mathcal{X}}\,\bar{F}(\tilde{x},Q)$. The \emph{gain} at $x_+$ is the positive quantity defined as \begin{equation} \mathcal{G}^*_{(x_+, Q)} = \bar{F}(x_+, Q) - \min_{\tilde{Q}\, \in\, \mathcal{Q}}\, \bar{F}(x_+, \tilde{Q})\geq 0.\label{def:gain_x} \end{equation}
\end{definition}
\begin{proposition} \label{lemma:criticality}
If $\mathcal{G}^*_{(x_+,Q)} = 0$ in \eqref{def:gain_x} then $x_+$ is a critical point for \eqref{eq:min_problem}.
\end{proposition}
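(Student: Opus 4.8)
The plan is to translate the condition $\mathcal{G}^*_{(x_+,Q)}=0$ into a statement about the \emph{support} of the weights $Q$, and then convert the convex optimality of $x_+$ for $\bar{F}(\cdot,Q)$ into the (DC) criticality condition of Definition~\ref{def:cp} built on the decomposition \eqref{eq:DC_decomposition}, in which I write $f_2=\frac{1}{N}\sum_{s=1}^{N}g^{(s)}$ with $g^{(s)}(x):=\max_{\tilde{l}\in[n_s]}\sum_{l\neq\tilde{l}}h^{(s)}_l(x)$.

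First I would characterize a vanishing gain. Using \eqref{eq:reformulation_min_to_BC} termwise, $\min_{\tilde{Q}\in\mathcal{Q}}\bar{F}(x_+,\tilde{Q})=\bar{h}(x_+)+\frac{1}{N}\sum_s\min_l h^{(s)}_l(x_+)=F(x_+)$, so by \eqref{def:gain_x} the condition $\mathcal{G}^*_{(x_+,Q)}=0$ is equivalent to $\bar{F}(x_+,Q)=F(x_+)$, i.e. $\sum_l q^{(s)}_l h^{(s)}_l(x_+)=\min_l h^{(s)}_l(x_+)$ for every $s$. Since each $q^{(s)}\in\Delta^{n_s}$, the weighted average is always at least the minimum, so equality forces $q^{(s)}$ to be supported on the active set $\mathcal{A}^{(s)}(x_+)$, i.e. $q^{(s)}_l>0\Rightarrow h^{(s)}_l(x_+)=h^{(s)}(x_+)$. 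This support property is the single fact I carry forward.

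Next, because $\bar{F}(\cdot,Q)$ is convex in $x$ (a nonnegative combination of the convex components plus $\bar{h}$) and $x_+$ minimizes it over $\mathcal{X}$, the first-order optimality condition reads $\mathbf{0}_d\in\partial_x\bar{F}(x_+,Q)+\mathcal{N}_{\mathcal{X}}(x_+)$. By the Moreau--Rockafellar sum rule I may select $g_{\bar{h}}\in\partial\bar{h}(x_+)$, subgradients $g^{(s)}_l\in\partial h^{(s)}_l(x_+)$ and $\hat{v}\in\mathcal{N}_{\mathcal{X}}(x_+)$ with $\hat{v}+g_{\bar{h}}+\frac{1}{N}\sum_s\sum_l q^{(s)}_l g^{(s)}_l=\mathbf{0}_d$. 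The crux is then to exhibit $g_1\in\partial f_1(x_+)$ and $g_2\in\partial f_2(x_+)$ realizing this same vector as $g_1-g_2$. I would take $g_1=g_{\bar{h}}+\frac{1}{N}\sum_s\sum_l g^{(s)}_l\in\partial f_1(x_+)$ and set $g_2=\frac{1}{N}\sum_s\sum_l(1-q^{(s)}_l)g^{(s)}_l$, so that $g_1-g_2=g_{\bar{h}}+\frac{1}{N}\sum_s\sum_l q^{(s)}_l g^{(s)}_l=-\hat{v}$ by construction. It remains to check $g_2\in\partial f_2(x_+)$, i.e. that each block $\gamma^{(s)}:=\sum_l(1-q^{(s)}_l)g^{(s)}_l$ lies in $\partial g^{(s)}(x_+)$. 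Since the branch maximizing $\sum_{l\neq\tilde{l}}h^{(s)}_l$ is the one dropping a minimal component, the active branches of $g^{(s)}$ at $x_+$ are exactly $\tilde{l}\in\mathcal{A}^{(s)}(x_+)$; rewriting $\gamma^{(s)}=\sum_{\tilde{l}}q^{(s)}_{\tilde{l}}\sum_{l\neq\tilde{l}}g^{(s)}_l$ (valid as $\sum_{\tilde{l}}q^{(s)}_{\tilde{l}}=1$), the support property from the vanishing gain places all nonzero weights on active branches, exhibiting $\gamma^{(s)}$ as a convex combination of active-branch subgradients, hence an element of $\partial g^{(s)}(x_+)$. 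Summing over $s$ yields $g_2\in\partial f_2(x_+)$, and $\hat{v}+g_1-g_2=\mathbf{0}_d$ is precisely \eqref{eq:critical_DC}, so $x_+$ is critical.

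I expect the main obstacle to be the last verification: matching the convex-combination structure of the subdifferential of the max-of-sums $g^{(s)}$ with the weights $q^{(s)}$, and recognizing that the support condition $q^{(s)}_l>0\Rightarrow l\in\mathcal{A}^{(s)}(x_+)$ is exactly what renders $\gamma^{(s)}$ admissible as a subgradient. A secondary care point is the validity of the subdifferential sum and max calculus, which I would justify from the components being finite-valued convex functions on the relevant neighbourhood, so that the relative-interior qualifications hold.
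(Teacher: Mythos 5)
Your proposal is correct and follows essentially the same route as the paper's proof in Appendix B: vanishing gain forces $q^{(s)}$ to be supported on $\mathcal{A}^{(s)}(x_+)$, the first-order conditions for the convex subproblem give $\hat{v}+\bar{g}+\frac{1}{N}\sum_s\sum_l q^{(s)}_l g^{(s)}_l=\mathbf{0}_d$, and the split into $g_1\in\partial f_1(x_+)$ and $g_2\in\partial f_2(x_+)$ is algebraically identical to the paper's (your $\sum_l(1-q^{(s)}_l)g^{(s)}_l$ equals their $\sum_{\tilde{l}\in\mathcal{A}^{(s)}(x_+)}q^{(s)}_{\tilde{l}}\sum_{l\neq\tilde{l}}g^{(s)}_l$ after the rewriting you carry out). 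Your explicit verification that each block is a convex combination of active-branch subgradients of the max-of-sums is a welcome elaboration of a step the paper only asserts.
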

\begin{proof}
This proof is deferred to Appendix \ref{crit_proof}.
\end{proof}
\remark \label{rem_false_converse} We crafted a simple toy example ($N=1$, $n_1=3$, $d=1$) showing the converse is not true. Consider the function $x \to F(x) = |x| + \min\{x-1/8,x^2,2x-1/16\}$, weights $Q= (0,1,0)$ (e.g. encoding \emph{active components} at $x=1/2$) and $\mathcal{X} = [-2,2]$. One easily obtains that $x_+ = 0 = \arg \min_{\tilde{x}\,\in\,[-2,2]}\,|\tilde{x}|+\tilde{x}^2$ is \emph{critical} in this context. According to the (DC) decomposition \eqref{eq:DC_decomposition}, $\partial f_1(x_+) = [2,4]$ and $\partial f_2(x_+) = \{2\} $.\\ Yet, the \emph{gain} is strictly positive with respect to $\bar{F}(x_+,Q)=0$ in the sense that $$F(x_+)= \min_{\tilde{Q} \in \Delta^3}\,\bar{F}(x_+,\tilde{Q})=-1/8 < 0\vspace{-10pt}$$ achieved for $\tilde{Q} = (1,0,0)$. Note that $x_+$ is not globally optimal since $F^* = -33/16$.  \\ \\
 \hspace{-17pt} Equation \eqref{eq:decrease_decomposition} tells explicitly that \emph{fixed-value pairs} of $\mathcal{M}$ produce \emph{critical points} $x_+$,
\begin{equation}\bar{F}(x,Q)-\bar{F}(\mathcal{M}(x,Q))  = \underbrace{(\bar{F}(x,Q)-\bar{F}(x_+,Q))}_{\geq 0} + \underbrace{(\bar{F}(x_+,Q)-\bar{F}(x_+,Q_+))}_{=\,\mathcal{G}^*_{(x_+,Q)}}
\label{eq:decrease_decomposition} \end{equation} due to the implication $\bar{F}(x,Q) - \bar{F}(\mathcal{M}(x,Q))=0\Rightarrow \mathcal{G}^*_{(x_+,Q)}=0$ and Proposition \ref{lemma:criticality}.\\ In Section \ref{sec:RAM} hereafter, we will ensure the sufficient decrease of our relaxed alternating minimization approaches (\texttt{r-AM}) with respect to the monitorable quantity $\mathcal{G}^*_{(x_+,Q)}$.

\subsection{Problem specific knowledge encoding} We end this section with a word on \emph{valid} and/or \emph{symmetry breaking} constraints, also playing a determinant role in (MICP)'s solvability \cite{Liberti12,Dias21}. In some situations, one is aware, \emph{a priori}, of some regularity conditions or specific structure that holds at any optimal solution of \eqref{eq:min_problem}. General \emph{valid} constraints implement cuts in the original search space whereas \emph{symmetry breaking} constraints essentially induce an ordering of the solutions and prevent the possibility of having multiple optimal solutions that are permutations of each other \cite{Sabo24}. We illustrate now both types of constraints hereafter. 
\example[$\ell_2$-\emph{clustering}] \label{l2_cluster} Let $\{\beta^{(s)}\}_{s \in [N]}$ be $N$ points in $\mathbb{R}^d$. A clustering task with $B \in \mathbb{N}$ centroids might ask to minimize the average distance to the nearest centroid among $\{x_l\}_{l \in[B]}$, \vspace{-15pt}$$x = (x_1, \dots, x_B) \to F(x) = \frac{1}{N}\sum_{s=1}^{N}\, \min_{l\,\in\,[B]}\,|x_l-\beta^{(s)}||_2.$$  
\begin{itemize} 
\item[(i)] For any $l \in [B]$, it can be shown that an optimally located centroid $x^*_l$ is contained in the convex hull of $\{\beta^{(s)}\}_{s=1}^{N}$. The following constraints are \emph{valid cuts}  
\begin{equation}
x_l^*\in \textbf{conv}\Big(\big\{\beta^{(1)},\dots,\beta^{(N)}\big\}\Big) \quad \forall l \in [B].
\label{eq:chull}
\end{equation}
Also, without loss of generality, an optimal solution can be assumed to satisfy the following conditions: 
\begin{equation}
x^*_{l_1} \not =x^{*}_{l_2}\quad \forall (l_1,l_2)\in [B]^2,\hspace{3pt} l_1\not=l_2.
\label{eq:differences}
\end{equation}
Unfortunately, this latter constraint is nonconvex. If such hypothesis applies, \\a more restrictive constraint that entails \eqref{eq:differences} would be, for a chosen $\updelta\geq0$,
\begin{equation}
\langle\mathbf{1}_d,x_{l_1} -x_{l_2}\rangle \geq \updelta\quad \forall (l_1,l_2)\in [B]^2,\hspace{3pt} l_1<l_2.
\label{eq:differences_stronger}
\end{equation}
These last \emph{symmetry breaking} constraints rank subvectors $\{x_l\}_{l\in[B]}$ according to their mean and impose a separation of $\updelta/d$ between two successive means.\\

\item[(ii)] For any $l \in[B]$, there should be at least one data point $\beta^{(s)}$ such that $x^*_l$ is the closest among $(x_1^*,\dots,x_B^*)$ from $\beta^{(s)}$. We write the \emph{valid} constraints \eqref{eq:selection_minimale} and \eqref{eq:selection_minimale_2}, respectively for our (MICP) and (BIC) reformulations of \eqref{eq:min_problem},
\begin{align}
\sum_{s\in [N]}\,t_l^{(s)} \geq 1 &\quad \forall l \in[B],  \label{eq:selection_minimale} \\
\vspace{-20pt}
\sum_{s\in [N]}\,q_l^{(s)} \geq 1& \quad \forall l \in [B]. \label{eq:selection_minimale_2}
\end{align}
\end{itemize}
\begin{remark} If \emph{a priori} information is encoded through constraints such as \eqref{eq:chull}, we implicitly assume that feasible sets $\mathcal{X}$, $\mathcal{T} = \bigtimes_{s=1}^{N}\,\{0,1\}^{n_s}$ and $\mathcal{Q} = \bigtimes_{s=1}^{N}\,\Delta^{n_s}$ for variable $x$, $T=(t^{(1)},\dots,t^{(N)})$ and $Q=(q^{(1)},\dots,q^{(N)})$ incorporate these constraints.\end{remark}
\vspace{-15pt}
\section{Relaxed Alternating Minimization (r-AM)} 
\label{sec:RAM}
Although alternating minimization applied on \eqref{eq:big_smc} is intuitive and works relatively well in practice (see e.g.\@ \cite{Cortes15,Chen22}), we emphasize a major drawback in Section \ref{subsec:rationale}. Fortunately, this drawback can be somehow mitigated and this is what we propose to explore in Section \ref{subsec:q_updates}. 
\vspace{-10pt}
\subsection{Rationale}
\label{subsec:rationale}
By default, the update of the weights in \eqref{eq:am_spec}, i.e. the minimization of $\bar{F}(x_+,\tilde{Q})$ with respect to $\tilde{Q}$ at fixed $x_+ \in \mathcal{X}$, assigns systematically for every $s\in[N]$ and $l\in[n_s]$,
\begin{equation} (q_+^{(s)})_l = \begin{cases} 1 & \text{when } $l$ \text{ is the smallest element of }\mathcal{A}^{(s)}(x_+) \\ 0 & \text{otherwise}.\end{cases}\label{eq:am_default_update} \end{equation}
In other words, the information about other \emph{component}'s values, i.e. $h^{(s)}_l(x_+)$ for every $s \in [N]$ and $l \not= \min \mathcal{A}^{(s)}(x_+)$, is totally disregarded. Lemma \ref{lemma_equivalence_local_optimality} underlines that directly next to $x_+$,  the representation of objective $F$ only relies on indices belonging to active sets $\mathcal{A}^{(s)}(x_+)$. However, \emph{components} close to active at $x_+$ play a key role when it comes to represent the landscape of each loss $h^{(s)}$ in a neighbouring valley, i.e.\@ regions $\mathcal{R}(\sigma)$ (recall Eq. \eqref{eq:packing}) for which there exists a $\tilde{s} \in [N]$ such that $\sigma_{\tilde{s}} \not \in \mathcal{A}^{(\tilde{s})}(x_+)$.\\
\vspace{-5pt}
\\
\noindent Explained differently, \texttt{AM} proceeds as follows. Given $x_+ \in \mathcal{X}$, it sets $\sigma=(\sigma_1,\dots,\sigma_N)$  where, for every $s\in[N]$, $\sigma_s = l$ if and only if $(q_+^{(s)})_l=1$ according to \eqref{eq:am_default_update}, achieving $$ F_{\sigma}(x_+)=\bar{h}(x_+)+\frac{1}{N}\sum_{s=1}^N\,h^{(s)}_{\sigma_s}(x_+) = F(x_+).$$ Then, $x_{++}$ is chosen as a global minimizer of $F_{\sigma}$. If it occurs that $F_{\sigma}(x_{++})=F(x_{++})$ then \texttt{AM} might already be stuck. Indeed, this means that $\sigma_s$ belongs to $\mathcal{A}^{(s)}(x_{++})$ for every $s\in[N]$. Furthermore if $\sigma_s$ is the smallest element\footnote{Note that it was the smallest element of $\mathcal{A}^{(s)}(x_+)$ according to \eqref{eq:am_default_update} and the fact $\sigma_s=l \Leftrightarrow (q_+^{(s)})_l=1$.} of $\mathcal{A}^{(s)}(x_{++})$ for every $s \in [N]$ then the procedure detailed above will loop forever.
 This suggests that \texttt{AM} is biased in the sense that $x$-updates only take into account the very local geometry of $F$ and, presumably,  tends to converge fast but towards \emph{critical points} not far away from the initial guess. Based on this observation, one may attempt to put strictly positive weights not only on indices from $\mathcal{A}^{(s)}(x_+)$ but also on other promising indices linked to close to active \emph{components}. As mentioned earlier, they better describe the surroundings of $x_+$ and, possibly, allow to escape from the current convex valley wherein \texttt{AM} will perhaps stay until convergence. We initiated this work with the belief that coupled, with a multi-start strategy, this approach would promote more exploration in the search space and lead to better final objective values. \\ \vspace{-8pt}\\
To end this subsection, we show a 1D toy example clearly displaying the valley escape phenomenon we strive for when relaxing plain alternating minimization. 
\begin{example}[Valley escape phenomenon] On Figure \ref{fig:SMC_valley_escape}, we depicted a \eqref{eq:min_problem} objective defined for every $x \in \mathcal{X} = [-5,5]$ by $F(x) = \frac{1}{2}\min\big\{x^2-3x-2,x^2+2,x^2+x-2,x^2+4x+2\big\} +\frac{1}{2}\min\big\{\frac{1}{2}x^2+2x-2,x^2+4x+2,x^2-1\big\}$. Its (unique) global minimizer is located at $x^* = -4$. We sampled $10^2$ equidistant starting points along $[-5,5]$ and applied plain alternating minimization \texttt{AM} subject to the election rule \eqref{eq:am_default_update}. We also tried an alternative version wherein for every other iteration, instead of \eqref{eq:am_default_update}, we set up the weights as 
\vspace{-6pt}
\begin{equation}
q^{(s)}_+ = \textbf{softmin}\Big(\kappa \cdot \big(h^{(s)}_1(x_+),\dots,h^{(s)}_{n_s}(x_+)\big)\Big)\quad \forall s \in [N].
\label{eq:prelude_softmin}
\end{equation}
where the \textbf{softmin}  transformation of $u = (u_1,\dots,u_{\bar{n}}) \in \mathbb{R}^{\bar{n}}$ reads
\begin{equation}
(v_1,\dots,v_{\bar{n}}) = \textbf{softmin}(u) \quad \text{with}\quad v_l = e^{-u_l}/\sum_{l'\,\in\,[\bar{n}]}\,e^{-u_{l'}}\quad \forall l \in [\bar{n}].
\label{def:softminmax}
\vspace{-8pt}
\end{equation}
Here, $N=2$ with $n_1=4$ and $n_2=3$. This alternative method with $\kappa= 1/4$ is dubbed as \texttt{ALTER} on Figure \ref{fig:SMC_valley_escape}. Note that $\kappa\to \infty$ in \eqref{eq:prelude_softmin} would have led to \texttt{AM} back again. Hence, using the \textbf{softmin} operator above relaxes exact optimization of the weights at fixed $x_+ \in \mathcal{X}$. One can observe that for every starting point, it allowed \texttt{ALTER} to converge towards $x^*$ although the relaxation mechanism \eqref{eq:prelude_softmin} was employed for only half of the iterations. It allowed jumping over a \emph{local minimum} near $x \simeq 3/2$ while \texttt{AM} inevitably got trapped therein.
\end{example}
\vspace{-20pt}
\subsection{$x$-updates} 
We emphasize that, compared to \texttt{AM}, the \texttt{r-AM} methods we will describe only differ in the way they handle $Q$-updates. Let $Q \in \mathcal{Q}$, the minimizer $x^*_{|Q}$ of $F^*_{|Q}$ (see \eqref{eq:upper_rel}) returned by our black-box (Assumption \ref{A1}) is chosen deterministically.
\begin{remark}When the mapping $Q \to x^*_{|\mathcal{Q}}$ is Lipschitz continuous, any algorithm (e.g. \texttt{AM}) alternating between $x$-updates and $Q$-updates enjoys the desirable feature that converging $Q$ weights also imply converging $x$ iterates. Nevertheless, one cannot usually assess the Lipschitz continuity of such a mapping beforehand. However, it might happen in practice that two successive iterates $x$ and $x_+$ define the exact same \emph{active sets}, i.e. $\mathcal{A}^{(s)}(x)=\mathcal{A}^{(s)}(x_+)$ for every $s \in [N]$. When $Q$-updates do not allow for symmetry, e.g. in \eqref{eq:am_default_update}, we note that this would induce equal weights $Q$ and $Q_+$. Then, as highlighted above, two consecutive calls to the black-box with inputs $Q$ and $Q_+$ where $Q=Q_+$ will provide the same $x^*_{|Q}=x^*_{|Q_{+}}$ so that a stopping criteria based on the distance between consecutive iterates will halt.
\end{remark}
\vspace{-5pt}
\begin{figure}[h]
\centering
\includegraphics[width=0.8\textwidth]{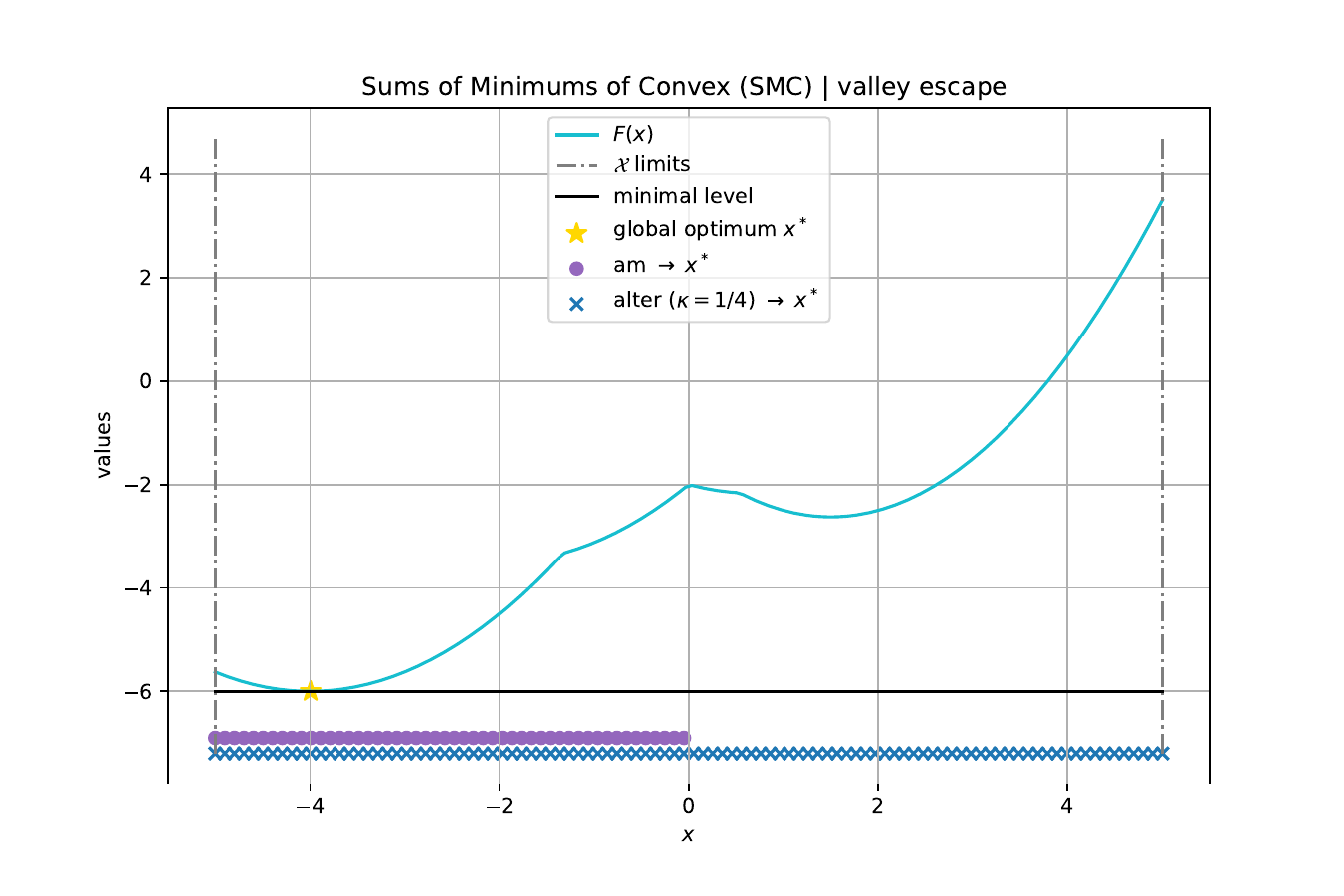}
\vspace{-20pt}
\caption{A first glimpse of relaxed Alternating Minimization (\texttt{r-AM}) methods}
\label{fig:SMC_valley_escape}
\end{figure}
\vspace{-20pt}
\subsection{$Q$-updates} \label{subsec:q_updates}
We explain now the threefold weight update mechanism in \texttt{r-AM} methods. \\ Let $x_+ \in \mathcal{X}$, we will extensively use the shorthand 
$$ \mathbf{h}^{(s)}(x_+) := \big(h^{(s)}_1(x_+),\dots,h^{(s)}_{n_s}(x_+)\big)\in \mathbb{R}^{(n_s)} \quad\forall s \in [N].$$
Informally, $Q$-updates in \texttt{r-AM} are simply convex combinations between the weights associated with \texttt{AM}, i.e. \eqref{eq:am_default_update}, and sound \textbf{candidate} weights that take into account the value of \emph{all} the \emph{components}, e.g. \eqref{eq:prelude_softmin}, as motivated in Section \ref{subsec:rationale}. 
We get $Q_+ = (q_+^{(1)},\dots,q_+^{(N)}) \in \mathcal{Q}$ thanks to the following election procedure:
\begin{equation}
\forall s \in [N],\quad
\begin{cases}
(q^*)^{(s)} &\gets \arg \min_{\tilde{q}\, \in\, \Delta^{n_s}}\,\langle \tilde{q},\textbf{h}^{(s)}(x_+)\rangle \quad \text{\textcolor{magenta}{// AM weights}}  \\
\hat{q}^{(s)}_+ &\gets \text{\textbf{candidate} from }\Delta^{n_s} \quad \text{\textcolor{purple}{// exploration}} \\
q^{(s)}_+ &= \varepsilon^{(s)}\cdot\hat{q}^{(s)}_+ + (1-\varepsilon^{(s)})\cdot (q^*)^{(s)} \quad \text{\textcolor{blue}{// convex combination}}
\end{cases}
\label{eq:RAM_Q_update}
\end{equation}
where $\varepsilon^{(s)} \in [0,1]$ depicts the \emph{exploration ratio} of the update. If $\varepsilon^{(s)} = 1$ (full exploration) then $q_+^{(s)} = \hat{q}^{(s)}_+$ whereas if $\varepsilon^{(s)}=0$ (greedy) then $q_+^{(s)} = (q^*)^{(s)}$ like \texttt{AM}. \\ \noindent We illustrate \eqref{eq:RAM_Q_update} before moving on. On Figure \ref{fig:SMC_valley_escape}, remembering its description from Section \ref{subsec:rationale}, \texttt{ALTER}  can be seen as a \texttt{r-AM} method with $\varepsilon^{(1)}=\varepsilon^{(2)}=0$ every odd iteration and $\varepsilon^{(1)}=\varepsilon^{(2)}=1$ every pair iteration with $\hat{q}_+^{(1)}$ and $\hat{q}_+^{(2)}$ computed as \eqref{eq:prelude_softmin}.\\ \vspace{-7pt}\\
Now, we state a lemma (proof in Appendix \ref{safeguard_proof}) that provides an explicit rule to set the parameters $\varepsilon = (\varepsilon^{(1)},\dots,\varepsilon^{(N)})$ so that a sufficient decrease of $\bar{F}(x_+,\cdot)$ is guaranteed as long as the \emph{gain} $\mathcal{G}^*_{(x_+,Q)}>0$, i.e. the weight update in \texttt{AM} itself decreases $\bar{F}(x_+,\cdot)$.
\begin{lemma}[Exploration Bound] \label{lemma:exploration_bound}
Let $C \in [0,1]$. Imposing, for every $s \in [N]$,
\begin{equation}
\varepsilon^{(s)} = \min\big\{1,C\cdot \langle q^{(s)}-(q^*)^{(s)},\textbf{h}^{(s)}(x_+)\rangle / \langle \hat{q}^{(s)}_+ - (q^*)^{(s)},\textbf{h}^{(s)}(x_+)\rangle  \big\}
\label{eq:max_safe_exploration}
\end{equation}
in \eqref{eq:RAM_Q_update} ensures that $ \bar{F}(x_+, Q) -  \bar{F}(x_+, Q_+) \geq (1-C)\cdot \mathcal{G}^*_{(x_+,Q)}$.
\end{lemma}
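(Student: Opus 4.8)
The plan is to exploit the fact that $\bar{F}(x_+,\cdot)$ is affine in $Q$ and fully decouples across the $N$ terms, so that both the weight-update decrease and the gain split into independent per-term contributions. Writing $\bar{F}(x_+,Q) = \bar{h}(x_+) + \frac{1}{N}\sum_{s=1}^N \langle q^{(s)},\mathbf{h}^{(s)}(x_+)\rangle$, the constant $\bar{h}(x_+)$ cancels in any difference, leaving
\[
\bar{F}(x_+,Q) - \bar{F}(x_+,Q_+) = \frac{1}{N}\sum_{s=1}^N \langle q^{(s)} - q^{(s)}_+,\,\mathbf{h}^{(s)}(x_+)\rangle.
\]

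First I would rewrite the gain explicitly. Since $(q^*)^{(s)}$ minimizes the linear functional $\tilde{q}\mapsto\langle\tilde{q},\mathbf{h}^{(s)}(x_+)\rangle$ over $\Delta^{n_s}$ and $\mathcal{Q}=\bigtimes_{s=1}^{N}\Delta^{n_s}$ is a product, the minimization over $\tilde{Q}$ separates, giving $\min_{\tilde{Q}\in\mathcal{Q}}\bar{F}(x_+,\tilde{Q}) = \bar{h}(x_+) + \frac{1}{N}\sum_{s=1}^{N}\langle(q^*)^{(s)},\mathbf{h}^{(s)}(x_+)\rangle$, and hence
\[
\mathcal{G}^*_{(x_+,Q)} = \frac{1}{N}\sum_{s=1}^N \langle q^{(s)} - (q^*)^{(s)},\,\mathbf{h}^{(s)}(x_+)\rangle.
\]
Introducing the shorthands $a_s := \langle q^{(s)} - (q^*)^{(s)},\mathbf{h}^{(s)}(x_+)\rangle$ and $b_s := \langle \hat{q}^{(s)}_+ - (q^*)^{(s)},\mathbf{h}^{(s)}(x_+)\rangle$, the optimality of $(q^*)^{(s)}$ on the simplex immediately yields $a_s\geq 0$ and $b_s\geq 0$ for every $s$.

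Next I would substitute the convex-combination update $q^{(s)}_+ = \varepsilon^{(s)}\hat{q}^{(s)}_+ + (1-\varepsilon^{(s)})(q^*)^{(s)}$ from \eqref{eq:RAM_Q_update}, so that $q^{(s)} - q^{(s)}_+ = (q^{(s)} - (q^*)^{(s)}) - \varepsilon^{(s)}(\hat{q}^{(s)}_+ - (q^*)^{(s)})$ and therefore $\langle q^{(s)} - q^{(s)}_+,\mathbf{h}^{(s)}(x_+)\rangle = a_s - \varepsilon^{(s)} b_s$. The whole claim then reduces to the per-term inequality $a_s - \varepsilon^{(s)} b_s \geq (1-C)a_s$, i.e. $\varepsilon^{(s)} b_s \leq C a_s$, after which summing over $s$ and dividing by $N$ recovers $\bar{F}(x_+,Q) - \bar{F}(x_+,Q_+) \geq (1-C)\mathcal{G}^*_{(x_+,Q)}$.

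Finally I would verify $\varepsilon^{(s)} b_s \leq C a_s$ directly from the definition $\varepsilon^{(s)} = \min\{1, C a_s / b_s\}$ in \eqref{eq:max_safe_exploration} via a two-case argument: if the minimum is attained by $C a_s / b_s \leq 1$, then $\varepsilon^{(s)} b_s = C a_s$ with equality; if it is clipped to $1$ (i.e. $b_s \leq C a_s$), then $\varepsilon^{(s)} b_s = b_s \leq C a_s$. In both cases the bound holds. The only delicate point, which I would handle explicitly, is the degenerate case $b_s = 0$, where the ratio in \eqref{eq:max_safe_exploration} is undefined: there one reads $\varepsilon^{(s)} = 1$ (the conventional value of $\min\{1,+\infty\}$) and the inequality $\varepsilon^{(s)} b_s = 0 \leq C a_s$ holds trivially, so no generality is lost. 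Beyond this edge case the argument is an elementary one-dimensional computation, and I do not anticipate any substantive obstacle.
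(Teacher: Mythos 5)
Your proposal is correct and follows essentially the same route as the paper's proof in Appendix C: reduce to the per-term inequality $\langle q^{(s)}-q_+^{(s)},\mathbf{h}^{(s)}(x_+)\rangle \geq (1-C)\,\langle q^{(s)}-(q^*)^{(s)},\mathbf{h}^{(s)}(x_+)\rangle$ and verify it by a two-case analysis on whether the minimum in \eqref{eq:max_safe_exploration} is clipped at $1$. Your unified formulation $\varepsilon^{(s)} b_s \leq C a_s$ is a slightly cleaner packaging of the same computation, and your explicit treatment of the degenerate case $b_s=0$ matches the convention the paper relegates to a footnote ($\varepsilon^{(s)}\gets 1$ when $\hat{q}_+^{(s)}=(q^*)^{(s)}$).
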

\newpage 
\paragraph*{Candidates}
Although the possibilities to choose \textbf{candidates} in \eqref{eq:RAM_Q_update} are limitless, we propose three different options that appear to work well in practice, compared to the alternating minimization \texttt{AM} baseline. Of course, some fine tuning has been attempted to set their hyper-parameter $\kappa \geq 0$ right. Nevertheless, to stay fair with respect to the parameter-free \texttt{AM}, throughout all our numerical experiments, we will adopt a fixed schedule $\{\kappa_k\}_{k \in \mathbb{N}}$ for the hyper-parameter value (see Table \ref{tab:candidates}).\\ \vspace{-20pt} \\
\begin{table}[H]
\centering
  \renewcommand*{\arraystretch}{2.1}
\begin{tabular}{|c||c|c|}
 \hline
\emph{method} & $\hat{q}_+^{(s)}$ \textbf{expr.}   & $\kappa$-\textbf{schedule}   \\
 \hline
 \hline
&\vspace{-5pt} $\textbf{proj}_{\Delta^{n_s}}(q^{(s)}+\kappa \cdot u^{(s)})$   &  \\
Baratt-Boyd (\texttt{BB}) & $ u_l^{(s)} = \begin{cases} 1 & \text{if } l = \min \mathcal{A}^{(s)}(x_+) \\ -1 & \text{otherwise}\end{cases}$ \vspace{-25pt} & $\kappa_k = 10^{-1}$ \\
&  & \\
\hline
 & $\textbf{softmin}\Big(\kappa\cdot\frac{\textbf{h}^{(s)}(x_+)\,+\,u^{(s)}}{\max\{10^{-4},|\langle \mathbf{1}_{n_s},\textbf{h}^{(s)}(x_+)\rangle|\}}\Big)$   &  \\
 (Norm.) Softmin (\texttt{SM})  & $ u^{(s)} \sim \textbf{Uni}\big([-5\cdot10^{-7} , 5\cdot 10^{-7}]^{n_s}\big)$ \vspace{-25pt} & $\kappa_k = \big(\frac{3}{2}\big)^{k^{\frac{3}{4}}}$ \\
&  & \\
\hline
\multirow{2}{*}{Max-Min (\texttt{MM})}   & \multirow{2}{*}{$\textbf{proj}_{\Delta^{n_s}}\Big(\kappa\cdot \frac{\textbf{1}_{n_s}\cdot\max_{l \in [n_s]}\,h^{(s)}_l(x_+)-\textbf{h}^{(s)}(x_+)}{\max_{l \in [n_s]}\,h^{(s)}_l(x_+) - \min_{l \in [n_s]}\,h^{(s)}_l(x_+)}\Big)$}  & \multirow{2}{*}{$\kappa_k = k^{\frac{2}{3}}$}  \\
&& \\
\hline
\end{tabular}\\ \vspace{5pt}
\caption{Candidates for \emph{exploration} in $Q$-updates.}
\label{tab:candidates}
\end{table}
\remark The Baratt-Boyd (\texttt{BB}) candidate above stands as our extension of the inexact $Q$-update introduced in \cite{Baratt20}, i.e. equation \eqref{eq:iAM_original} when $n_s=2$ for every $s\in [N]$. Unlike (\texttt{SM}) and (\texttt{MM}), (\texttt{BB}) candidate might fail to preserve the order of the values of the \emph{components}, i.e. the following property for every $s\in [N]$:
\begin{equation}
h^{(s)}_{l_2} (x_+)\leq h^{(s)}_{l_1}(x_+) \Rightarrow (\hat{q}^{(s)}_+)_{l_2}\geq (\hat{q}^{(s)}_+)_{l_1} \quad \quad \forall (l_1,l_2) \in [n_s]^2\label{eq:order_preserved}
\end{equation}
This is due to the fact that (\texttt{BB}) is not memoryless, its update involves the last $Q$ used. It might happen, especially in early iterations when iterates are still far from convergence, that there exists $s\in[N]$ such that $(\hat{q}^{(s)}_+)_l = q^{(s)}_l + \kappa < (\hat{q}^{(s)}_+)_{\tilde{l}} = q^{(s)}_{\tilde{l}}-\kappa$ although $l$ would be the smallest element of $\mathcal{A}^{(s)}(x_+)$. Indeed, it would occur if the weight previously put on $\tilde{l}$, i.e. $q_{\tilde{l}}^{(s)}$, was significant enough, i.e. $q_{\tilde{l}}^{(s)} > q^{(s)}_l + 2\kappa$. \\ \vspace{-5pt}\\
\noindent Max-Min (\texttt{MM}) maps $\mathbf{h}^{(s)}(x_+)$ to $v^{(s)} \in [0,1]^{n_s}$ where $v^{(s)}_l=1$ (respectively $0$) indicates that $h^{(s)}_l(x_+)$ achieves the minimal (respectively maximal) value of  $\mathbf{h}^{(s)}(x_+)$. Then, the resulting $\kappa\cdot v^{(s)}$ is projected back on the simplex $\Delta^{n_s}$. Property \eqref{eq:order_preserved} holds. \\
\vspace{-5pt}
\\
\noindent Normalized Softmin (\texttt{SM}) is \eqref{eq:prelude_softmin} but the input argument is divided by the (absolute of the) sum of the values of the \emph{components}, i.e. $|\langle \mathbf{1}_{n_s}, \mathbf{h}^{(s)}(x_+)\rangle|$, if not too small. \\We underline that the small stochastic perturbations $\{u^{(s)}\}_{s\,\in\, [N]}$ aim at breaking close ties among the values of the \emph{components}. This promotes candidates fulfilling \begin{equation}\min_{q \,\in\,\{0,1\}^{n_s}}\,||\hat{q}^{(s)}_+-q|| \to 0 \quad\text{as}\quad \kappa \to \infty. \label{eq:converging_candidate} \end{equation}
\subsection{Convergence}
We start by providing the pseudocode of \texttt{r-AM} methods in Algorithm \ref{alg:ram}.
\begin{algorithm} [H]
\caption{$\texttt{r-AM}\, (Q_{\text{init}})$}\label{alg:ram}
\begin{algorithmic}
\Require $\{C_k\}_{k\in\mathbb{N}}$, $\{\kappa_k\}_{k \in \mathbb{N}}$,  tolerance $\delta>0$, $Q_1 = Q_{\text{init}} \in \mathcal{Q}$, $\upsilon = \infty$, $K_{\text{max}}\in\mathbb{N}$
\For{$k=1,\dots,K_{\text{max}}$}
\State $x_{k} \gets \arg \min_{\tilde{x} \in\mathcal{X}}\,\bar{F}(\tilde{x},Q_{k})$ \quad\quad\quad\quad\quad\quad\quad\quad\quad// $x$-update
\For{$s=1,\dots,N$}\quad\quad\quad \quad\quad\quad\quad\quad\quad\quad\quad\quad\hspace{1pt}// $Q$-update
\State $(q^*)^{(s)} \gets \arg \min_{\tilde{q}\, \in\, \Delta^{n_s}}\,\langle \tilde{q},\textbf{h}^{(s)}(x_+)\rangle  $
\State $\hat{q}^{(s)}_+(\kappa_k) \gets \text{candidate from }\Delta^{n_s} $ \quad \quad \quad \quad \quad\hspace{3pt}// cfr. Table \ref{tab:candidates}
\State $ q^{(s)}_+ = \varepsilon^{(s)}\cdot \hat{q}^{(s)}_+ + (1-\varepsilon^{(s)})\cdot (q^*)^{(s)}$ \quad\quad\quad \hspace{2pt} // $\varepsilon^{(s)}(C_k,Q_k, \hat{Q}_+, Q^*)$
\EndFor
\If{$\upsilon-F(x_k)<\delta$}
\Return$ x$ such that $F(x) = \min_{\tilde{k}=1,\dots,k} \,F(x_{\tilde{k}})$
\EndIf
\State $(Q_{k+1},\upsilon) \gets (Q_+,\bar{F}(x_k,Q_{k}))$
\EndFor\\
\Return $ x$ such that $F(x) = \min_{\tilde{k}=1,\dots,k} \,F(x_{\tilde{k}})$
\end{algorithmic}
\end{algorithm}
\vspace{-5pt}
\noindent By choosing $(K_{\text{max}}, \delta) =(\infty, 0)$ in Algorithm \ref{alg:ram}, one would like to (asymptotically) converge towards \emph{critical points}. Under such setting, using the values for $\varepsilon=(\varepsilon^{(1)},\dots,\varepsilon^{(N)})$ prescribed in Lemma \ref{lemma:exploration_bound}, we state in Theorem \ref{theorem:main_conv} sufficient conditions for \texttt{r-AM} methods, independently from the choice of \textbf{candidate}, to converge in objective value. We also give a characterization of cluster points.
\begin{theorem}[\texttt{r-AM} - convergence]\label{theorem_ram} Let $Q_{\text{init}}\in \mathcal{Q}$, $\delta=0$ and $K_{\text{max}}=\infty$ within Algorithm \ref{alg:ram}. Let the values $\varepsilon^{(s)}$ follow the rule \eqref{eq:max_safe_exploration} for every $s\in [N]$ with $q^{(s)}=q^{(s)}_k$ and parameter $C$ chosen to be $C_k$ for every iteration $k\in\mathbb{N}$, let $\bar{C}:=\sup_{k\,\in\,\mathbb{N}}\,C_k<1$ and let $\hat{F}:=\bar{F}(x_1,Q_1)<\infty$. The sequence $\{(x_k,Q_k)\}_{k\in\mathbb{N}}$ produced by \texttt{r-AM} satisfies
\begin{itemize}
\item[(i).] $\{\bar{F}(x_k,Q_k)\}_{k \in \mathbb{N}}$ is non-increasing. 
\item[(ii).] Every cluster point of $\{x_k\}_{k \in \mathbb{N}}$ is a critical point of \eqref{eq:min_problem} and for every $K\geq2$
\vspace{-10pt}
\begin{equation}
\min_{k =1,\dots,K-1}\,\mathcal{G}^*_{(x_k,Q_k)} \leq \frac{\hat{F}-F^*}{\sum_{k=1}^{K-1}\,(1-C_k)} \leq \frac{\hat{F}-F^*}{(1-\bar{C})\cdot(K-1)} \label{eq:guarantee_criticality}
\end{equation}
\end{itemize}
\label{theorem:main_conv}
\end{theorem}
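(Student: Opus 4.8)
The plan is to turn the per-iteration \emph{sufficient decrease} of $\bar F$ into a telescoping bound that simultaneously yields the monotonicity of item~(i) and the rate of item~(ii), and then to convert the vanishing of the \emph{gain} into criticality of cluster points by a limiting argument. Concretely, I would fix an iteration $k$ and track the two half-steps. In the $Q$-update the current $x$-update plays the role of $x_+$, i.e.\@ $x_+=x_k\in\arg\min_{\tilde x}\bar F(\tilde x,Q_k)$, and the rule \eqref{eq:RAM_Q_update} is applied with $C=C_k$; Lemma~\ref{lemma:exploration_bound} then gives
$$\bar F(x_k,Q_k)-\bar F(x_k,Q_{k+1})\geq (1-C_k)\,\mathcal G^*_{(x_k,Q_k)}.$$
Since $x_{k+1}$ minimizes $\bar F(\cdot,Q_{k+1})$ over $\mathcal X$, the subsequent $x$-update can only decrease the objective, so $\bar F(x_{k+1},Q_{k+1})\leq\bar F(x_k,Q_{k+1})$. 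Chaining these gives the descent estimate $\bar F(x_k,Q_k)-\bar F(x_{k+1},Q_{k+1})\geq(1-C_k)\mathcal G^*_{(x_k,Q_k)}\geq0$, the last inequality using $C_k\leq\bar C<1$ and $\mathcal G^*\geq0$; this is exactly item~(i).

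Next I would telescope from $k=1$ to $K-1$. As $\bar F(x_1,Q_1)=\hat F$ and every value of $\bar F$ dominates $F^*$ by the equivalence \eqref{eq:big_smc}, so $\bar F(x_K,Q_K)\geq F^*$, summation yields $\sum_{k=1}^{K-1}(1-C_k)\mathcal G^*_{(x_k,Q_k)}\leq\hat F-F^*$. Lower-bounding each gain by the running minimum $\min_{k}\mathcal G^*_{(x_k,Q_k)}$ and then replacing $1-C_k$ by $1-\bar C$ produces the two inequalities of \eqref{eq:guarantee_criticality}. The same summable estimate, combined with $1-\bar C>0$, shows $\sum_k\mathcal G^*_{(x_k,Q_k)}<\infty$, hence $\mathcal G^*_{(x_k,Q_k)}\to0$.

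For the cluster-point claim, let $x_{k_j}\to\bar x$. Since $\mathcal Q$ is compact, I pass to a further subsequence with $Q_{k_j}\to\bar Q\in\mathcal Q$. Writing the gain as the sum of nonnegative terms $\mathcal G^*_{(x_k,Q_k)}=\tfrac1N\sum_{s}\sum_{l}(q_k)^{(s)}_l\big(h^{(s)}_l(x_k)-h^{(s)}(x_k)\big)$, the limit $\mathcal G^*\to0$ together with continuity of the \emph{components} forces every limiting term to vanish, so $\bar q^{(s)}_l>0$ implies $l\in\mathcal A^{(s)}(\bar x)$; thus $\bar Q$ is supported on the active sets at $\bar x$ and is therefore optimal for $\min_{\tilde Q}\bar F(\bar x,\tilde Q)$, i.e.\@ $\mathcal G^*_{(\bar x,\bar Q)}=0$. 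Simultaneously, passing to the limit in the first-order optimality condition $\mathbf 0_d\in\partial_x\bar F(x_{k_j},Q_{k_j})+\mathcal N_{\mathcal X}(x_{k_j})$ of each $x$-update, and using the outer semicontinuity (closed graph) of the subdifferentials and of the normal-cone map, yields $\mathbf 0_d\in\partial_x\bar F(\bar x,\bar Q)+\mathcal N_{\mathcal X}(\bar x)$, so $\bar x\in\arg\min_{\tilde x}\bar F(\tilde x,\bar Q)$. With both facts in hand, Proposition~\ref{lemma:criticality} applied to the pair $(\bar x,\bar Q)$ certifies that $\bar x$ is critical for \eqref{eq:min_problem}.

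The descent estimate and the telescoping are routine; the delicate step is the passage to the limit in the third paragraph. I must secure continuity of the \emph{components} at $\bar x$ and local boundedness of the subdifferentials along the convergent subsequence in order to extract the limiting subgradients $\bar g\in\partial\bar h(\bar x)$, $g^{(s)}_l\in\partial h^{(s)}_l(\bar x)$ and $\hat v\in\mathcal N_{\mathcal X}(\bar x)$. This is unproblematic when $\bar x$ lies in the interior of the relevant domains, as in all the finite-valued applications of this paper; at a boundary point of $\text{dom}\,\bar h$ an additional regularity hypothesis would be required to justify interchanging the limit and the subdifferentiation.
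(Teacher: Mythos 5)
Your proof is correct and its skeleton coincides with the paper's: the same two-term decomposition of $\bar F(x_k,Q_k)-\bar F(x_{k+1},Q_{k+1})$ (Lemma \ref{lemma:exploration_bound} for the $Q$-half-step, optimality of $x_{k+1}$ for the $x$-half-step) gives item (i), and the same telescoping against the lower bound $\bar F\geq F^*$ gives \eqref{eq:guarantee_criticality}. Where you diverge is the cluster-point step. The paper argues by contradiction: it assumes $\mathcal G^*_{(x^\perp,Q^\perp)}\geq 2\iota>0$ at a cluster point of the joint sequence and uses continuity of $(x,Q)\mapsto\mathcal G^*_{(x,Q)}$ to extract infinitely many iterations with gain at least $\iota$, forcing $\bar F\to-\infty$. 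You instead deduce $\sum_k\mathcal G^*_{(x_k,Q_k)}<\infty$ directly from the telescoped bound and $1-\bar C>0$, hence $\mathcal G^*_{(x_k,Q_k)}\to0$, and then pass to the limit termwise in the nonnegative decomposition of the gain; this is more direct and avoids invoking continuity of the value function $\min_{\tilde Q}\bar F(\cdot,\tilde Q)$ as a whole. More importantly, you explicitly verify the hypothesis of Proposition \ref{lemma:criticality} that the limit point satisfies $\bar x\in\arg\min_{\tilde x\in\mathcal X}\bar F(\tilde x,\bar Q)$ — a condition built into the definition of the gain \eqref{def:gain_x} which the paper applies without comment at $(x^\perp,Q^\perp)$. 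Your route to it (closedness of subdifferential and normal-cone graphs) works under the interiority caveat you state, but a lighter argument avoids subdifferentials altogether: for any $\tilde x\in\mathcal X$ one has $\bar F(x_{k_j},Q_{k_j})\leq\bar F(\tilde x,Q_{k_j})$, and letting $j\to\infty$ (continuity of $\bar F$ in $Q$ for fixed $\tilde x$, lower semicontinuity in $x$) yields $\bar F(\bar x,\bar Q)\leq\bar F(\tilde x,\bar Q)$. With that substitution your argument needs no extra regularity beyond what the paper already assumes.
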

\vspace{-15pt}
\begin{proof}
(i). Let $k \in \mathbb{N}$ be fixed and let us show that $\bar{F}(x_k,Q_k)-\bar{F}(x_{k+1},Q_{k+1})\geq0$. 
$$\bar{F}(x_k,Q_k)-\bar{F}(x_{k+1},Q_{k+1})= \underbrace{\bar{F}(x_{k},Q_k)-\bar{F}(x_{k},Q_{k+1})}_{(1-C_k)\,\cdot \,\mathcal{G}^*_{(x_{k},Q_k)} \,(\text{Lemma}\,\ref{lemma:exploration_bound})} + \underbrace{\bar{F}(x_k,Q_{k+1})-\bar{F}(x_{k+1},Q_{k+1})}_{\max_{\tilde{x}\,\in\,\mathcal{X}}\,\bar{F}(x_k,Q_{k+1})-\bar{F}(\tilde{x},Q_{k+1})}$$
Since $x_k \in \mathcal{X}$ by construction, the two terms above are both nonnegative. \\
(ii). For any $(x,Q) \in \mathcal{X}\times\mathcal{Q}$, we have $\bar{F}(x,Q)\geq F(x)\geq F^*>-\infty$ hence one deduces that $\bar{F}$ is bounded from below. Summing the above equation for $k=1$ to $K-1$ yields
\vspace{-10pt}
\begin{align*}
\bar{F}(x_1,Q_1)-\bar{F}(x_K,Q_K) = \sum_{k=1}^{K-1}\,\bar{F}(x_k,Q_k)-\bar{F}(x_{k+1},Q_{k+1})&\geq \sum_{k=1}^{K-1}\,(1-C_k)\,\cdot\,\mathcal{G}^*_{(x_{k},Q_k)} \\
\end{align*}
from which \eqref{eq:guarantee_criticality} readily emerges as $\bar{C}<1$. Now, let us denote by $(x^{\perp},Q^{\perp})$ an arbitrary cluster point of the sequence $\{(x_k,Q_k)\}_{k\, \in\, \mathbb{N}}$. We would like to show that $x^{\perp}$ must be \emph{critical}. 
Let us assume by absurdum that $\mathcal{G}^*_{(x^{\perp},Q^{\perp})} \geq 2  \iota > 0$.
By continuity of $\mathcal{G}^*_{(x,Q)}$ as a function with input arguments $(x,Q) \in \mathcal{X}\times \mathcal{Q}$ and the definition of cluster point in first-countable sets, there exists an infinite subset of indices $\mathcal{K} \subseteq \mathbb{N}$ generating the subsequence $\{(x_{\tilde{k}},Q_{\tilde{k}})\}_{\tilde{k}\, \in \,\mathcal{K}}$ along which elements are close enough from $(x^\perp,Q^\perp)$ such that $\mathcal{G}^*(x_{\tilde{k}},Q_{\tilde{k}}) \geq \iota$. Hence, one would conclude that $\lim_{\tilde{k} \to \infty} \, \bar{F}(x_{\tilde{k}},Q_{\tilde{k}}) = - \infty$ (by subtracting at least $(1-\bar{C})\cdot \iota>0$ infinitely many times from $\hat{F}=\bar{F}(x_1,Q_1)$) which is impossible since $\bar{F}$ is bounded from below. Therefore, it holds that $\mathcal{G}^*_{(x^{\perp},Q^{\perp})}=0$, showing by Proposition \ref{lemma:criticality} that $x^\perp$ must be \emph{critical}.
\end{proof}
\paragraph*{Stopping criterion} Let $\delta >0$ for Algorithm \ref{alg:ram}. The main for-loop exits if quantity $\bar{F}(x_k,Q_k)-F(x_{k+1})$ falls below the threshold $\delta$, which in turn would imply that $\bar{F}(x_k,Q_k)-\bar{F}(x_{k+1},Q_{k+1}) \leq \delta$. In such circumstances, the guarantees at $x_k$, in terms of \emph{gains} (cfr. Proposition \ref{lemma:criticality}), become $$ \mathcal{G}^*_{(x_k,Q_k)} \leq \delta / (1-C_k). \vspace{-5pt}$$
\section{Numerical Experiments}
\label{sec:numerical_exp}
In this last section, we benchmark different local approaches on \emph{Piecewise-Linear Regression} (PLR), exactly as detailed in Example \ref{DC_fitting}, as well as on \emph{Restricted Facility Location} (RFL) problems somehow related to Example \ref{l2_cluster}. More specifically, we try out alternating minimization \texttt{AM} and make it compete against two of our \texttt{r-AM} methods, i.e. Algorithm \ref{alg:ram} with \textbf{candidates} chosen as (Norm.) Softmin (\texttt{SM}) and Max-Min (\texttt{MM}) (see Table \ref{tab:candidates}), and against our generalization (\texttt{BB}) of Baratt \& Boyd's work \cite{Baratt20}. Since \eqref{eq:min_problem} belongs to the less structured (DC) programming world, we also apply the standard (DC) algorithm, i.e. \texttt{DCA} (see e.g. \cite{LeThi18}), for comparison. Note that all the experiments were performed on real-world datasets. 
\paragraph*{Methodology}
We discuss here the implementations of the five aforementioned contenders. Except for \texttt{DCA}, every approach can be seen as a \texttt{r-AM} method with its own specifications in terms of $\varepsilon$ policies and $\{C_k\}_{k \in \mathbb{N}}$ schedules.
\begin{table}[H]
\centering
  \renewcommand*{\arraystretch}{1.2}
\begin{tabular}{|c||c|c|}
 \hline
\emph{method} & $\varepsilon$ rule   & $C$-\textbf{schedule}   \\
 \hline
 \hline
Baratt-Boyd (\texttt{BB}) & $\varepsilon = \mathbf{1}_{N}$ & /  \\
\hline
Alternating-Minimization (\texttt{AM})& $\varepsilon = \mathbf{0}_{N}$& / \\
\hline
Max-Min (\texttt{MM})  & \multirow{2}{*}{Lemma \ref{lemma:exploration_bound}}  & \multirow{2}{*}{$C_k = \frac{2}{\sqrt{k-1}+3}$}  \\
(Norm.) Softmin (\texttt{SM})  & & \\
\hline
\end{tabular}\\ \vspace{7pt}
\caption{Specifications of methods.}
\label{tab:specs}
\end{table}
\noindent For the (PLR) benchmark (respectively (RFL)), $50$ (respectively $150$) weights $Q_{\text{init}}$ were sampled uniformly at random on $\mathcal{Q}$ and fed as starting $Q$ iterates. We tracked the times taken by each method to terminate for $\delta = 10^{-8}$ and $K_{\text{max}}=400$, and recorded the best objective value obtained during each run. Interestingly enough, it turns out that usual \texttt{DCA} iterations for (PLR) are the same as \texttt{AM}. However, in what concerns (RFL), the two methods are no longer equivalent.
\newpage 
\subsection{Piecewise-Linear Regression} \label{sec:PLR} We chose the common datasets \href{https://archive.ics.uci.edu/dataset/1/abalone}{\underline{abalone}}, \href{https://archive.ics.uci.edu/dataset/186/wine+quality}{\underline{winequality}} and \href{https://www.kaggle.com/datasets/teertha/ushealthinsurancedataset}{\underline{insurance}} for this regression task. The raw data has first been worked out to transform the few categorical features into numerical ones and then to increase its dimensionality by incorporating
second-order interactions between explanatory features. That is, let $\tilde{\beta} \in \mathbb{R}^{\tilde{p}}$ be an observation in the original feature space, then we computed $\bar{\beta}\in \mathbb{R}^p$ with $p = \tilde{p}\cdot(3+\tilde{p})/2$ as 
\vspace{-7pt}
$$  \tilde{\beta} \rightarrow \bar{\beta}= (\tilde{\beta}, \tilde{\beta}_1 \cdot \tilde{\beta}_{1:p}, \tilde{\beta}_2\cdot \tilde{\beta}_{2:p},\dots,\tilde{\beta}_p\cdot \tilde{\beta}_{p:p}) \in \mathbb{R}^{\tilde{p}\cdot(3+\tilde{p})/2}$$ 
\vspace{-20pt}
\begin{center}
\begin{tabular}{l|c|c|c}
& insurance & winequality & abalone \\
 \hline
$p$ & $27$ & $77$ & $44$
\end{tabular}
\end{center}
In the end, we dispose of a dataset of tuples $(\bar{\beta}^{(s)},\gamma^{(s)}) \in \mathbb{R}^{p+1}$ for $s=1,\dots,N=750$. To predict $\gamma$ from $\bar{\beta}$, we fit the coefficients $x$ of a piecewise-linear model in $\bar{\beta}$, i.e.
\begin{align*}
\Delta \ell(\bar{\beta} \, ; x) &= \max_{e_1 \, \in \, [B_1]}\,\big\{\langle \bar{\beta}, x^{(1)}_{e_1} \rangle\big\} - \max_{e_2 \, \in \, [B_2]}\,\big\{\langle \bar{\beta}, x^{(2)}_{e_2} \rangle\big\}
\vspace{-40pt}
\end{align*}
where $B_1=6$, $B_2=5$ and $x=(x_1^{(1)},\dots,x_{B_1}^{(1)},x_1^{(2)},\dots,x_{B_2}^{(2)})$, by trying to minimize the averaged-$L_1$ prediction loss \eqref{eq:MAE_bis} on $\mathcal{X} = \mathbb{B}_{\infty}(\mathbf{0}_p\,;\,10^2)^{B_1 + B_2}$,
\vspace{-10pt}
\begin{equation}
\min_{x \,\in \,\mathcal{X}}\,\frac{1}{N}\,\sum_{s=1}^{N}\,|\gamma^{(s)} - \Delta \ell(\bar{\beta}^{(s)}\,;\, x)|.
\label{eq:MAE_bis}
\end{equation}
\vspace{-30pt}
\paragraph*{Results} As expected (cfr. Section \ref{subsec:rationale}), one can clearly observe on Figure \ref{fig:plr_experiment3} that \texttt{AM} usually finished the fastest but, most of the time, with the second worst average objective value after (\texttt{BB}). The latter was quite slow and did not manage to explore as much as the other contenders. We partially explain this by the number of $Q$-updates it takes (\texttt{BB}) to move from one vertex of $\mathcal{Q}$ to another when $n_s$ is big (here $n_s= B_1\cdot B_2=30$ for every $s\in [N]$, see Example \ref{DC_fitting}). On the other hand, this ability to try intermediate weights $Q$, i.e. not only vertices of $\mathcal{Q}$, also represents the exploration strength of the method compared to \texttt{AM}. Hence, a \emph{trade-off} symbolized by $\kappa$ should exist although our trials with $\kappa=1/10^\varsigma$ for $\varsigma \in \{1/4,1/2,1,2\}$ were not more successful. Based on Table \ref{tab:plr_results}, we can safely state that our methods (\texttt{MM}) and (\texttt{SM}) were the most competitive on the (PLR). One either chooses rapidity (\texttt{MM}) or better objective values (\texttt{SM}).  \\ \vspace{-2pt}
\begin{table}[H]
\centering
\begin{tabular}{c||c|c|c|c|c}
dataset &\emph{method} &avg. time [s] & min. value & avg. value & med. value\\
\hline
\multirow{4}{*}{insurance} &(\texttt{MM}) & 51.47& 0.0664& 0.0825 &0.0820\\
& \texttt{AM} / \texttt{DCA} &  \textbf{37.26} & \textcolor{red}{0.0704} &\textcolor{red}{0.0895} & \textcolor{red}{0.0895}\\
& (\texttt{BB}) & \textcolor{red}{219.34}&0.0632 & 0.0879 &0.0894\\
& (\texttt{SM}) &80.77& \textbf{0.0630}& \textbf{0.0779} &\textbf{0.0783}\\
\hline
\multirow{4}{*}{winequality} & (\texttt{MM})& 186.85& 0.0450& 0.0712 &0.0712\\
& \texttt{AM} / \texttt{DCA} &  \textbf{108.43} & 0.0638 &0.0906 & 0.0916\\
& (\texttt{BB}) & \textcolor{red}{659.53}&\textcolor{red}{0.0703} & \textcolor{red}{0.0913} &\textcolor{red}{0.0935}\\
& (\texttt{SM}) &479.71&\textbf{0.0175}&  \textbf{0.0407} &\textbf{0.0348}\\
\hline
\multirow{4}{*}{abalone} & (\texttt{MM}) & 112.41& 0.1686& 0.1886 &0.1893\\
& \texttt{AM} / \texttt{DCA} & \textbf{63.28} &0.1754&0.1976 & 0.1965\\
& (\texttt{BB}) & \textcolor{red}{439.67}&\textcolor{red}{0.1810} & \textcolor{red}{0.1995} &\textcolor{red}{0.2005}\\
& (\texttt{SM}) &186.03& \textbf{0.1611}&\textbf{0.1830} &\textbf{0.1808}
\end{tabular}
\vspace{10pt}
\caption{Piecewise-Linear Regression: results}
\label{tab:plr_results}
\end{table}
\newpage 
\begin{figure}[H]%
    \hspace{-30pt}
    \vspace{-20pt}
    \subfloat{{\includegraphics[scale=0.45]{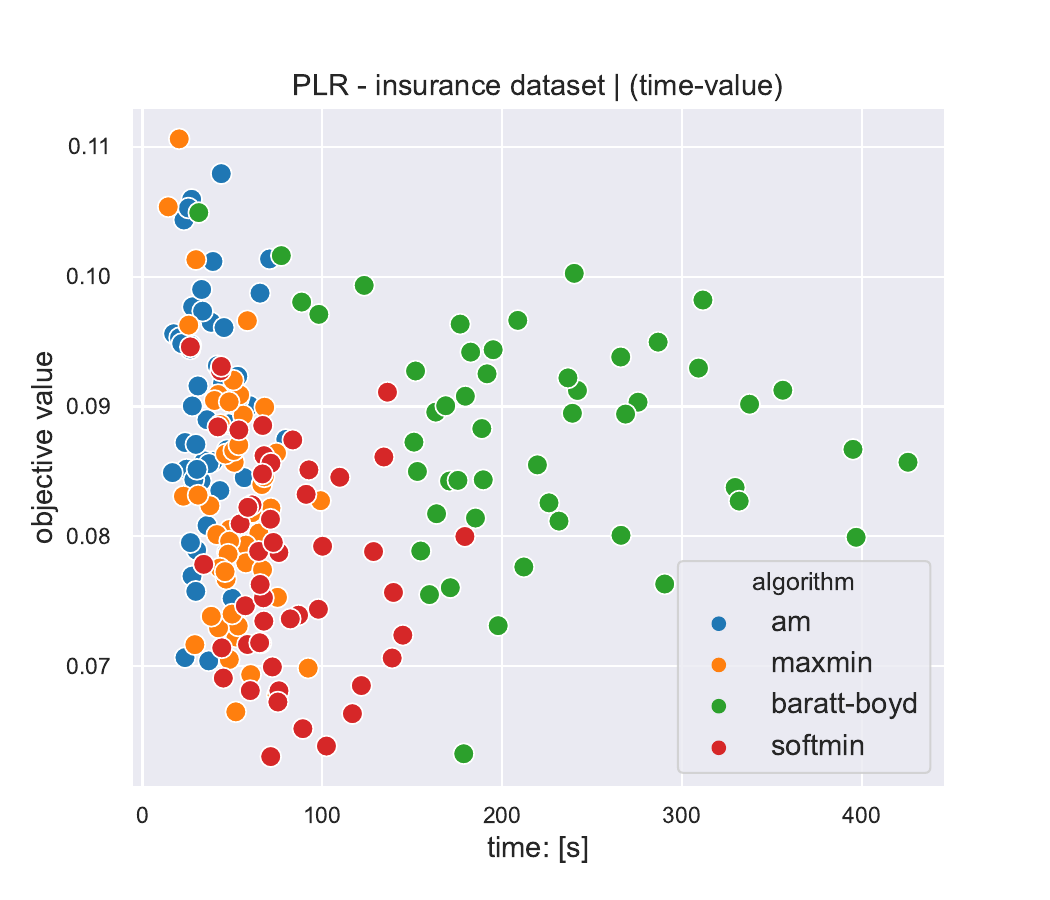} }}%
    \subfloat{{\includegraphics[scale=0.45]{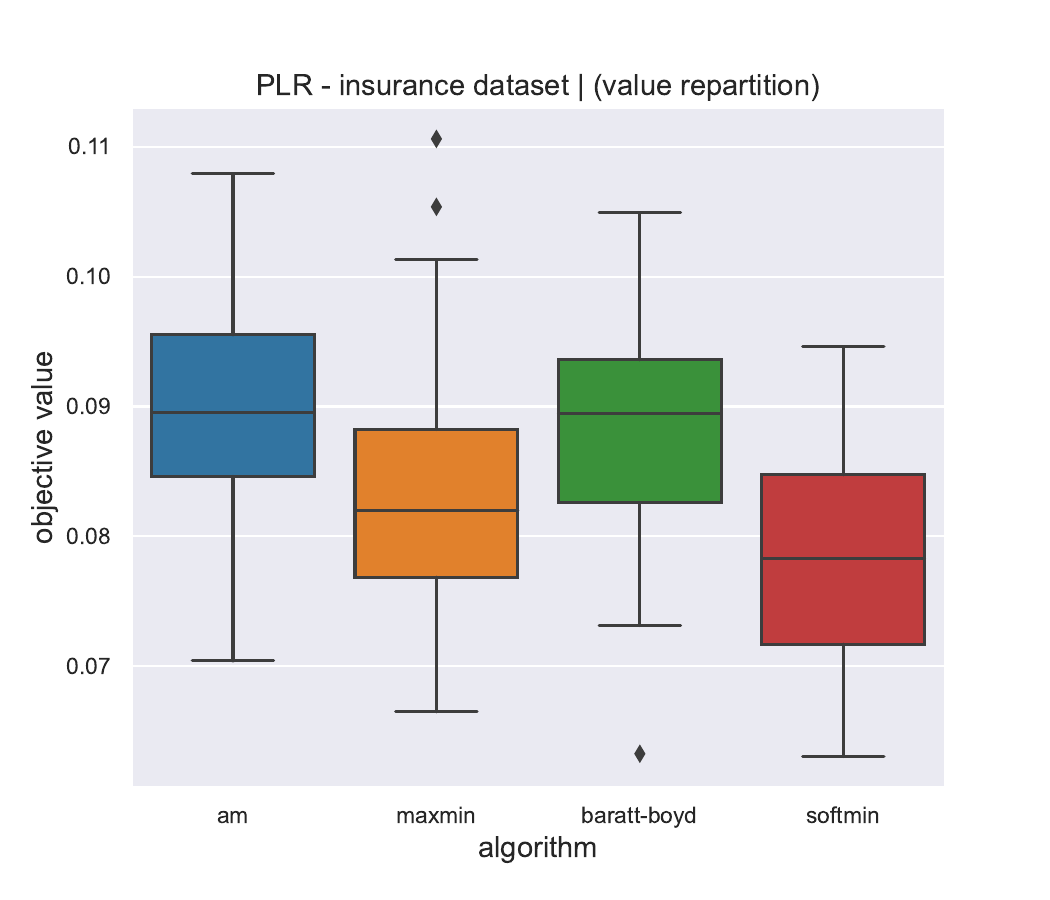} }}%
\label{fig:plr_experiment}%
\end{figure}
\begin{figure}[H]%
    \hspace{-30pt}
    \vspace{-25pt}
    \subfloat{{\includegraphics[scale=0.45]{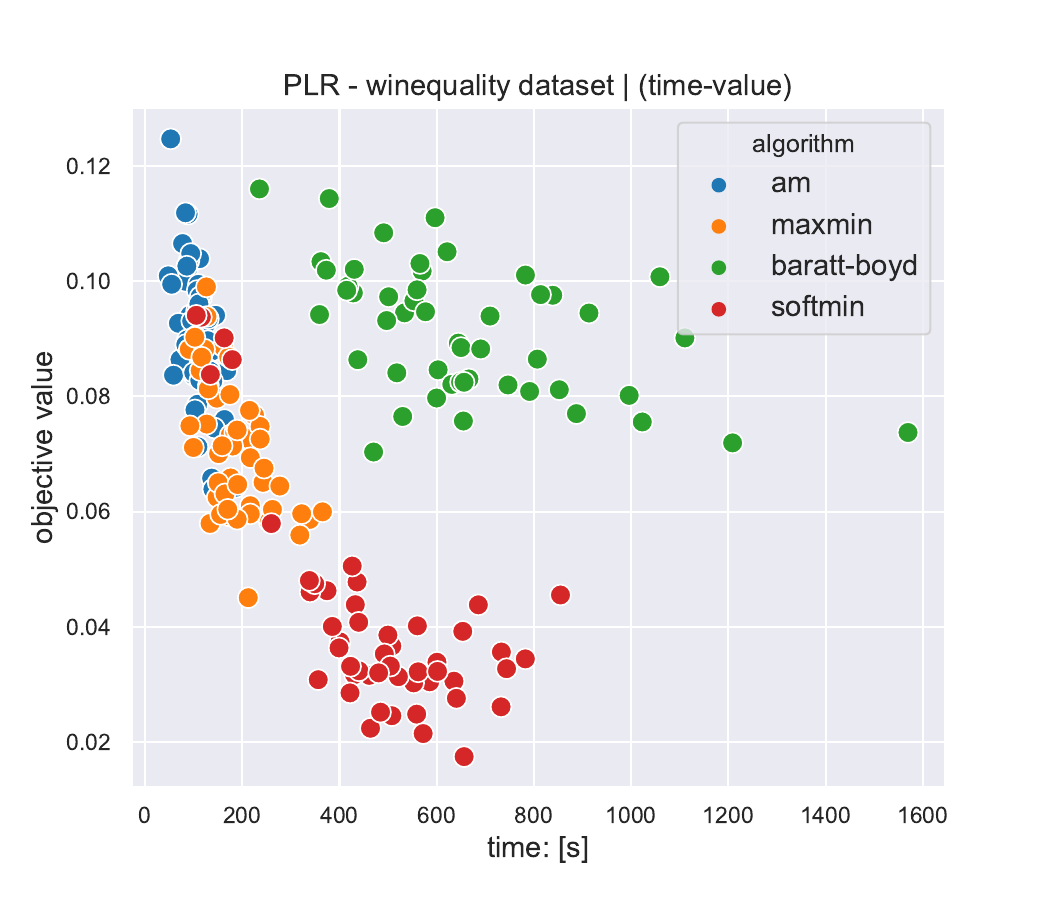} }}%
    \subfloat{{\includegraphics[scale=0.45]{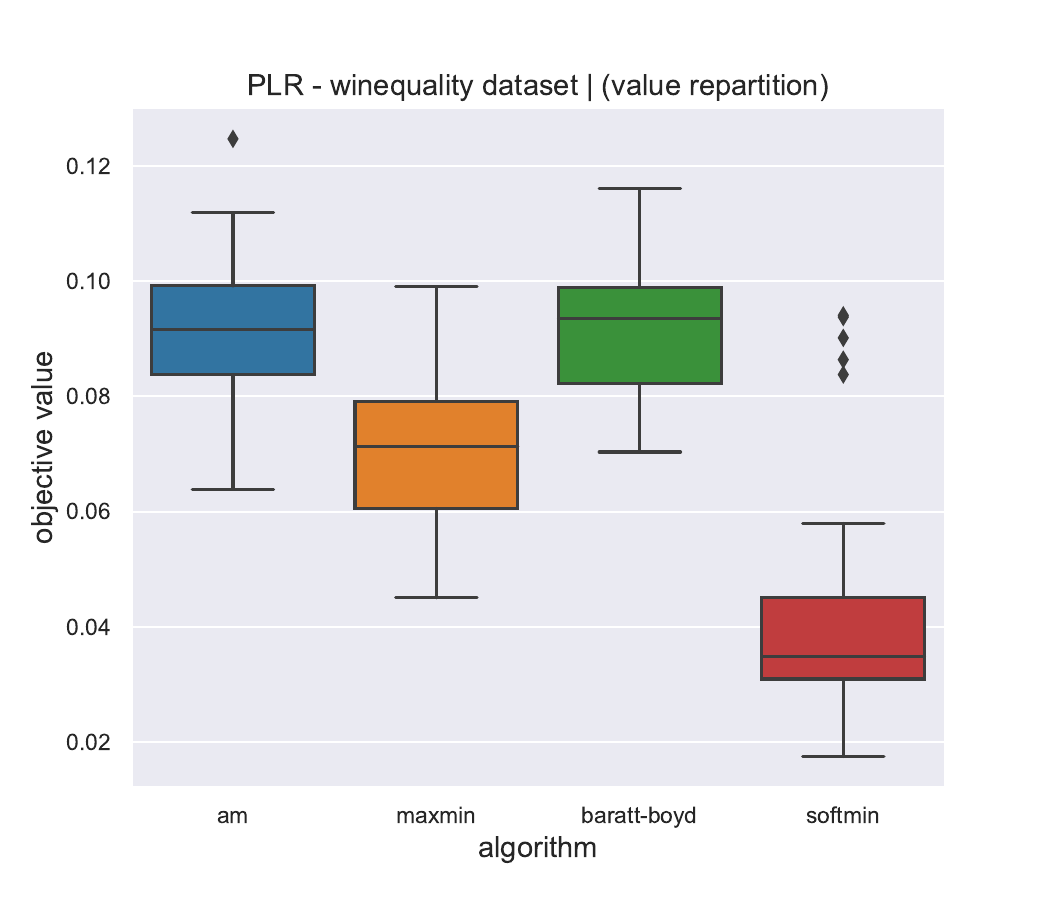} }}%
\label{fig:plr_experiment2}%
\end{figure}
\begin{figure}[H]%
    \hspace{-30pt}
    \vspace{-25pt}
    \subfloat{{\includegraphics[scale=0.45]{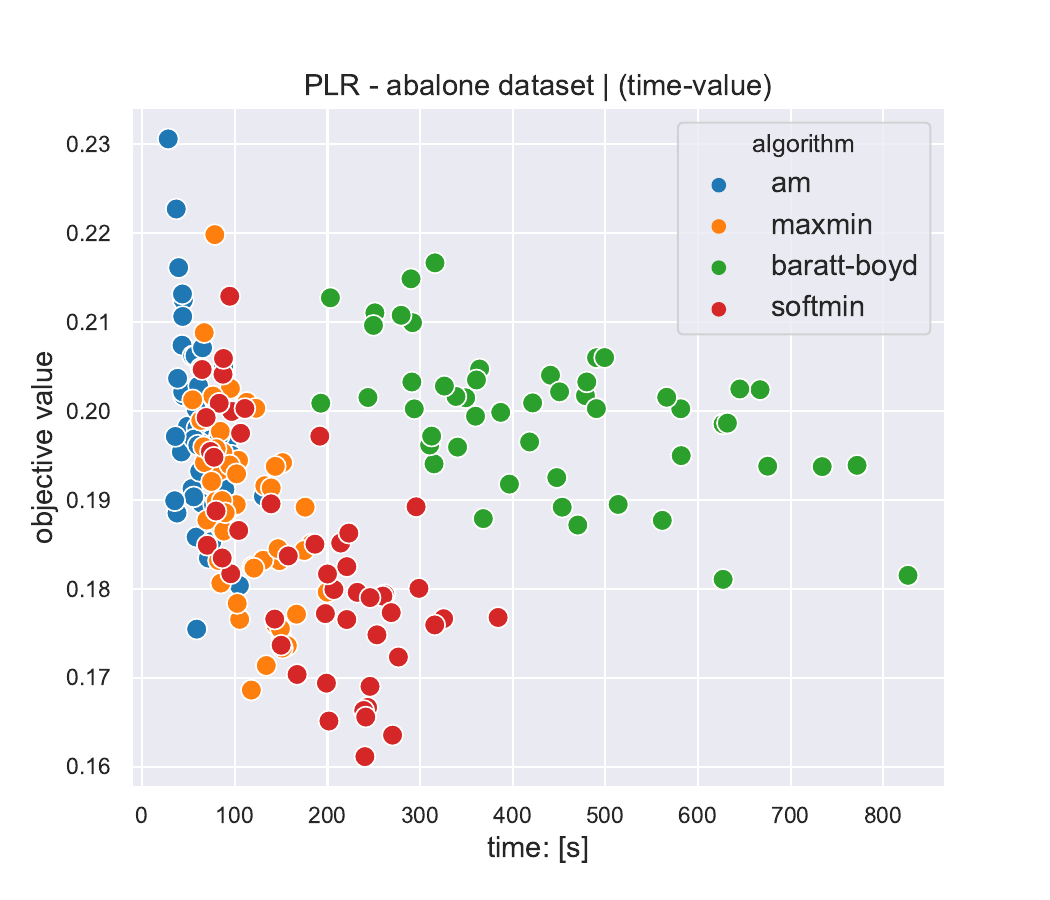} }}%
    \subfloat{{\includegraphics[scale=0.45]{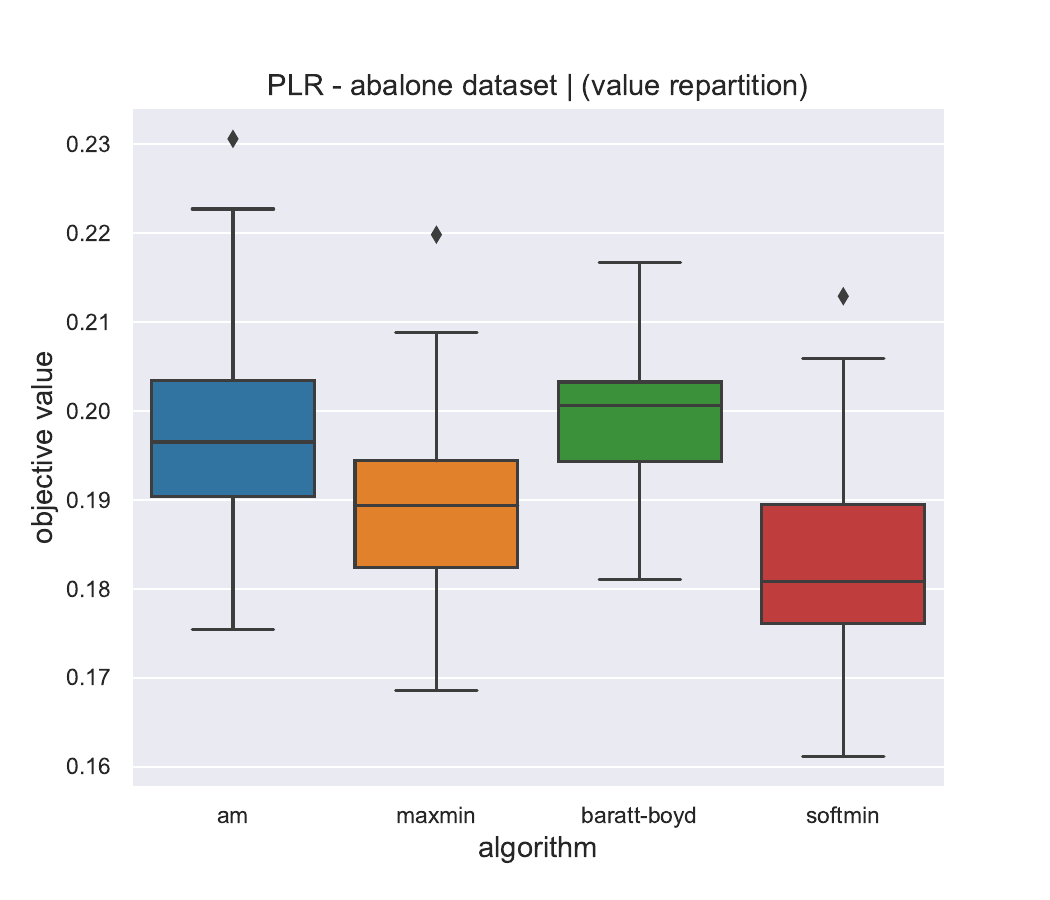} }}%
\vspace{15pt}
\caption{Piecewise-Linear Regression: (time-value) and (value distribution) plots}
\label{fig:plr_experiment3}%
\end{figure}

\subsection{Restricted Facility Location} \label{sec:RFL} We picked up the datasets \href{https://simplemaps.com/data/at-cities}{\underline{austria}}, \href{https://simplemaps.com/data/be-cities}{\underline{belgium}} and \href{https://simplemaps.com/data/cr-cities}{\underline{costarica}} for this facility location task. A dataset is composed of $N$ cities, each of which identified by an index $s \in [N]$ with respective population $P^{(s)} \in \mathbb{N}$ and coordinates $\beta^{(s)} \in \mathbb{R}^2$ (latitude, longitude). 
\vspace{2pt}
 \begin{center}
\begin{tabular}{l|c|c|c}
& austria & belgium & costarica   \\
 \hline
$N$ &$183$ & $53$ & $140$
\end{tabular}
\end{center}
\vspace{5pt}
One plans to build $B = 10$ stores, namely $\{x_{l}\}_{l \in [B]}$, and one distribution center, $x_0 \in \mathbb{R}^2$, so that considering the distance induced by $||\cdot||_1$ on $\mathbb{R}^2$,  \begin{itemize}
\item the averaged distance between populations and their closest store is minimized 
\item the stores are located within a $R_{\text{ref}}>0$ distance from the distribution center 
\end{itemize}
We set a cost by unit of excess distance $\Lambda>0$ to be incurred if one chooses to relax the proximity constraint above. In the spirit of \cite{Hamacher09}, the problem is formulated as follows
\begin{equation}
\min_{x \in \mathcal{X}}\,\Lambda\cdot \max\{R-R_{\text{ref}},0\} + \Bigg(\sum_{\tilde{s}=1}^{N}\,P^{(\tilde{s})}\Bigg)\,^{-1}\,\sum_{s=1}^{N}\,\min_{l\,\in\,[B]}\,P^{(s)}\,||\beta^{(s)}-x_l||_1
\label{eq:raw_facility}
\end{equation}
where the description of the basic feasible set is as follows,
$$\mathcal{X} = \Big\{(R,x_0,x_1,\dots,x_B) \in \mathbb{R}\times \mathbb{R}^2 \times (\mathbb{R}^2)^B\,|\,R>0,\,||x_0-x_l||_1\leq R \hspace{5pt} \forall l \in [B]\Big\}.$$

\paragraph*{Results} Table \ref{tab:rfl_results} displays the results obtained when $\Lambda = 0$, i.e. when we do not penalize the placement of stores outside the $R_{\text{ref}}$ distance window from the distribution center $x_0$. This time, performances are more spread out. Depending on the dataset, a different winner arises, e.g. \texttt{AM} dominated for the case \underline{austria}. Nevertheless, \texttt{DCA} clearly fell short suggesting it is worthwhile to use the structure behind \eqref{eq:big_smc} in \texttt{r-AM} methods. Looking closer to Table \ref{tab:rfl_results}, we point out that for datasets \underline{belgium} and \underline{costarica}, our (\texttt{SM}) and (\texttt{MM}) were again the best in terms of average/median objective value returned. Within these smaller scale problems (compared to (PLR)), (\texttt{BB}) slightly outperforms \texttt{AM}. Initially devised as $n_s=\bar{n}=2$ for every term $s\in [N]$ and beating \texttt{AM} in such case according to the authors of \cite{Baratt20}, it is not totally surprising to observe such behaviour as $\bar{n}$ drops from $30$ (PLR) to $10$ here. \\ \\
Interpreting Table \ref{tab:rfl_results_2} for which we picked $\Lambda = 10$, one draws relatively similar conclusions. The solutions returned by all methods were satisfying the implicit constraint $R\leq R_{\text{ref}}$, meaning that the penalty implemented by parameter $\Lambda$ was prohibitive enough. The median and minimal values obtained by each method among the $150$ trials were bigger for $\Lambda = 10$ than for $\Lambda =0$, as anticipated. A single exception exists when it comes to the performances of \texttt{AM} on \underline{costarica}'s dataset. We noticed that constraints dramatically changed the solutions about \underline{austria} and \underline{belgium} datasets. \\In the case of \underline{austria}, methods performed equally well and their respective performance variability diminished. \\ \\Overall, unlike for (PLR), one can observe that \texttt{AM} provided satisfactory results on (RFL) tasks taking into account its speed. The method with the most variability, (\texttt{BB}), provided the best one-shot solution in the majority of the cases. \newpage 

\begin{table}[h]
\vspace{60pt}
\centering
\begin{tabular}{c||c|c|c|c|c}
dataset &\emph{method} &avg. time [s] & min. value & avg. value & med. value\\
\hline
\multirow{5}{*}{austria} &(\texttt{MM}) & 1.0837 & 0.1279 & 0.3223 &  \textbf{0.2862}\\
& \texttt{AM} & \textbf{0.2821} &  \textbf{0.1008} &  \textbf{0.3078} & 0.3024\\
& \texttt{DCA} & \textcolor{red}{2.7869} & \textcolor{red}{0.1807} & 0.3268 & 0.3241\\
& (\texttt{BB}) &1.3152 & 0.1049 & 0.3280 & 0.3148\\
& (\texttt{SM}) &1.9238 & 0.1755 & \textcolor{red}{0.4144} & \textcolor{red}{0.3275}\\
\hline
\multirow{5}{*}{belgium} &(\texttt{MM}) & 0.6201 & 0.0924 & 0.1992 & 0.1901\\
& \texttt{AM} &   \textbf{0.1787} &  \textbf{0.0781} & 0.2257 & 0.2382\\
& \texttt{DCA} &  \textcolor{red}{3.3046} & \textcolor{red}{0.1131} & \textcolor{red}{0.2333} & \textcolor{red}{0.2438}\\
& (\texttt{BB}) &0.8764 & 0.0795 & 0.2237 & 0.2367\\
& (\texttt{SM}) &1.0632 & 0.0950 &  \textbf{0.1754} &  \textbf{0.1678}\\\hline
\multirow{5}{*}{costarica} &(\texttt{MM}) & 0.9845 & 0.1232 &  \textbf{0.1624} &  \textbf{0.1614}\\
& \texttt{AM} &  \textbf{0.3744} & 0.1242 & 0.1805 & 0.1829\\
& \texttt{DCA} & \textcolor{red}{6.6098} & \textcolor{red}{0.1474} & \textcolor{red}{0.2067} & \textcolor{red}{0.2113}\\
& (\texttt{BB}) & 2.1687 &  \textbf{0.1197} & 0.1699 & 0.1747\\
& (\texttt{SM}) &2.1629 & 0.1259 & 0.1670 & 0.1677\\
\end{tabular}
\vspace{10pt}
\caption{Restricted Facility Location: results for $\Lambda =0$}
\label{tab:rfl_results}
\vspace{50pt}
\end{table}

\begin{table}[H]
\centering
\begin{tabular}{c||c|c|c|c|c}
dataset &\emph{method} &avg. time [s] & min. value & avg. value & med. value\\
\hline
\multirow{5}{*}{austria} &(\texttt{MM}) & 1.0598 & \textbf{0.5463} & \textbf{0.5943} & \textbf{0.6041}\\
& \texttt{AM} & \textbf{0.3171} & 0.5556 & 0.6028 & 0.6054\\
& \texttt{DCA} & \textcolor{red}{4.6697} &  \textcolor{red}{0.5671} &  \textcolor{red}{0.6473} &  \textcolor{red}{0.6094}\\
& (\texttt{BB}) &1.3908 & 0.5548 & 0.5997 & 0.6058\\
& (\texttt{SM}) &1.8707 & 0.5540 & 0.6073 & 0.6079\\
\hline
\multirow{5}{*}{belgium} &(\texttt{MM}) &0.5360 & 0.1301 & 0.2188 & 0.2331\\
& \texttt{AM} & \textbf{0.2018} & 0.1545 & 0.2294 & 0.2357\\
& \texttt{DCA} & \textcolor{red}{3.1733} &  \textcolor{red}{0.1659} &  \textcolor{red}{0.2349} &  \textcolor{red}{0.2439}\\
& (\texttt{BB}) &0.9186 &\textbf{ 0.1197} & 0.2313 & 0.2367\\
& (\texttt{SM}) &0.9724 & 0.1304 & \textbf{0.1894} & \textbf{0.1856}\\\hline
\multirow{5}{*}{costarica} &(\texttt{MM}) &1.0066 & 0.1255 & \textbf{0.1680} & \textbf{0.1694}\\
& \texttt{AM} &  \textbf{0.4917} & 0.1311 & 0.1724 & 0.1746\\
& \texttt{DCA} & \textcolor{red}{7.7209} &  \textcolor{red}{0.1499} &  \textcolor{red}{0.2109} &  \textcolor{red}{0.2137}\\
& (\texttt{BB}) & 2.9921 & \textbf{0.1240} & 0.1710 & 0.1726\\
& (\texttt{SM}) &1.8744 & 0.1267 & 0.1707 & 0.1703\\
\end{tabular}
\vspace{10pt}
\caption{Restricted Facility Location: results for $\Lambda =10$}
\label{tab:rfl_results_2}
\end{table}

\newpage 
\begin{figure}[H]%
    \hspace{-30pt}
    \vspace{-20pt}
    \subfloat{{\includegraphics[scale=0.45]{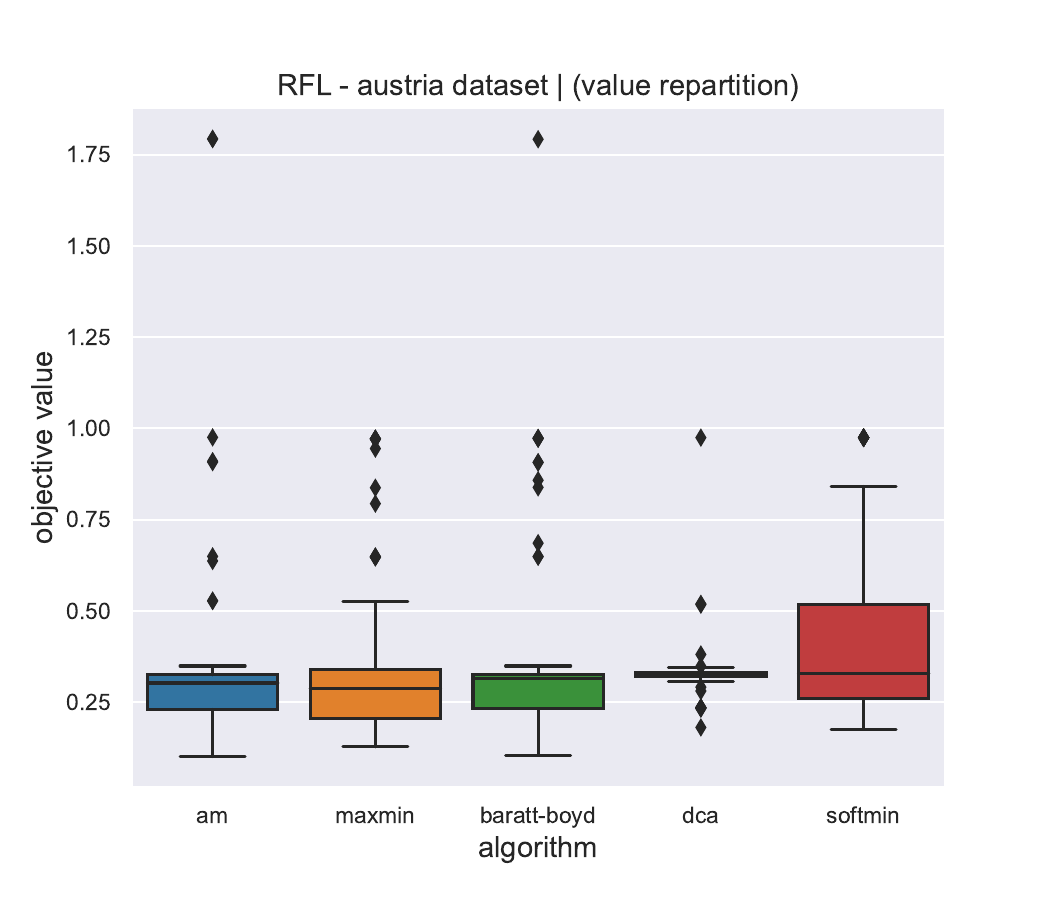} }}%
    \subfloat{{\includegraphics[scale=0.45]{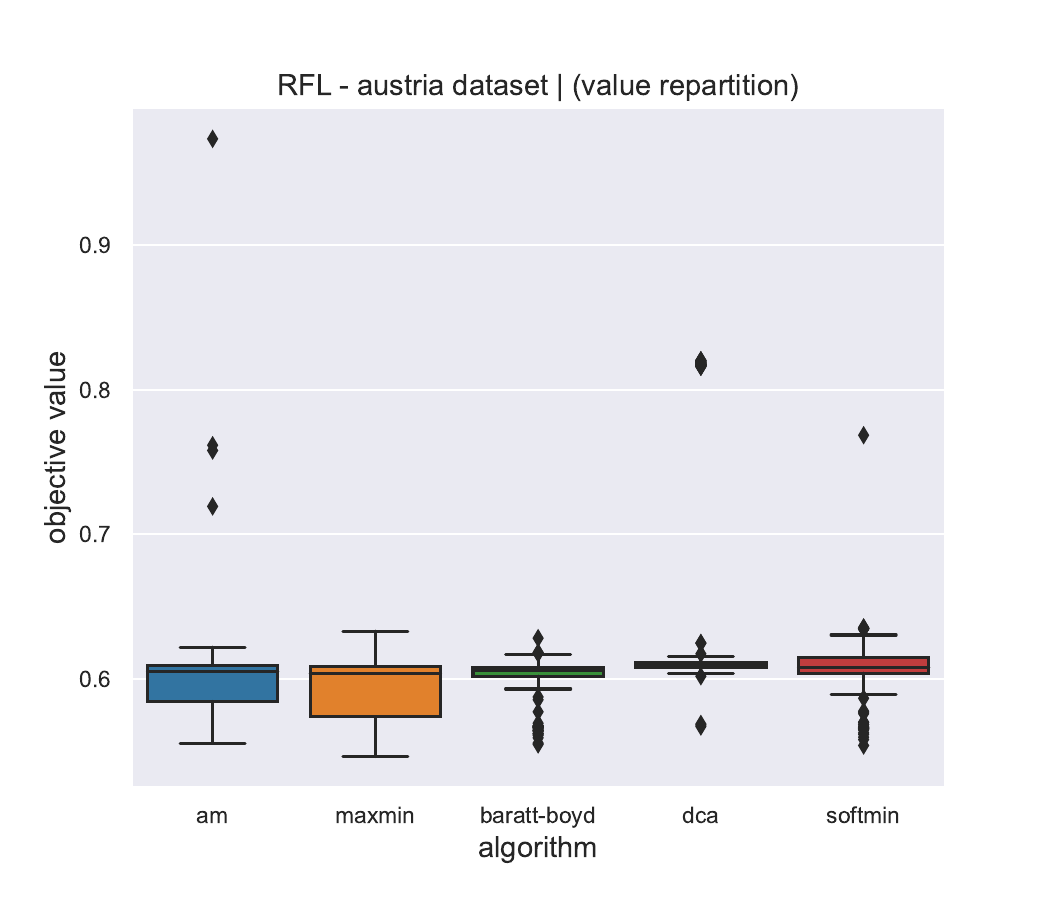} }}%
\label{fig:rfl_experiment}%
\end{figure}
\begin{figure}[H]%
   \hspace{-30pt}
    \vspace{-25pt}
    \subfloat{{\includegraphics[scale=0.45]{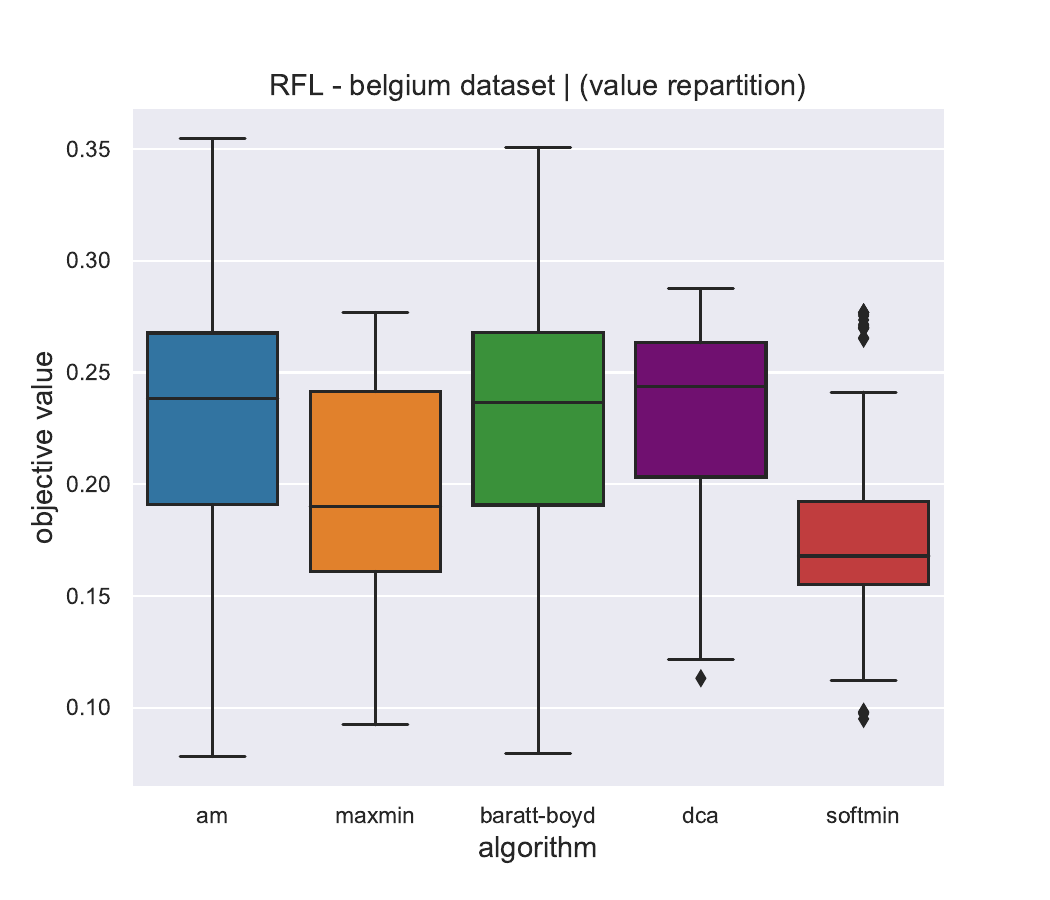} }}%
    \subfloat{{\includegraphics[scale=0.45]{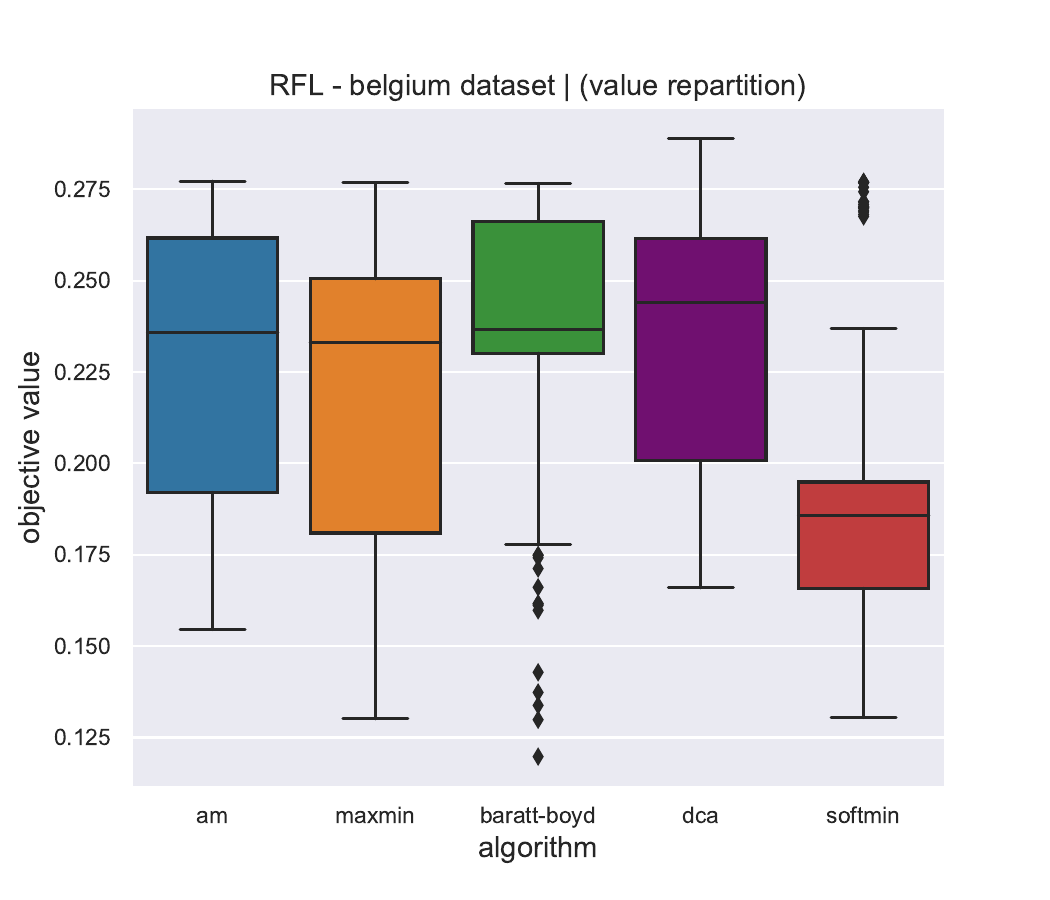} }}%
\label{fig:rfl_experiment2}%
\end{figure}
\begin{figure}[H]%
    \hspace{-30pt}
    \vspace{-25pt}
    \subfloat{{\includegraphics[scale=0.45]{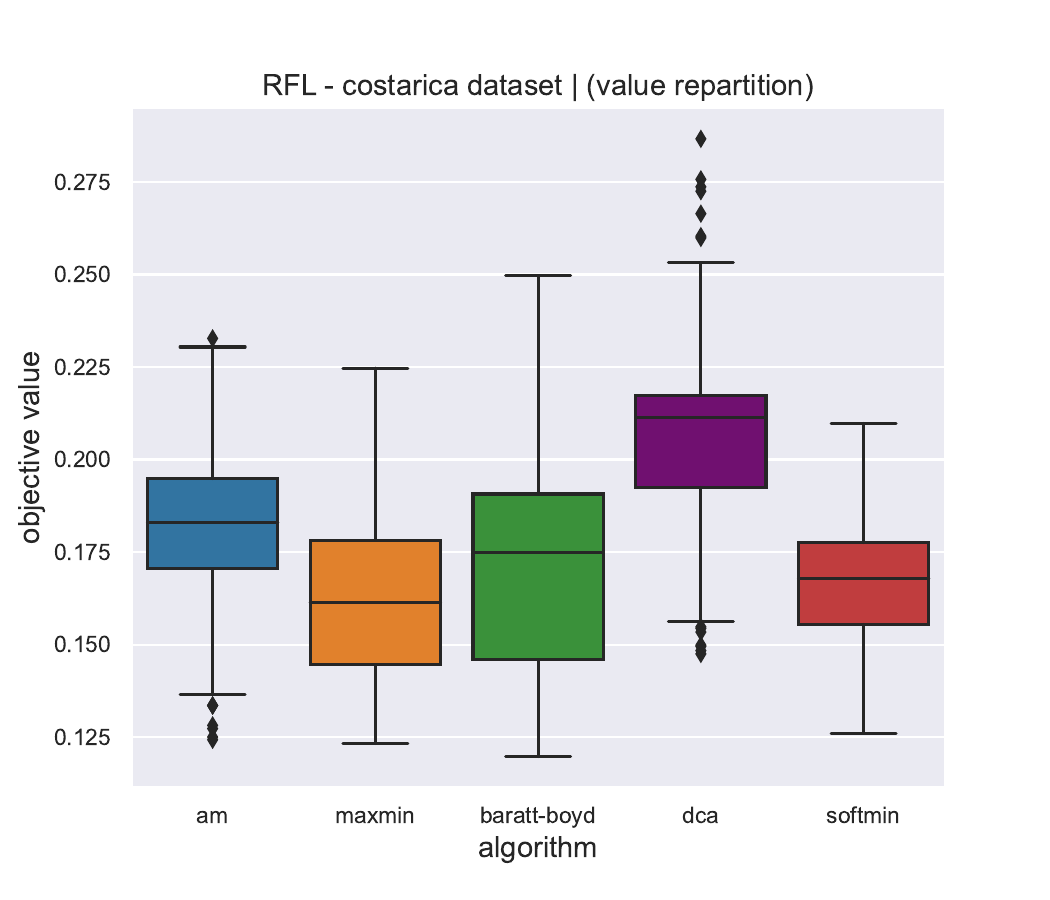} }}%
    \subfloat{{\includegraphics[scale=0.45]{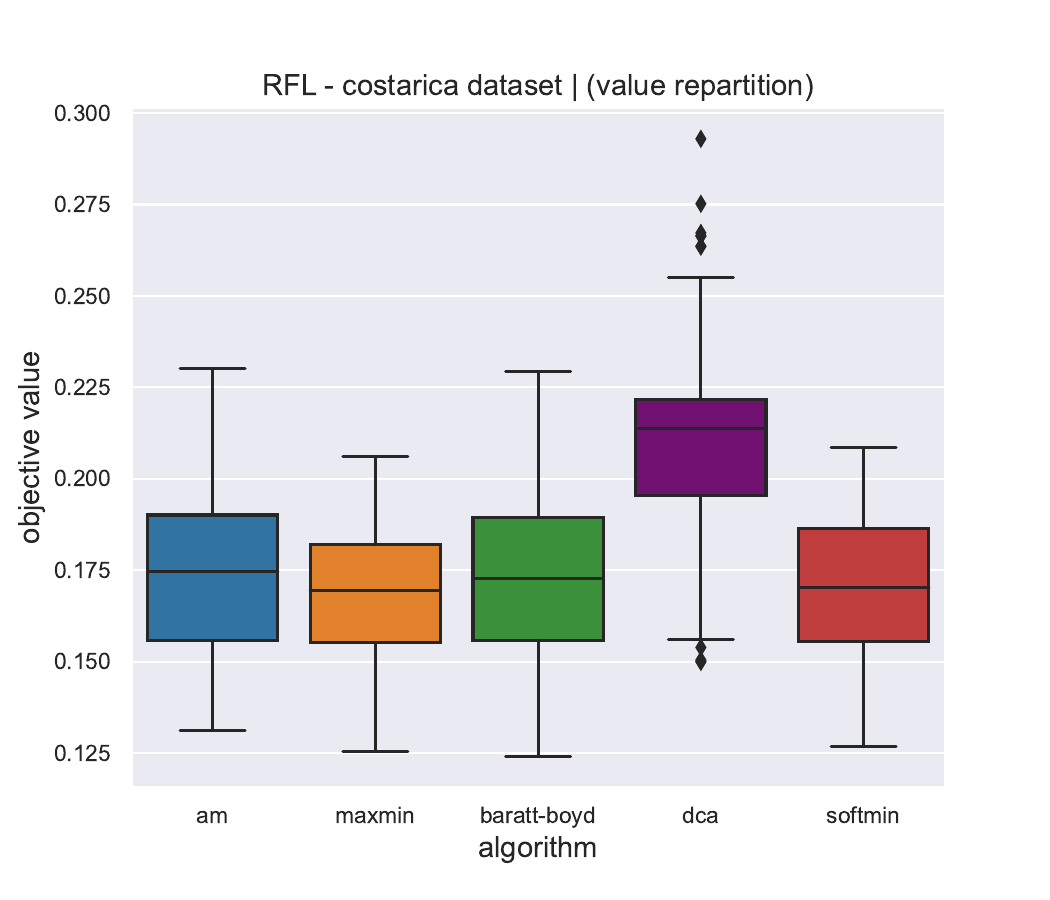} }}%
\vspace{15pt}
\caption{Restricted Facility Location: (value distribution) $\Lambda =0$ (left) and $\Lambda=10$ (right)}
\label{fig:rfl_experiment3}%
\end{figure}

\newpage

\subsection{Local Optimality Certifications} \label{subsec:lo_certif} Let $\hat{x} \in \mathcal{X}$ be the output iterate of a single run of any of the methods tested above. Fortunately enough for both (PLR) and (RFL), we can define a suitable neighbourhood $\hat{\mathcal{S}}$ around $\hat{x}$ so that the computation of $\hat{\mathcal{S}}$-bounds (see Definition \ref{def:s_bounds}) becomes closed-form. Then, according to Remark \ref{rem:choose_s_bounds},  we can try to solve problem \eqref{eq:new_min_problem_local_2} with $\mathcal{S} = \mathcal{X} \cap \hat{\mathcal{S}}$ from Corollary \ref{coro:restart_check} using these local $\hat{\mathcal{S}}$-bounds as the \emph{big-M} values. For numerical stability purposes, we used $\rho$-\emph{active sets} at $\hat{x}$ (see Definition \ref{def:active_set} with $\rho =10^{-12}$) instead of mere \emph{active sets} in formulation \eqref{eq:new_min_problem_local}.\\ \\
We fixed a time limit of $120$ [s] and a sufficient decrease threshold of $\delta_{\text{glob}} = 5 \cdot 10^{-7}$. \\ From there, two possible outcomes may happen: \begin{enumerate}\item the solver outputs a point $x_{\text{new}} \in \mathcal{S}$ such that $F(x_{\text{new}})\leq F(\hat{x})-\delta_{\text{glob}}$ 
\item the solver did not find such point
\end{enumerate}
When $(1)$ occurs, we restart the method with $q_{\text{init}}^{(s)} \in \arg \min_{\tilde{q}\,\in\,\Delta^{n_s}}\,\langle \tilde{q}, \textbf{h}^{(s)}(x_{\text{new}})\rangle$ for every $s\in [N]$, i.e. \texttt{AM} weights. We proceed all over again until the method reaches the next $\hat{x}$ approximate \emph{critical} point. When $(2)$ happens, we are done. Sometimes, it would mean that the solver found the optimal solution $x^*(\hat{x},\mathcal{S})$ of \eqref{eq:new_min_problem_local_2} and based on the value $F(x^*(\hat{x},\mathcal{S}))$, the \emph{local optimality} of $\hat{x}$ might be certified (see Corollary \ref{coro:restart_check}).

\paragraph*{$\hat{\mathcal{S}}$-bounds for (PLR)} Let $R>0$ and let $(||\cdot||,||\cdot||_*)$ denote a norm and its dual defined on $\mathbb{R}^p$.
Looking back to Section \ref{sec:PLR}, let $\hat{x} =  \big(\hat{x}_1^{(1)},\dots,\hat{x}_{B_1}^{(1)},\hat{x}_1^{(2)},\dots,\hat{x}_{B_2}^{(2)}\big)$. \\If one takes
\vspace{-5pt}
\begin{equation}\hat{\mathcal{S}} = \Bigg(\bigtimes_{e_1 \,\in\,[B_1]}\, \mathbb{B}_{||\cdot||}(\hat{x}^{(1)}_{e_1}\, ;\, R) \Bigg)\nonumber \bigtimes \Bigg(\bigtimes_{e_2 \,\in\,[B_2]}\, \mathbb{B}_{||\cdot||}(\hat{x}^{(2)}_{e_2}\, ;\, R)\Bigg), \label{eq:cf_s_plr}\end{equation} 
then valid $\hat{\mathcal{S}}$-bounds for (PLR) are given for for every $(l_+,l)\in[n_s]^2$ ($s \in [N]$) by
\begin{equation} M^{(s)}_{l_+,l} \gets h^{(s)}_{l_{+}}(\hat{x})-h^{(s)}_{l}(\hat{x}) + 2\cdot\Big(\mathbbm{1}_{\{e_1(l_+)\not=e_1(l)\}}+\mathbbm{1}_{\{e_2(l_+)\not=e_2(l)\}}\Big)\cdot\big(R \cdot ||\bar{\beta}^{(s)}||_*\big) \label{eq:big_M_plr} \end{equation}
where $e_1(l)  =  \lceil l/B_2\rceil$ and $e_2(l)  =  1+\big(l-1) \,\textbf{mod}\,B_2$.
\vspace{-3pt}
\paragraph*{$\hat{\mathcal{S}}$-bounds for (RFL)}
Let $R_\infty>0$. Sticking to the notations used in Section \ref{sec:RFL},\\ we can write $\hat{x} = \big(\hat{R},\hat{x}_0,\hat{x}_1,\dots,\hat{x}_B\big)$. 
If one takes
\begin{equation}\hat{\mathcal{S}} = [\hat{R}-R_\infty,\hat{R}+R_\infty] \bigtimes \Bigg(\bigtimes_{l=0}^{B}\,\mathbb{B}_{||\cdot||_\infty}(\hat{x}_{l}\, ;\, R_\infty)\Bigg)\label{eq:cf_s_rfl} 
\vspace{-5pt}
\end{equation}
then valid $\hat{\mathcal{S}}$-bounds for (PLR) are given for for every $(l_+,l)\in[n_s]^2$ ($s \in [N]$) by
\begin{align} M^{(s)}_{l_+,l} &\gets N\cdot P^{(s)} / \bar{P}\,\cdot\Big(||\beta^{(s)}-\hat{x}_{l_+} + R_\infty\cdot\textbf{sign}\big(\beta^{(s)}-\hat{x}_{l_+}\big) ||_1 \nonumber \\
 &- || \beta^{(s)}-\hat{x}_l -\textbf{min}\big(R_\infty\cdot \mathbf{1}_2,\,\textbf{abs}(\beta^{(s)}-\hat{x}_l)\big)\cdot \textbf{sign}\big(\beta^{(s)}-\hat{x}_l\big)||_1\Big)\label{eq:big_M_rfl} \end{align}
 where $\bar{P} = \sum_{s=1}^{N} P^{(s)}$ and $\textbf{sign}$ represents the vectorized version of $\text{sign} : \mathbb{R} \to \{-1,1\}$ defined for every scalar $u$ as 
 $\text{sign}(u) = 1$ if $u\geq 0$ or $\text{sign}(u)=-1$ otherwise.
\newpage 
\noindent As a final contribution, paving the way for further investigations, we tried the procedure described at the beginning of this subsection using only (\texttt{SM}), our most promising local-search technique. We conducted $5$ independent trials represented by $5$ initial weights $Q_{\text{init}}$ for every experiment. For each trial, we recorded the difference between the final best value obtained at the end of a minimization of \eqref{eq:new_min_problem_local_2} with the prescribed $\hat{\mathcal{S}}$-bounds (either \eqref{eq:big_M_plr} for (PLR) or \eqref{eq:big_M_rfl} for (RFL)) and the objective value obtained after the first halt of (\texttt{SM}) (i.e. after the initial \texttt{r-AM} run) within that trial.\\ This difference is a metric of performance and will be called \emph{value enhancement}. Note that if the \emph{value enhancement} is strictly positive, (\texttt{SM}) has been restarted at least once, i.e. it must have restarted after its first halt and the associated encounter of \eqref{eq:new_min_problem_local_2}. \\ \vspace{-5pt}\\
Using $R = 10^{-1}$, and $(||\cdot||, ||\cdot||_*) = (||\cdot||_1, ||\cdot||_\infty)$,  we present in the table below the averaged performances for the (PLR) experiments.

\begin{table}[H]
\centering
\begin{tabular}{c||c|c}
dataset  & avg. \emph{val. enhancement} [$\%$] &  \# \emph{local optimality} certifications \\
\hline
insurance & 4.79& \textcolor{Green}{5/5}\\
\hline
winequality & \textcolor{red}{0}& \textcolor{red}{0/5}\\
\hline
abalone  & 2.79& \textcolor{Blue}{2/5}\\ 
\hline
\end{tabular}
\vspace{10pt}
\label{tab:plr_results_extended}
\vspace{-10pt}
\end{table}
\noindent In what concerns (RFL) experiments, we fixed $R_\infty =2\cdot10^{-1}$. 
\begin{table}[H]
\centering
\begin{tabular}{c||c|c}
dataset  & avg. \emph{val. enhancement} [$\%$] &  \# \emph{local optimality} certifications \\
\hline
austria $(\Lambda =0$)& \textcolor{Green}{8.76}& \textcolor{Green}{5/5}\\
austria $(\Lambda =10$)& \textcolor{red}{0}& \textcolor{Green}{5/5}\\
\hline
belgium $(\Lambda =0$)& 1.03& \textcolor{Green}{5/5}\\
belgium $(\Lambda =10$)& 0.7& \textcolor{Green}{5/5}\\
\hline
costarica $(\Lambda =0$)& 1.52& \textcolor{Green}{5/5}\\
costarica $(\Lambda =10$)& \textcolor{red}{0}& \textcolor{Green}{5/5}\\
\hline
\end{tabular}
\vspace{10pt}
\label{tab:rfl_results_extended}
\vspace{-20pt}
\end{table}
\paragraph*{Results} The ability to certify \emph{local optimality} relies on the radii used for defining the neighbourhood $\hat{\mathcal{S}}$, e.g. $R_\infty$, as well as the capabilities of the (MICP) invoked. Also, it can happen that \texttt{r-AM} methods return points $\hat{x} \in \mathcal{X}$ that are not \emph{local minima}. Arguably, our restart methodology did not provide substantial gains in terms of objective value (i.e. \emph{value enhancement}) but, unexpectedly, managed to certify \emph{local optimality} for many instances, especially in lower-dimensional settings such as in (RFL).
\vspace{-15pt}
\section{Conclusion} 
\label{sec:conclusion}
\vspace{-8pt}
In this paper, we studied a class of nonconvex nonsmooth optimization problems involving the sum of pointwise minima of convex functions. We derived (MICP) reformulations of the original problem as well as new strong \emph{local optimality} conditions.\\ To circumvent the computational difficulty associated with (MICP), we devised a second, simple,  bi-convex reformulation of the problem. This latter allowed us to successfully develop a new framework for local-search methods that can be applied as good heuristics, beating most of the time baseline alternating minimization. On small neighbourhoods, our (MICP) formulation can be alleviated of many binary variables and proved to be useful to certify \emph{local optimality} in certain scenarios. It remains unclear to us how and when to use these reduced scale (MICP) formulations. A possible future research direction is to investigate a smoothing of the objective as a tool to render substantial portions of its landscape benign.
In addition, this would also make the problem amenable for second-order methods, not exploited in this context so far. 
\newpage 
\section*{Funding}

Guillaume Van Dessel is funded by the UCLouvain university as a teaching assistant. 
\vspace{-2pt}
\bibliographystyle{abbrv}
\bibliography{SMC.bib}
\newpage 
\appendix
\addcontentsline{toc}{section}{Appendices}

\section{(SMC) formulation of \emph{Piecewise-Linear $L_1$-Regression}}
    \label{dev_SMC}

As such, \eqref{eq:raw_DC_fitting} is a nonsmooth nonconvex optimization problem. One can actually rewrite \eqref{eq:raw_DC_fitting} in a more convenient way. Let us consider a tuple $(\gamma,\bar{\beta}) \in \mathbb{R} \times \mathbb{R}^p$. 
Now, let $\epsilon = |\gamma-\Delta \ell(\bar{\beta}\, ;\, x)|$. Two scenarios are possible, either $\epsilon = \epsilon_+$ or $\epsilon = \epsilon_{-}$ with
\begin{equation*}
\begin{cases}
\epsilon_{+} = \gamma +\ell_2(\bar{\beta}\,;\,x)-\ell_1(\bar{\beta}\,;\,x) \geq 0, \\
\epsilon_{-} = -\gamma +\ell_1(\bar{\beta}\,;\,x)-\ell_2(\bar{\beta}\,;\,x)  \geq 0. 
\end{cases}
\end{equation*}
We recall from Example \ref{DC_fitting} that $x = (x_1^{(1)},\dots,x_{B_1}^{(1)},x_1^{(2)},\dots,x_{B_2}^{(2)})$ and $$\ell_\star(\cdot\,;\,x) := \max_{e_\star\,\in\,[B_\star]}\,\langle \cdot, x_{e_\star}^{(\star)}\rangle\quad \quad \forall \star \in \{1,2\}.$$ 
It is easy to see that $\epsilon = \max\{0,\epsilon_+\} + \max\{0,\epsilon_{-}\}$, the sum of two polyhedral (DC) functions in $x$ hence itself (DC). Let us rewrite $\max\{0,\epsilon_+\}$ and $\max\{0,\epsilon_-\}$ as follows
\begin{align*}
\max\{0,\epsilon_+\} &=\max\big\{\gamma + \ell_2(\bar{\beta}\,;\,x),\ell_1(\bar{\beta}\,;\,x)\big\} -\ell_1(\bar{\beta}\,;\,x) \\
&=\max\big\{\gamma + \ell_2(\bar{\beta}\,;\,x),\ell_1(\bar{\beta}\,;\,x)\big\} + \min_{e_1\,\in\,[B_1]}\,-\langle \bar{\beta}, x_{e_1}^{(1)}\rangle\\
\max\{0,\epsilon_-\} &=\max\big\{-\gamma + \ell_1(\bar{\beta}\,;\,x),\ell_2(\bar{\beta}\,;\,x)\big\} -\ell_2(\bar{\beta}\,;\,x) \\
&=\max\big\{-\gamma + \ell_1(\bar{\beta}\,;\,x),\ell_2(\bar{\beta}\,;\,x)\big\} + \min_{e_2\,\in\,[B_2]}\,-\langle \bar{\beta}, x_{e_2}^{(2)}\rangle
 \end{align*}
Applying the above decomposition on every term of objective's sum yields
\begin{align*}
   F(x) &=\frac{1}{N}\,\sum_{s=1}^{N}\,\epsilon^{(s)}\\
   &=\frac{1}{N}\,\sum_{s=1}^{N}\, \max\{0,\epsilon_+^{(s)}\} + \max\{0,\epsilon_{-}^{(s)}\}\\
   &= \bar{h}(x) + \frac{1}{N}\,\sum_{s=1}^N\,\min_{(e_1,e_2)\,\in\,[B_1]\times[B_2]}\,-\langle \bar{\beta}^{(s)} , x_{e_1}^{(1)}+ x_{e_2}^{(2)}\rangle \\
   &= \bar{h}(x)+\frac{1}{N}\,\sum_{s=1}^{N}\,\min_{l\,\in\,[\bar{n}]}\,h_l\big(x\,|\,\bar{\beta}^{(s)}\big)
 \end{align*}
 with $\bar{n} = B_1\cdot B_2$, $e_1(l) = \lceil l/B_2\rceil$ and $e_2(l) = 1+(l-1) \,\textbf{mod} \,B_2$ for every $l \in [\bar{n}]$ and
  \begin{align}
  \bar{h}(x) = &\, \frac{1}{N}\,\sum_{s=1}^N 
 \max\Big\{\gamma^{(s)} + \ell_2(\bar{\beta}^{(s)}\,;\,x),\ell_1(\bar{\beta}^{(s)}\,;\,x)\Big\} \nonumber \\&\,+ \max\Big\{-\gamma^{(s)} + \ell_1(\bar{\beta}^{(s)}\,;\,x),\ell_2(\bar{\beta}^{(s)}\,;\,x)\Big\}, \nonumber \\
h_l(x\,|\,\bar{\beta}) =& -\big\langle \bar{\beta}, x^{(1)}_{e_1(l)}+x^{(2)}_{e_2(l)}\big\rangle.  \nonumber
\end{align}
\hspace{300pt} \qed

\newpage 

\section{Criticality revisited} \label{crit_proof}

\begin{proposition}
Let $Q \in \mathcal{Q}$ and $x_+ \in \arg \min_{\tilde{x} \in \mathcal{X}}\,\bar{F}(\tilde{x},Q)$. If there is no gain, i.e. $$\mathcal{G}^*_{(x_+, Q)} := \bar{F}(x_+, Q) - \min_{\tilde{Q}\, \in\, \mathcal{Q}}\, \bar{F}(x_+, \tilde{Q}) =0 $$ then $x_+$ is a critical point for \eqref{eq:min_problem}.
\end{proposition}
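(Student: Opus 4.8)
The plan is to exploit the fact that $x_+$ solves the convex problem $\min_{\tilde x\in\mathcal X}\bar F(\tilde x,Q)$ and to convert its first-order optimality condition into the difference-of-convex criticality condition of Definition \ref{def:cp} attached to the decomposition \eqref{eq:DC_decomposition}. First I would record what the hypothesis $\mathcal G^*_{(x_+,Q)}=0$ actually says. Since $\bar F(x_+,\cdot)$ is linear on each simplex $\Delta^{n_s}$, the inner minimum satisfies $\min_{\tilde Q\in\mathcal Q}\bar F(x_+,\tilde Q)=\bar h(x_+)+\frac1N\sum_s\min_l h_l^{(s)}(x_+)=F(x_+)$, so $\mathcal G^*_{(x_+,Q)}=0$ is equivalent to $\sum_l q_l^{(s)}h_l^{(s)}(x_+)=\min_l h_l^{(s)}(x_+)$ for every $s$; in particular $q_l^{(s)}>0$ forces $l\in\mathcal A^{(s)}(x_+)$, i.e. $Q$ is supported on active components.

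Second, I would write the optimality condition for $x_+$. As $\bar F(\cdot,Q)$ is a weighted sum of the proper closed convex functions $\bar h$ and $h_l^{(s)}$ and $\mathcal X$ is convex, the subdifferential sum rule (under the standard qualification on the overlap of relative interiors of the domains, which I would invoke) yields $\mathbf 0_d\in\partial\bar h(x_+)+\frac1N\sum_{s}\sum_l q_l^{(s)}\partial h_l^{(s)}(x_+)+\mathcal N_{\mathcal X}(x_+)$. Hence there exist representatives $\bar g\in\partial\bar h(x_+)$, $g_l^{(s)}\in\partial h_l^{(s)}(x_+)$ and $\hat v\in\mathcal N_{\mathcal X}(x_+)$ with $\hat v+\bar g+\frac1N\sum_s\sum_l q_l^{(s)}g_l^{(s)}=\mathbf 0_d$. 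The key idea is to reuse exactly these representatives to build one subgradient of $f_1$ and one of $f_2$. For $f_1=\bar h+\frac1N\sum_s\sum_l h_l^{(s)}$ I take $g_1:=\bar g+\frac1N\sum_s\sum_l g_l^{(s)}\in\partial f_1(x_+)$. For $f_2=\frac1N\sum_s\max_{\tilde l}\sum_{l\neq\tilde l}h_l^{(s)}$ I observe that the inner maximum over $\tilde l$ is attained precisely at the indices minimizing $h_{\tilde l}^{(s)}(x_+)$, i.e. at $\tilde l\in\mathcal A^{(s)}(x_+)$; since $Q$ is supported there, the weights $\{q_{\tilde l}^{(s)}\}_{\tilde l}$ constitute an admissible convex combination over the argmax set, so $g_2:=\frac1N\sum_s\sum_{\tilde l}q_{\tilde l}^{(s)}\big(\sum_{l\neq\tilde l}g_l^{(s)}\big)$ lies in $\partial f_2(x_+)$ by the subdifferential rule for a pointwise maximum of convex functions.

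Finally I would carry out the short algebraic check. Using $\sum_{\tilde l}q_{\tilde l}^{(s)}=1$, for each $s$ one has $\sum_{\tilde l}q_{\tilde l}^{(s)}\sum_{l\neq\tilde l}g_l^{(s)}=\sum_l g_l^{(s)}-\sum_{\tilde l}q_{\tilde l}^{(s)}g_{\tilde l}^{(s)}$, whence $g_1-g_2=\bar g+\frac1N\sum_s\sum_l q_l^{(s)}g_l^{(s)}$. Comparing with the stationarity identity gives $\hat v+g_1-g_2=\mathbf 0_d$, which is exactly \eqref{eq:critical_DC}, so $x_+$ is critical. I expect the only delicate points to be (i) justifying the subdifferential sum/max calculus, which is routine given the blanket closed--proper--convex assumptions, and (ii) the bookkeeping that makes the same representatives $g_l^{(s)}$ appear simultaneously in $g_1$, in $g_2$, and in the optimality condition — this coupling is what lets the DC certificate close exactly, and it is the real content of the argument.
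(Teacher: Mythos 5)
Your proposal is correct and follows essentially the same route as the paper's own proof in Appendix~\ref{crit_proof}: support of $Q$ on active components, first-order optimality of $x_+$ for the convex subproblem, and the same choices of $g_1\in\partial f_1(x_+)$ and $g_2\in\partial f_2(x_+)$ built from shared representatives $g_l^{(s)}$. The only difference is that you spell out the justification that $g_2\in\partial f_2(x_+)$ via the max-rule on the argmax set (which coincides with $\mathcal{A}^{(s)}(x_+)$), a step the paper leaves implicit.
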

\begin{proof}
In the absence of gain, it comes $Q \in \arg \min_{\tilde{Q}\,\in\,\mathcal{Q}}\,\bar{F}(x_+,\tilde{Q})$. \\We first notice that for every $s\in [N]$ and $l \in [n_s]$, $q^{(s)}_l>0 \Rightarrow l \in \mathcal{A}^{(s)}(x_+)$. \\Otherwise, there would exist $\bar{s} \in [N]$, $\bar{l} \in [n_s]$ such that $q^{(\bar{s})}_{\bar{l}}>0$ with $\bar{l}$ not in $\mathcal{A}^{(\bar{s})}(\hat{x})$. \\One could then spread the weight of $q^{(\bar{s})}_{\bar{l}}$ on indices of $\mathcal{A}^{(\bar{s})}(\hat{x})$ and achieve a strictly better objective, contradicting the optimality of $Q$. Moreover, first-order necessary optimality conditions for the convex subproblem defining $x_+$ imply the existence of 
$\hat{v}\in\mathcal{N}_{\mathcal{X}}(x_+)$, $\bar{g}\in\partial \bar{h}(x_+)$, $g^{(s)}_l(x_+) \in \partial h^{(s)}_l(x_+)$ for every $s\in[N]$, $l \in [n_s]$ such that
\vspace{-5pt}
$$\textbf{0} = \hat{v} + \bar{g} + \frac{1}{N} \sum_{s=1}^{N}\,\sum_{l \,\in \,\mathcal{A}^{(s)}(x_+)}\,q^{(s)}_l\,g^{(s)}_l=\hat{v}+g_1-g_2\vspace{-7pt}$$
where $g_1 \in \partial f_1(x_+)$ and $g_2 \in \partial f_2(x_+)$ with
\vspace{-5pt}
$$
g_1 = \bar{g} + \frac{1}{N}\,\sum_{s=1}^N\,\sum_{l\, \in\, [n_s]}\,g^{(s)}_l,\hspace{10pt}g_2=\frac{1}{N}\,\sum_{s=1}^N\,\sum_{\tilde{l} \,\in\,\mathcal{A}^{(s)}(x_+)}\,q_{\tilde{l}}^{(s)}\sum_{l\,\in\,[n_s]\backslash\{\tilde{l}\}}\,g^{(s)}_l\vspace{-5pt}$$
\end{proof}

\section{Safeguarded exploration} \label{safeguard_proof}
\begin{lemma}[Exploration Bound]
Let $C \in [0,1]$. Imposing, for every $s \in [N]$,
\begin{equation}
\varepsilon^{(s)} =\min\big\{1,C\cdot \langle q^{(s)}-(q^*)^{(s)},\textbf{h}^{(s)}(x_+)\rangle / \langle \hat{q}^{(s)}_+ - (q^*)^{(s)},\textbf{h}^{(s)}(x_+)\rangle  \big\}\footnote{If $\hat{q}_+^{(s)} = (q^*)^{(s)}$ then $\varepsilon^{(s)}\gets 1$.}
\label{eq:max_safe_exploration_appendix}
\end{equation}
in \eqref{eq:RAM_Q_update} ensures that $ \bar{F}(x_+, Q) -  \bar{F}(x_+, Q_+) \geq (1-C)\cdot \mathcal{G}^*_{(x_+,Q)}$.
\end{lemma}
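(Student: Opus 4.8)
The plan is to exploit the fact that, for fixed $x_+$, the map $\tilde{Q}\mapsto\bar{F}(x_+,\tilde{Q})$ is affine in $\tilde{Q}$ and separates completely across the blocks $s\in[N]$, since $\mathcal{Q}=\bigtimes_{s=1}^N\Delta^{n_s}$ and $\bar{F}(x_+,\tilde{Q})=\bar{h}(x_+)+\tfrac{1}{N}\sum_{s=1}^N\langle\tilde{q}^{(s)},\mathbf{h}^{(s)}(x_+)\rangle$. Consequently the inner minimum $\min_{\tilde{Q}\in\mathcal{Q}}\bar{F}(x_+,\tilde{Q})$ is attained blockwise at $Q^*=((q^*)^{(1)},\dots,(q^*)^{(N)})$, exactly the $(q^*)^{(s)}$ appearing in the update \eqref{eq:RAM_Q_update}. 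First I would record, for every $s\in[N]$, the two scalars
$$a^{(s)}:=\langle q^{(s)}-(q^*)^{(s)},\mathbf{h}^{(s)}(x_+)\rangle,\qquad b^{(s)}:=\langle\hat{q}^{(s)}_+-(q^*)^{(s)},\mathbf{h}^{(s)}(x_+)\rangle,$$
both nonnegative because $(q^*)^{(s)}$ minimizes $\langle\cdot,\mathbf{h}^{(s)}(x_+)\rangle$ over $\Delta^{n_s}$ while $q^{(s)},\hat{q}^{(s)}_+\in\Delta^{n_s}$. In these terms the gain reads $\mathcal{G}^*_{(x_+,Q)}=\tfrac{1}{N}\sum_{s=1}^N a^{(s)}$.

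Next I would compute the achieved decrease. Using linearity again together with the convex-combination definition $q^{(s)}_+=\varepsilon^{(s)}\hat{q}^{(s)}_++(1-\varepsilon^{(s)})(q^*)^{(s)}$, the identity $q^{(s)}-q^{(s)}_+=(q^{(s)}-(q^*)^{(s)})-\varepsilon^{(s)}(\hat{q}^{(s)}_+-(q^*)^{(s)})$ yields
$$\bar{F}(x_+,Q)-\bar{F}(x_+,Q_+)=\frac{1}{N}\sum_{s=1}^N\big(a^{(s)}-\varepsilon^{(s)}b^{(s)}\big).$$
The target inequality $\bar{F}(x_+,Q)-\bar{F}(x_+,Q_+)\ge(1-C)\,\mathcal{G}^*_{(x_+,Q)}$ then follows as soon as I establish the per-block bound $\varepsilon^{(s)}b^{(s)}\le C\,a^{(s)}$ for every $s$, since summing this over $s$ and dividing by $N$ gives $\tfrac{1}{N}\sum_s(a^{(s)}-\varepsilon^{(s)}b^{(s)})\ge(1-C)\tfrac{1}{N}\sum_s a^{(s)}$.

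The per-block bound is verified by inspecting the two branches of the $\min$ defining $\varepsilon^{(s)}$ in \eqref{eq:max_safe_exploration}. When the value $C\,a^{(s)}/b^{(s)}\le 1$ is selected, one has $\varepsilon^{(s)}b^{(s)}=C\,a^{(s)}$ exactly; when instead $\varepsilon^{(s)}=1$ is selected (which happens precisely when $b^{(s)}\le C\,a^{(s)}$), one gets $\varepsilon^{(s)}b^{(s)}=b^{(s)}\le C\,a^{(s)}$. Either way $\varepsilon^{(s)}b^{(s)}\le C\,a^{(s)}$ holds, and summation closes the argument. The only delicate point, and the step I would treat most carefully, is the degenerate case $b^{(s)}=0$ where the ratio in \eqref{eq:max_safe_exploration} is ill-defined: there $\hat{q}^{(s)}_+$ is itself a minimizer of $\langle\cdot,\mathbf{h}^{(s)}(x_+)\rangle$, the footnote convention sets $\varepsilon^{(s)}=1$, and $\varepsilon^{(s)}b^{(s)}=0\le C\,a^{(s)}$ still holds, so the blockwise bound survives. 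Everything else is routine linear bookkeeping resting on the affinity of $\bar{F}(x_+,\cdot)$ and the separability of $\mathcal{Q}$.
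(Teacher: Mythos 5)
Your proposal is correct and follows essentially the same route as the paper's proof: exploit the affinity and blockwise separability of $\bar{F}(x_+,\cdot)$, reduce the claim to the per-block inequality $\varepsilon^{(s)}\langle \hat{q}^{(s)}_+-(q^*)^{(s)},\mathbf{h}^{(s)}(x_+)\rangle \le C\,\langle q^{(s)}-(q^*)^{(s)},\mathbf{h}^{(s)}(x_+)\rangle$, and verify it on the two branches of the $\min$. Your explicit handling of the degenerate case $\langle \hat{q}^{(s)}_+-(q^*)^{(s)},\mathbf{h}^{(s)}(x_+)\rangle=0$ matches the footnote convention in the paper and is a welcome clarification, but it does not constitute a different argument.
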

\begin{proof}
Let $Q_+(\epsilon) = (q^{(1)}_+(\varepsilon^{(1)}),\dots,q^{(N)}_+(\varepsilon^{(N)}))$ with, for every $s\in[N]$, $$q_+^{(s)}(\varepsilon^{(s)}) = \varepsilon^{(s)}\cdot\hat{q}^{(s)}_+ + (1-\varepsilon^{(s)})\cdot (q^*)^{(s)} $$ as suggested by \eqref{eq:RAM_Q_update}. We start by recalling that \begin{align*}
\mathcal{G}^*_{(x_+,Q)} &= \frac{1}{N}\,\sum_{s=1}^N\,\langle q^{(s)}-(q^*)^{(s)},\textbf{h}^{(s)}(x_+)\rangle\\
\bar{F}(x_+,Q)-\bar{F}(x_+,Q_+) &= \frac{1}{N}\,\sum_{s=1}^N\,\langle q^{(s)}-q_+^{(s)},\textbf{h}^{(s)}(x_+)\rangle
\end{align*}Based upon this decomposition, if for every $s\in [N]$, we show that 
\begin{equation} \langle q^{(s)}-q_+^{(s)},\textbf{h}^{(s)}(x_+)\rangle \geq (1-C)\cdot \langle q^{(s)}-(q^*)^{(s)},\textbf{h}^{(s)}(x_+)\rangle \label{eq:to_prove_seq} \end{equation}
then the proof is completed. We need to consider two cases. First, let $\varepsilon^{(s)}=1$. \\In such circumstances, $q_+^{(s)} = \hat{q}_+^{(s)}$ and according to \eqref{eq:max_safe_exploration_appendix},
$$  \langle \hat{q}^{(s)}_+ - (q^*)^{(s)},\textbf{h}^{(s)}(x_+)\rangle =   \langle q^{(s)}_+ - (q^*)^{(s)},\textbf{h}^{(s)}(x_+)\rangle \leq C\cdot \langle q^{(s)}-(q^*)^{(s)},\textbf{h}^{(s)}(x_+)\rangle $$
Multiplying the above inequality by $-1$ and adding $\langle q^{(s)}-(q^*)^{(s)}, \textbf{h}^{(s)}(x_+)\rangle$ on both sides yields \eqref{eq:to_prove_seq}. Now, let $\varepsilon^{(s)}<1$. We notice that $\varepsilon^{(s)}\geq 0$ by definition of $(q^*)^{(s)}$.
\begin{align*}
\langle q_+^{(s)}(\varepsilon^{(s)}),\textbf{h}^{(s)}(x_+)\rangle &=  \varepsilon^{(s)}\cdot \langle \hat{q}^{(s)}_+,\textbf{h}^{(s)}(x_+)\rangle + (1-\varepsilon^{(s)}) \cdot \langle (q^*)^{(s)},\textbf{h}^{(s)}(x_+)\rangle \\
&= \varepsilon^{(s)}\cdot \langle \hat{q}^{(s)}_+ - (q^*)^{(s)},\textbf{h}^{(s)}(x_+)\rangle + \langle (q^*)^{(s)},\textbf{h}^{(s)}(x_+)\rangle\\
&= C\cdot \langle q^{(s)}-(q^*)^{(s)},\textbf{h}^{(s)}(x_+)\rangle+ \langle (q^*)^{(s)},\textbf{h}^{(s)}(x_+)\rangle
\end{align*}
multiplying both sides by $-1$ and adding $\langle q^{(s)},\textbf{h}^{(s)}(x_+)\rangle$ on both sides yields \eqref{eq:to_prove_seq}.
\end{proof}

\end{document}